\renewcommand{\le}{\leqslant}
\renewcommand{\ge}{\geqslant}
\newtheorem{teo}{Theorem}[section]
\newtheorem{lem}[teo]{Lemma}
\newtheorem{prop}[teo]{Proposition}
\newtheorem{cor}[teo]{Corollary}
\theoremstyle{definition}
\newtheorem{dfn}[teo]{Definition}
\newtheorem{rk}[teo]{Remark}
\newtheorem{ex}[teo]{Example}
\def\<{\langle}
\def\>{\rangle}
\def\r{\rho}
\def\f{{\varphi}}
\def\C{{\mathbb C}}
\def\R{{\mathbb R}}
\def\Z{{\mathbb Z}}
\def\Ker{\mathop{\rm Ker}\nolimits}
\def\Id{\operatorname{Id}}
\def\Tr{\operatorname{Tr}}
\def\Spec{\operatorname{Spectr}}
\def\ind{\operatorname{ind}}
\def\1{\mathbf 1}
\newcommand{\coker}{{\rm Coker\ }}
\newcommand{\im}{{\rm Im\ }}
\newcommand{\wh}[1]{\widehat{#1}}
\def\Fix{\operatorname{Fix}}
\def\cR{{\mathcal R}}
\def\N{{\mathbb N}}
\def\divides{{\mathchoice{\mathrel{\bigm|}}{\mathrel{\bigm|}}{\mathrel{|}}{\mathrel{|}}}}
\def\notdivides{\mathrel{\kern-3pt\not\!\kern3.5pt\bigm|}}
\def\smallnotdivides{\mathrel{\kern-2pt\not\!\kern3.5pt\vert}}
\def\Divides{\divides\!\divides}
\begin{document}

\title[Dynamical zeta functions and representations spaces]
{Dynamical zeta functions of Reidemeister type and representations spaces}

\author{Alexander Fel'shtyn}
\thanks{The work 
is funded by the  Narodowe Centrum Nauki of Poland (NCN) (grant No.~\!2016/23/G/ST1/04280 (Beethoven 2)).}
\address{Instytut Matematyki, Uniwersytet Szczecinski,
ul. Wielkopolska 15, 70-451 Szczecin, Poland} \email{fels@wmf.univ.szczecin.pl, felshtyn@gmail.com}
\author{Malwina Zietek}
\address{Instytut Matematyki, Uniwersytet Szczecinski,
ul. Wielkopolska 15, 70-451 Szczecin, Poland} \email{malwina.zietek@gmail.com}

\dedicatory{Dedicated to the memory of Sergiy  Kolyada}

\subjclass[2000]{Primary 37C25; 37C30; 22D10;  Secondary 20E45; 54H20; 55M20}
\date{XXX 1, 2014 and, in revised form, YYY 22, 2015.}

\keywords{ Twisted conjugacy class, Reidemeister number, Reidemeister zeta function,  unitary dual}

\begin{abstract}

In this paper we continue to study the Reidemeister zeta function.
We prove P\'olya -- Carlson dichotomy between rationality and a natural boundary for analytic behavior  of the Reidemeister zeta function for a  large class of automorphisms of  Abelian groups.
 We also study dynamical representation theory zeta functions counting
numbers of fixed  irreducible representations for iterations of an endomorphism.
The rationality and functional equation for these zeta functions are proven for several classes of groups. We  find a connection between these zeta
functions and the Reidemeister torsions of the corresponding mapping tori.
We also establish   the connection between the Reidemeister  zeta function and  dynamical representation theory zeta functions  under restriction of endomorphism to a subgroup and to a quotient group.

\end{abstract}

\maketitle
\section{Introduction}

Let $G$ be a countable discrete group and $\phi: G\rightarrow G$ an endomorphism.
Two elements $\alpha,\beta\in G$ are said to be
$\phi$-{\em conjugate} or {\em twisted conjugate,} iff there exists $g \in G$ with
$\beta=g  \alpha  \phi(g^{-1}).$
We shall write $\{x\}_\phi$ for the $\phi$-{\em conjugacy} or
{\em twisted conjugacy} class
of the element $x\in G$.
The number of $\phi$-conjugacy classes is called the {\em Reidemeister number}
of an  endomorphism $\phi$ and is  denoted by $R(\phi)$.
If $\phi$ is the identity map then the $\phi$-conjugacy classes are the usual
conjugacy classes in the group $G$.
Taking a dynamical point of view, we consider the iterates of  $\phi$, and we may define \cite{Fel91}   a Reidemeister zeta function of  $\phi$ as a power series:
\begin{align*}
R_\phi(z)&=\exp\left(\sum_{n=1}^\infty \frac{R(\phi^n)}{n}z^n\right).
\end{align*}
Whenever we mention the Reidemeister zeta function $R_\phi(z)$, we shall assume that it is well-defined and so  $R(\phi^n)<\infty$ for all $n>0$.
The following problem was investigated \cite{fh}:  for which groups and endomorphisms is the Reidemeister zeta function a rational function? Is this zeta function an algebraic function?

When a Reidemeister zeta function is a rational function  the infinite sequence of coefficients of the corresponding power series is closely interconnected, and is given by the finite set of zeros and poles of the zeta function.
In \cite{Fel91, fh, Li, fhw, Fel00}, the rationality of the Reidemeister zeta function $R_\phi(z)$ was proven in the following cases: the group is finitely generated and an endomorphism is eventually commutative; the group is finite; the group is a direct sum of a finite group and a finitely generated free Abelian group; the group is finitely generated, nilpotent and torsion free.  Recently, the rationality and functional equation for the Reidemeister zeta function were  proven for endomorphisms of fundamental groups of infra-nilmanifolds \cite{DeDu}
and for endomorphisms of fundamental groups of infra-solvmanifolds of type (R) \cite{FelLee}.

In this paper we continue to study the Reidemeister zeta function.
We prove P\'olya -- Carlson dichotomy between rationality and a natural boundary for analytic behavior  of Reidemeister zeta function for a  large class of automorphisms of  Abelian groups.

 We continue to study dynamical representation theory zeta functions (see \cite{FTZ}) counting
numbers of fixed  irreducible unitary representations for iterates of an endomorphism.
The rationality and functional equation for these zeta functions are proven for several classes of groups.
We find a connection between these  zeta functions and the Reidemeister torsions of the corresponding  mapping tori.

We establish the connection
  between  Reidemeister zeta function and  dynamical representation theory zeta functions  under restriction of endomorphism to a subgroup and to a quotient group. 

Our method is to identify the Reidemeister numbers with  the number of fixed points of the induced
map $\hat\phi$ (respectively, its iterations)
of an appropriate subspace of the unitary dual $\wh G$,
when $R(\f)<\infty$.
This method is called the twisted Burnside--Frobenius
theory (TBFT), because in the case of a finite group and
identity automorphism we arrive to the classical Burnside--Frobenius
theorem on enumerating of (usual) conjugacy classes via irreducible unitary representations.

Let us present the contents of the paper in more details.

In Section 2  the rationality and functional equation for 
dynamical representation theory zeta functions are proven for 
endomorphisms of finitely generated Abelian  groups; endomorphisms of finitely generated torsion free nilpotent groups; endomorphisms of  groups with finite $\phi$-irreducible subspaces of corresponding unitary dual spaces and for automorphisms of crystallographic groups with diagonal holonomy $Z_2$.
For a periodic  automorphism  of a group we have proved a product formula for dynamical representation theory zeta functions which implies that these zeta functions are radicals of rational functions.

In Section 3 we investigate the rationality of these zeta functions  and the connections  between  Reidemeister zeta function and  dynamical representation theory zeta functions  under restriction of endomorphism to a subgroup and to a quotient group.
We also  prove  the Gauss congruences for the Reidemeister numbers of iterations of endomorphism for  a 
group with polycyclic quotient group.

In Section 4 we presents results in support of a  P\'olya -- Carlson dichotomy between rationality and a natural boundary for analytic behavior  of Reidemeister zeta function for a  large class of automorphisms of  Abelian groups.

\noindent
\textbf{Acknowledgments.}
This work was supported by the grant Beethoven 2 of the Narodowe Centrum Nauki of Poland(NCN),
grant No. 2016/23/G/ST1/04280.

%The authors are grateful to Evgenij Troitsky  and Tom Ward for helpful comments and
%valuable discussions. 

\section{Dynamical zeta functions and representations spaces}
Suppose, $\phi$ is an endomorphism of a discrete group $G$.
\rm
Denote by $\wh G$ the \emph{unitary dual} of $G$, i.e.
the space of equivalence classes of
unitary irreducible representations of $G$, equipped with the
\emph{hull-kernel} topology, denote by
$\wh G_f$ the subspace of the unitary dual formed by
irreducible finite-dimensional representations,
and by $\wh G_{ff}$ the subspace of $\wh G_{f}$ formed by
 \emph{finite} representations, i.e. representations
 that factorize through a finite group.
Generally the correspondence $\widehat{\phi}:\rho\mapsto \rho\circ\phi$
does not define a dynamical system (an action of the semigroup
of positive integers) on the unitary dual $\widehat{G}$    
or its finite-dimensional part $\widehat{G}_f$, or finite part
$\widehat{G}_{ff}$, because in contrast with the
authomorphism case, the representation $\rho\circ\phi$
may be reducible, so it is only possible to decompose $\rho\circ\phi$ into irreducible
components and we obtain a sort of multivalued map $\widehat{\phi}$.

Nevertheless we can consider representations $\rho$ such that
$\rho \sim \rho\circ\phi$ and proceed as follows.

\begin{dfn}\label{dfn:represreidnum}
{\rm
A \emph{representation theory Reidemeister number}
$RT(\phi)$
is defined \cite{FTZ} as  the number of all $[\rho]\in \widehat{G}$ such that
$\rho \sim \rho\circ\phi$. Taking $[\rho]\in \widehat{G}_f$
(respectively $[\rho]\in \widehat{G}_{ff}$) we obtain
$RT^f(\phi)$ (respectively $RT^{ff}(\phi)$). Evidently
$RT(\phi)\ge RT^f(\phi)\ge RT^{ff}(\phi)$.
}
\end{dfn}

In analogy with the Reidemeister zeta function and other similar objects we have defined  in \cite{FTZ}  jointly with E.Troitsky
 following dynamical  representation zeta functions 
\begin{align*}
RT_\phi(z)&=\exp\left(\sum_{n=1}^\infty \frac{RT(\phi^n)}{n}z^n\right),\\
RT^f_\phi(z)&=\exp\left(\sum_{n=1}^\infty \frac{RT^f(\phi^n)}{n}z^n\right),\\
RT^{ff}_\phi(z)&=\exp\left(\sum_{n=1}^\infty \frac{RT^{ff}(\phi^n)}{n}z^n\right),\\
\end{align*}
when  numbers $ RT(\phi^n)$
(resp, $ RT^f(\phi^n)$  , or $ RT^{ff}(\phi^n)$) are all finite.

The importance of these numbers is justified by the
following dynamical interpretation.
In \cite{RJMP} the following
``dynamical part'' of the dual space, where $\widehat{\phi}$ and all
its iterations $\widehat{\phi}^n$  define a dynamical system,
was defined. 

\begin{dfn}
{\rm
Following \cite{RJMP} a class $[\rho]$ is called
a $\wh\phi$-\textbf{f}-point,
if $\rho\sim \rho\circ\phi$ (so, these are the points
under consideration in the  Definition \ref{dfn:represreidnum}).
}
\end{dfn}

\begin{dfn}
{\rm
Following \cite{RJMP} an element $[\rho]\in \wh{G}$ (respectively, in $\wh{G}_f$
or $\wh{G}_{ff}$)
is called $\phi$-\emph{irreducible} if $\r\circ \phi^n$ is
irreducible for any $n=0,1,2,\dots$.

Denote the corresponding subspaces of $\wh{G}$ (resp., $\wh{G}_f$
or $\wh{G}_{ff}$)
by $\wh{G}^\phi$ (resp., $\wh{G}^\phi_f$ or $\wh{G}^\phi_{ff}$). 
}
\end{dfn}

\begin{lem}[Lemma 2.4 in \cite{RJMP}]\label{lem:keylem}
Suppose the representations $\rho$ and $\rho\circ\phi^n$ are
equivalent  for some $n\geq 1$. Then $[\rho]\in \wh G^\phi$.
\end{lem}

\begin{cor}[Corollary 2.5 in \cite{RJMP}]\label{cor:periodicanddyn}
Generally, there is no dynamical system defined by $\widehat{\phi}$
on $\wh G$ (resp., $\wh G_f$, or $\wh{G}_{ff}$).
We have only the well-defined notion of a
$\wh\phi^n$-\textbf{f}-point.

A well-defined dynamical system exists on 
$\wh G^\phi$ (resp., $\wh G^\phi_f$, or $\wh{G}^\phi_{ff}$). 
Its $n$-periodic points are exactly $\wh\phi^n$-\textbf{f}-points.
\end{cor}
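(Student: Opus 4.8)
The plan is to prove Corollary~\ref{cor:periodicanddyn} as a direct consequence of Lemma~\ref{lem:keylem}, which is the real content; the corollary merely repackages that lemma in dynamical language, so the proof should be short.

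First I would justify the negative statements. The assertion that there is in general no dynamical system defined by $\widehat{\phi}$ on $\wh G$ (or $\wh G_f$, $\wh G_{ff}$) is already explained in the text preceding Definition~\ref{dfn:represreidnum}: for a general endomorphism $\phi$ the representation $\rho\circ\phi$ need not be irreducible, so $\rho\mapsto\rho\circ\phi$ is only a multivalued assignment on the dual and does not restrict to a self-map of the space of (classes of) irreducible representations. Hence one cannot speak of iterating $\widehat\phi$ there, only of the well-defined \emph{property} of being a $\wh\phi^n$-\textbf{f}-point, i.e. of satisfying $\rho\sim\rho\circ\phi^n$; I would simply point to this discussion and, if desired, recall a standard example (an endomorphism of a group with $\rho\circ\phi$ reducible) to make it concrete.

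Next, the positive part. On $\wh G^\phi$ (resp.\ $\wh G^\phi_f$, $\wh G^\phi_{ff}$), by the very definition of $\phi$-irreducibility, $\rho\circ\phi^n$ is irreducible for every $n\ge 0$, so $\widehat\phi\colon[\rho]\mapsto[\rho\circ\phi]$ is a genuine (single-valued) map into the irreducible dual; I must also check it maps $\wh G^\phi$ into itself, which is immediate since $(\rho\circ\phi)\circ\phi^n=\rho\circ\phi^{n+1}$ is again irreducible for all $n$, and similarly it preserves finite-dimensionality and the property of factoring through a finite group, so the finite-dimensional and finite versions are also invariant. Thus $\widehat\phi$ together with its iterates $\widehat\phi^{\,n}$ gives an action of the semigroup of positive integers, i.e.\ a dynamical system, on each of these subspaces.

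Finally, I would identify the $n$-periodic points. A point $[\rho]\in\wh G^\phi$ is $n$-periodic for $\widehat\phi$ iff $\widehat\phi^{\,n}[\rho]=[\rho]$, i.e.\ $\rho\circ\phi^n\sim\rho$, which is exactly the defining condition of a $\wh\phi^n$-\textbf{f}-point. Conversely, if $[\rho]\in\wh G$ satisfies $\rho\sim\rho\circ\phi^n$ for some $n\ge 1$, then Lemma~\ref{lem:keylem} gives $[\rho]\in\wh G^\phi$, so every $\wh\phi^n$-\textbf{f}-point actually lies in the subspace carrying the dynamical system and is there an $n$-periodic point. This closes the equivalence. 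I do not anticipate a serious obstacle: the only subtlety is making sure that ``periodic point'' is interpreted as satisfying $\widehat\phi^{\,n}[\rho]=[\rho]$ exactly rather than merely eventually, and that the well-definedness and invariance of $\widehat\phi$ on the finite-dimensional and finite layers are spelled out; everything else is a restatement of Lemma~\ref{lem:keylem}.
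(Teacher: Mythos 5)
Your proof is correct and is essentially the natural deduction from Lemma~\ref{lem:keylem} that the paper intends; the paper itself gives no argument for this corollary, simply deferring to \cite{RJMP}, and your reconstruction (well-definedness of $\widehat\phi$ on $\wh G^\phi$ by invariance under composition with $\phi$, invariance of the finite-dimensional and finite layers, and the two-way identification of $n$-periodic points with $\wh\phi^n$-\textbf{f}-points via Lemma~\ref{lem:keylem}) is exactly what the reference supplies.
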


We refer to \cite{RJMP} for proofs and
details.

Once we have identified
the coefficients of representation theory zeta functions 
with the
numbers of periodic points of a dynamical system, 
the standard argument with the
M\"obius inversion formula
(see e.g. \cite[p.~104]{Fel00}, \cite{RJMP}) 
gives the following statement.

\begin{teo} (Theorem 2.7 of \cite{FTZ})\label{teo:congrue_rep_reide}
Suppose , $RT(\phi^n)<\infty$ for any $n$.
Then we have the following Gauss congruences
for representation theory Reidemeister numbers:
 $$
 \sum_{d\mid n} \mu(d)\cdot RT(\phi^{n/d}) \equiv 0 \mod n
 $$
for any $n$.

A similar statement is true for $RT^f(\phi^n)$ and 
$RT^{ff}(\phi^n)$.
\end{teo}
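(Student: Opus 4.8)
The plan is to derive the Gauss congruences as a purely formal consequence of the dynamical interpretation supplied by Corollary~\ref{cor:periodicanddyn}. Fix $n$ and work on the set $\wh G^\phi$, on which $\wh\phi$ generates a genuine action of $\N$ by Corollary~\ref{cor:periodicanddyn}. By Lemma~\ref{lem:keylem}, every $\wh\phi^k$-\textbf{f}-point already lies in $\wh G^\phi$, so for each $k$ the number $RT(\phi^k)$ equals the number of $k$-periodic points of the map $f:=\wh\phi$ acting on $\wh G^\phi$; this is the key translation that converts the representation-theoretic quantities into honest periodic-point counts $P_k(f):=\#\Fix(f^k)$.

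Next I would invoke the standard combinatorial lemma: if a set carries an action of a single map $f$ and all the counts $P_k(f)=\#\Fix(f^k)$ are finite, then $\Fix(f^n)$ decomposes into disjoint periodic orbits whose lengths $d$ divide $n$, and counting points orbit-by-orbit gives $P_n(f)=\sum_{d\mid n} d\cdot O_d(f)$, where $O_d(f)$ is the (finite) number of orbits of exact period $d$. Equivalently $O_n(f)=\frac1n\sum_{d\mid n}\mu(d)P_{n/d}(f)$ by M\"obius inversion, and since $O_n(f)$ is a nonnegative integer this yields
\[
\sum_{d\mid n}\mu(d)\,P_{n/d}(f)\equiv 0 \pmod n .
\]
Substituting $P_k(f)=RT(\phi^k)$ gives the claimed congruence for $RT$. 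For $RT^f$ and $RT^{ff}$ the identical argument applies verbatim on the invariant subspaces $\wh G^\phi_f$ and $\wh G^\phi_{ff}$, using the corresponding versions of Lemma~\ref{lem:keylem} and Corollary~\ref{cor:periodicanddyn}; one only needs that these subspaces are preserved by $\wh\phi$, which is immediate from the definition of $\phi$-irreducibility (if $\rho\circ\phi^m$ is irreducible for all $m\ge0$, so is $(\rho\circ\phi)\circ\phi^m$, and finite-dimensionality or finiteness of the image is likewise inherited).

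Since Corollary~\ref{cor:periodicanddyn} and Lemma~\ref{lem:keylem} are already available and the M\"obius/orbit-counting step is the classical one (see the references to \cite[p.~104]{Fel00} and \cite{RJMP}), there is no serious obstacle here; the statement is essentially a corollary. The only point requiring a word of care is the finiteness hypothesis: one must note that the assumption $RT(\phi^n)<\infty$ for all $n$ guarantees that every $\Fix(f^k)$ with $k\mid n$ is finite, so that the orbit decomposition of $\Fix(f^n)$ is a \emph{finite} disjoint union and the counting identity is legitimate. With that observed, the proof is complete.
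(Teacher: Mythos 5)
Your proof is correct and follows essentially the same route the paper takes: identify $RT(\phi^k)$ with the number of $k$-periodic points of the genuine dynamical system $\wh\phi$ on $\wh G^\phi$ via Lemma~\ref{lem:keylem} and Corollary~\ref{cor:periodicanddyn}, then apply the classical orbit-decomposition/M\"obius-inversion argument. The only cosmetic difference is that you count orbits $O_d$ of exact period $d$ rather than points $P_d$ of least period $d$ (with $P_d = d\cdot O_d$), which amounts to the same congruence.
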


Here the above \emph{M\"obius function} is defined as
$$
\mu(d) =
\left\{
\begin{array}{ll}
1 & {\rm if}\ d=1,  \\
(-1)^k & {\rm if}\ d\ {\rm is\ a\ product\ of}\ k\ {\rm distinct\ primes,}\\
0 & {\rm if}\ d\ {\rm is\ not\ square-free.}
\end{array}
\right.
$$

\begin{dfn}\label{dfn:tbft}
Following \cite{RJMP} we say that TBFT (resp., TBF$T_f$, TBF$T_{ff}$) takes place for
an endomorphism $\phi:G\to G$ and its iterations, if
$R(\phi^n)<\infty$ and $R(\phi^n)$ coincides with the
number of $\wh\phi^n$-\textbf{f}-points in $\wh G$
(resp., in $\wh G_f$, $\wh G_{ff}$)
for all $n\in \mathbb{N}$.

Similarly, one can give a definition for a single
endomorphism (without iterations).
\end{dfn}

The following statement  follows from the 
definitions.

\begin{prop}(Proposition 2.8 of \cite{FTZ}) \label{prop:tbftimplcoin}
Suppose, $\phi:G\to G$ is an endomorphism and $R(\phi)<\infty$.
If TBFT $($resp., TBFT$_f)$ is true for $G$ and $\phi$,
then $R(\phi)=RT(\phi)$ $($resp., $R(\phi)=RT^f(\phi)=RT^{ff}(\phi))$.

If the suppositions hold for $\phi^n$, for any $n$, then
$R_\phi(z)=RT_\phi(z)$ $($resp., 
$R_\phi(z)=RT^f_\phi(z)=RT^{ff}_\phi(z))$.
\end{prop}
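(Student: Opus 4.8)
The plan is to unwind the definitions directly. The statement TBFT (resp. TBFT$_f$) for $G$ and $\phi$ asserts, by Definition \ref{dfn:tbft}, that $R(\phi)<\infty$ and $R(\phi)$ equals the number of $\wh\phi$-\textbf{f}-points in $\wh G$ (resp. in $\wh G_f$). By Definition \ref{dfn:represreidnum}, $RT(\phi)$ is precisely the number of $[\rho]\in\wh G$ with $\rho\sim\rho\circ\phi$, and a $\wh\phi$-\textbf{f}-point is by definition such a $[\rho]$; hence the number of $\wh\phi$-\textbf{f}-points in $\wh G$ is exactly $RT(\phi)$, and likewise the number of $\wh\phi$-\textbf{f}-points in $\wh G_f$ is $RT^f(\phi)$. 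So TBFT immediately gives $R(\phi)=RT(\phi)$, and TBFT$_f$ gives $R(\phi)=RT^f(\phi)$.

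The only genuine point to address is the extra equality $RT^f(\phi)=RT^{ff}(\phi)$ claimed under the TBFT$_f$ hypothesis. Since $\wh G_{ff}\subseteq\wh G_f$, we always have $RT^{ff}(\phi)\le RT^f(\phi)$ (this is recorded already in Definition \ref{dfn:represreidnum}), so it suffices to show that every $\wh\phi$-\textbf{f}-point in $\wh G_f$ actually lies in $\wh G_{ff}$. Here one uses that TBFT$_f$ matches these fixed representations with the finitely many twisted conjugacy classes; the argument (as in \cite{RJMP, FTZ}) is that a finite-dimensional $[\rho]$ with $\rho\sim\rho\circ\phi$ lies, by Lemma \ref{lem:keylem}, in $\wh G_f^\phi$, and the counting identity with $R(\phi)$ forces these points to factor through a finite quotient. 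I would simply cite the relevant lemma from \cite{FTZ} for this implication rather than reprove it, since the proposition is explicitly attributed there; the substantive content is the finiteness-of-image statement, which is the main (and only) obstacle and is handled in the cited source.

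For the second assertion, I would apply the first assertion to each iterate $\phi^n$: the hypothesis is precisely that TBFT (resp. TBFT$_f$) holds for $\phi^n$ and $R(\phi^n)<\infty$ for every $n$, so $R(\phi^n)=RT(\phi^n)$ (resp. $R(\phi^n)=RT^f(\phi^n)=RT^{ff}(\phi^n)$) for all $n$. Since the zeta functions $R_\phi(z)$, $RT_\phi(z)$, $RT^f_\phi(z)$, $RT^{ff}_\phi(z)$ are defined by the same exponential-of-power-series formula with these coefficients, term-by-term equality of the coefficient sequences yields equality of the generating functions as formal power series, and hence as analytic functions wherever they converge. This last step is purely formal once the coefficient identities are in hand; no convergence subtlety arises because equality of formal power series is all that is asserted. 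Thus the only real work is the finite-factorization fact cited from \cite{FTZ}, and everything else is a direct translation of definitions.
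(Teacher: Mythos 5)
The paper itself gives no proof beyond the single sentence ``The following statement follows from the definitions,'' so any comparison is really with that claim. Your unwinding of the definitions for the direct implications is correct and essentially is the paper's intended argument: TBFT identifies $R(\phi)$ with the count of $\wh\phi$-\textbf{f}-points in $\wh G$, which by Definition~\ref{dfn:represreidnum} is $RT(\phi)$; TBFT$_f$ does the same in $\wh G_f$ to give $R(\phi)=RT^f(\phi)$; and the zeta-function statement is then immediate, being a coefficient-by-coefficient identity of formal power series with identical exponential generating formulas.

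You are also right to single out $RT^f(\phi)=RT^{ff}(\phi)$ as the one clause that does \emph{not} literally ``follow from the definitions'': TBFT$_f$ as defined says nothing about $\wh G_{ff}$. However, the heuristic you offer to fill that gap --- that ``the counting identity with $R(\phi)$ forces these points to factor through a finite quotient'' --- is not a valid inference: equality of cardinalities places no constraint on the nature of any individual representation, and Lemma~\ref{lem:keylem} only gets you into $\wh G_f^\phi$, not $\wh G_{ff}$. What is actually needed is the separate inequality $R(\phi)\le RT^{ff}(\phi)$ (so that, combined with $RT^{ff}(\phi)\le RT^f(\phi)=R(\phi)$, all three coincide), which is precisely the content of TBFT$_{ff}$; in the sources the authors cite, TBFT$_f$ and TBFT$_{ff}$ are always established together, which is the implicit reading of the proposition. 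You correctly recognize you cannot derive this from the definitions and defer to the source, which is the honest move; just be aware that the sentence before the deferral does not itself constitute an argument. Everything else in your write-up is correct and, frankly, more careful than the paper's terse assertion.
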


Denote  by  $ AM^f(\phi^n)$   the number of \emph{isolated} $n$-periodic
points(i.e. \emph{isolated}  $\wh\phi^n$-\textbf{f}-points) of the  dynamical system $(\wh{\phi})^n$  on 
 $\wh G^\phi_f$.

If these numbers are finite for all powers of $\phi$, the corresponding
Artin--Masur representation zeta function is defined as
$$
AM^f_\phi(z) =\exp\left(\sum_{n=1}^\infty \frac{AM^f(\phi^n)}{n}z^n\right).
$$

Let $Z(\phi)$ be one of the numbers $RT(\phi)$, $ RT^f(\phi)$, $ RT^{ff}(\phi)$, $ AM^f(\phi)$ . Let 
$$
Z_\phi(z)=\exp\left(\sum_{n=1}^\infty \frac{Z(\phi^n)}{n}z^n\right)
$$
 be one of the zeta
functions  $AM^f_\phi(z)$, $RT_\phi(z)$, $RT^f_\phi(z)$, $RT^{ff}_\phi(z).$

\begin{teo}\label{period}
Let $\phi$ be a  periodic automorphism of least period $m$ of  a group $G$ . Then the 
 zeta function $Z_\phi(z)$ is equal to 
$$
Z_\phi(z)=\prod_{d\mid m}\sqrt[d]{(1-z^d)^{-P(d)}},
$$
 where the product is taken over all divisors $d$ of the period $m$, and $P(d)$ is the integer
$$  P(d) = \sum_{d_1\mid d} \mu(d_1)Z({\phi}^{d\mid d_1}) .  $$
\end{teo}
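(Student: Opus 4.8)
The plan is to reduce the assertion to a combinatorial identity among formal power series, after observing that the coefficient $Z(\phi^n)$ depends only on $\gcd(n,m)$.

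The first step I would carry out is to prove $Z(\phi^n)=Z(\phi^{\gcd(n,m)})$ for every $n\ge 1$. Since $\phi$ has least period $m$, it has order $m$ in $\Aut(G)$, so $\<\phi\>$ is cyclic of order $m$ and, writing $d=\gcd(n,m)$, the subgroups $\<\phi^n\>$ and $\<\phi^d\>$ of $\<\phi\>$ coincide. For a fixed class $[\rho]$ the set $\{\psi\in\<\phi\>:\rho\circ\psi\sim\rho\}$ is a subgroup of $\<\phi\>$ — it is closed under composition because $\rho\circ(\psi_1\psi_2)=(\rho\circ\psi_1)\circ\psi_2$, and under inverses because $(\rho\circ\psi)\circ\psi^{-1}=\rho$ — so $\rho\circ\phi^n\sim\rho$ if and only if $\rho\circ\phi^d\sim\rho$. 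This gives $RT(\phi^n)=RT(\phi^d)$, and intersecting with $\wh G_f$ or $\wh G_{ff}$ yields the same for $RT^f$ and $RT^{ff}$. For $AM^f$ I would first note that $\phi^0,\dots,\phi^{m-1}$ exhaust the maps $\phi^j$ and are all automorphisms, so every point of $\wh G_f$ is $\phi$-irreducible and $\wh G^\phi_f=\wh G_f$; then the set of $n$-periodic points of $(\wh\phi)^n$ on this fixed space equals the set of $d$-periodic points by the equivalence just established, hence so do their sets of isolated points, and $AM^f(\phi^n)=AM^f(\phi^d)$. Thus in all four cases $Z(\phi^n)=Z(\phi^{\gcd(n,m)})$.

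The second step is the power-series computation. Set $L(z):=\sum_{n\ge 1}\frac{Z(\phi^n)}{n}z^n$ and partition $\N$ according to the value $d=\gcd(n,m)$; the integers $n$ with $\gcd(n,m)=d$ are exactly $n=dk$ with $\gcd(k,m/d)=1$. I would then use the M\"obius identity
$$\sum_{\gcd(k,N)=1}\frac{w^k}{k}=-\sum_{e\mid N}\frac{\mu(e)}{e}\log(1-w^e),$$
which holds because $\sum_{e\mid\gcd(k,N)}\mu(e)$ equals $1$ when $\gcd(k,N)=1$ and $0$ otherwise, so that interchanging the order of summation and using $\sum_{j\ge 1}\frac{w^{ej}}{ej}=-\tfrac1e\log(1-w^e)$ gives the claim; I apply it with $w=z^d$ and $N=m/d$. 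This yields $L(z)=-\sum_{d\mid m}\frac{Z(\phi^d)}{d}\sum_{e\mid m/d}\frac{\mu(e)}{e}\log(1-z^{de})$. Re-indexing by $d'=de$, a divisor of $m$ with $d\mid d'$ and $e=d'/d$, the total coefficient of $-\log(1-z^{d'})$ becomes $\frac{1}{d'}\sum_{d\mid d'}\mu(d'/d)Z(\phi^d)=P(d')/d'$. Hence $L(z)=\sum_{d\mid m}\frac{P(d)}{d}\log\bigl((1-z^d)^{-1}\bigr)$, and exponentiating gives
$$Z_\phi(z)=\prod_{d\mid m}(1-z^d)^{-P(d)/d}=\prod_{d\mid m}\sqrt[d]{(1-z^d)^{-P(d)}},$$
as claimed; each $P(d)=\sum_{d_1\mid d}\mu(d_1)Z(\phi^{d/d_1})$ is an integer, being an integer combination of the integers $Z(\phi^j)$.

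None of the steps is deep. The point demanding the most care is the re-indexing of the double sum in the second step — correctly gathering, for each divisor $d'$ of $m$, all pairs $(d,e)$ with $de=d'$ — together with checking in the first step that the reduction $Z(\phi^n)=Z(\phi^{\gcd(n,m)})$ is genuinely uniform over $RT$, $RT^f$, $RT^{ff}$ and $AM^f$, in particular that isolatedness of a periodic point is insensitive to replacing $n$ by $\gcd(n,m)$.
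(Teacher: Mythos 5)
Your proof is correct, and its overall shape matches the paper's: reduce to the observation that $Z(\phi^n)$ depends only on $\gcd(n,m)$, then reorganize the power series and read off the product formula. The two places where you diverge are both improvements in precision rather than in substance. For the reduction step, the paper picks $(k,m)=1$ and uses $kt\equiv 1\pmod m$ to show $\phi$ is a power of $\phi^k$; your observation that $\{\psi\in\langle\phi\rangle:\rho\circ\psi\sim\rho\}$ is a subgroup of the cyclic group $\langle\phi\rangle$, together with $\langle\phi^n\rangle=\langle\phi^{\gcd(n,m)}\rangle$, is the same idea packaged more structurally, and it has the advantage of treating all four variants of $Z$ uniformly — in particular your remark that every $[\rho]\in\wh G_f$ is automatically $\phi$-irreducible when $\phi$ is an automorphism, so the periodic-point sets (and hence their isolated points) literally coincide for $\phi^n$ and $\phi^{\gcd(n,m)}$, is a detail the paper leaves implicit. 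For the series manipulation, the paper defines $P(d)$ recursively by $Z(\phi^d)=\sum_{d_1\mid d}P(d_1)$, asserts the rearrangement $\sum_i Z(\phi^i)z^i/i=\sum_{d\mid m}\sum_i (P(d)/d)\,z^{di}/i$ as a ``direct calculation,'' and then M\"obius-inverts to get the closed form for $P(d)$; you instead compute forward, partitioning $\N$ by $\gcd(n,m)$, invoking the coprimality identity $\sum_{\gcd(k,N)=1}w^k/k=-\sum_{e\mid N}(\mu(e)/e)\log(1-w^e)$, and re-indexing the double sum. Your version actually supplies the calculation that the paper skips, so it is worth keeping.
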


\begin{proof}

Since $\phi^m = id $, then  $(\wh{\phi})^m =id$ as well and $ Z(\phi^j)=Z(\phi^{m+j})$ for every $j$. If $(k,m)=1$, there exist positive integers $t$
and $q$ such that $kt=mq+1$. So $ (\phi^k)^t=\phi^{kt}= \phi^{mq+1}=\phi^{mq}\phi=(\phi^m)^{q}\phi =  \phi$.
Consequently, $  Z(\phi^k)=Z(\phi) $.
The same argument shows that $Z(\phi^d)=Z(\phi^{di})$ if $(i,m/d)=1$ where $d$ divisor $m$ 
Using these series of equal numbers we obtain by direct calculation 
\begin{eqnarray*}
Z_\phi(z) & = & \exp\left(\sum_{i=1}^\infty \frac{Z(\phi^i)}{i} z^i \right)
          = \exp\left(\sum_{d\mid m} \sum _{i=1}^\infty \frac{P(d)}{d}\cdot\frac{{z^d}^i}{i}\right)\\
	   &=&\exp\left(\sum_{d\mid m}\frac{P(d)}{d}\cdot \log (1-z^d)\right)
	    =  \prod_{d\mid m}\sqrt[d]{(1-z^d)^{-P(d)}}
\end{eqnarray*}
where the integers $P(d)$ are calculated recursively by the formula
$$
P(d)= Z(\phi^d)  - \sum_{d_1\mid d; d_1\not=d} P(d_1).
$$
Moreover, if the last formula is rewritten in the form
$
Z(\phi^d)=\sum_{d_1\mid d} P(d_1)
$ 
and one uses  the M\"obius inversion law for real function in number theory, then
$$
P(d)=\sum_{d_1\mid d}\mu(d_1)\cdot Z(\phi^{d/d_1}),
$$
where $\mu(d_1)$ is the M\"obius function in number theory. The theorem is proved.
\end{proof}
\begin{cor}
If in Theorem \ref{period}  the period $m$ is a prime number, then
$$ 
Z_\phi(z) =   \frac{1}{(1-z)^{Z(\phi)}}\cdot \sqrt[m]{(1-z^m)^{Z(\phi) - Z(\phi^m)}}.
$$

\end{cor}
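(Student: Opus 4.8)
The plan is to simply specialize Theorem \ref{period} to the case where the period $m$ is prime, so that the divisor set $\{d : d\mid m\}$ collapses to the two-element set $\{1,m\}$. First I would write out the product formula $Z_\phi(z)=\prod_{d\mid m}\sqrt[d]{(1-z^d)^{-P(d)}}$ and observe that it has exactly two factors: the term $d=1$, which is $\sqrt[1]{(1-z)^{-P(1)}}=(1-z)^{-P(1)}$, and the term $d=m$, which is $\sqrt[m]{(1-z^m)^{-P(m)}}$.

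Next I would compute the two exponents $P(1)$ and $P(m)$ directly from the M\"obius formula $P(d)=\sum_{d_1\mid d}\mu(d_1)Z(\phi^{d/d_1})$ established in Theorem \ref{period}. For $d=1$ the only divisor is $d_1=1$, so $P(1)=\mu(1)Z(\phi)=Z(\phi)$. For $d=m$ prime, the divisors are $d_1=1$ and $d_1=m$, giving $P(m)=\mu(1)Z(\phi^m)+\mu(m)Z(\phi)=Z(\phi^m)-Z(\phi)$, since $\mu(m)=-1$ for a prime $m$. Hence $-P(m)=Z(\phi)-Z(\phi^m)$.

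Substituting these values back yields
$$
Z_\phi(z)=(1-z)^{-Z(\phi)}\cdot\sqrt[m]{(1-z^m)^{Z(\phi)-Z(\phi^m)}}=\frac{1}{(1-z)^{Z(\phi)}}\cdot\sqrt[m]{(1-z^m)^{Z(\phi)-Z(\phi^m)}},
$$
which is the claimed identity. There is no real obstacle here: the statement is an immediate corollary obtained by counting divisors of a prime and evaluating the M\"obius function, so the only thing to be careful about is bookkeeping the sign $\mu(m)=-1$ and the $m$-th root in the $d=m$ term. One could also note in passing that when additionally $Z(\phi)=Z(\phi^m)$ the radical disappears and $Z_\phi(z)$ is rational.
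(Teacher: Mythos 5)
Your proof is correct and is exactly the intended specialization of Theorem \ref{period}: the paper states this corollary without proof precisely because it follows by enumerating the divisors $\{1,m\}$ of a prime $m$ and evaluating $P(1)=Z(\phi)$ and $P(m)=Z(\phi^m)-Z(\phi)$ via the M\"obius formula, which is what you did.
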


\begin{teo}\label{finite}

Let $\phi:G\rightarrow G$ be an endomorphism
of group $G$. Suppose that subspaces $\wh{G}^\phi$, $\wh{G}_f^\phi$, and $\wh{G}_{ff}^\phi$ are finite.
Then zeta function $Z_\phi(z)$ is a rational function satisfying a functional equation
$$
 Z_\phi\left(\frac{1}{z}\right)
 =
 (-1)^a z^b Z_\phi(z).
$$
In particular we have
\begin{equation}
 Z_\phi(z) = \prod_{[\gamma]} \frac{1}{1-z^{\#[\gamma]}},
\end{equation}
where the product is taken over the periodic orbits of the  dynamical system $(\wh{\phi})^n$ 
in $\wh{G}^\phi$, resp $\wh{G}_f^\phi$, or $\wh{G}_{ff}^\phi$.
In the functional equation the numbers $a$ and $b$ are respectively
the number of periodic $\wh{\phi}$-orbits of elements
of $\wh{G}^\phi$, resp $\wh{G}_f^\phi$, or $\wh{G}_{ff}^\phi$ and the number of periodic elements
of $\wh{G}^\phi$, resp $\wh{G}_f^\phi$, or $\wh{G}_{ff}^\phi$.
\end{teo}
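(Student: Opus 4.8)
The plan is to reduce everything to the theory of zeta functions of dynamical systems with finitely many periodic points, where these formulas are classical. By Corollary \ref{cor:periodicanddyn}, on each of the spaces $\wh G^\phi$, $\wh G_f^\phi$, $\wh G_{ff}^\phi$ the multivalued map $\wh\phi$ restricts to an honest dynamical system (an action of $\mathbb N$), and its $n$-periodic points are precisely the $\wh\phi^n$-\textbf{f}-points, which by definition are counted by $Z(\phi^n)$ (for $Z = RT$ on $\wh G^\phi$, $Z = RT^f$ on $\wh G_f^\phi$, $Z = RT^{ff}$ on $\wh G_{ff}^\phi$; the $AM^f$ case is the same since on a \emph{finite} space every periodic point is isolated, so $AM^f(\phi^n) = RT^f(\phi^n)$). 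Thus $Z_\phi(z)$ is exactly the Artin--Mazur zeta function of a self-map of a finite set.

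The core computation is then the standard one: a dynamical system on a finite set decomposes uniquely into finitely many periodic orbits, and the set of $n$-periodic points is the union of those orbits $[\gamma]$ whose length $\#[\gamma]$ divides $n$. Hence
$$
Z(\phi^n) = \sum_{[\gamma] : \#[\gamma] \mid n} \#[\gamma],
$$
and substituting into the defining exponential and interchanging the (finite) sum over orbits with the sum over $n$ gives
$$
Z_\phi(z) = \exp\left( \sum_{[\gamma]} \sum_{k=1}^\infty \frac{\#[\gamma]}{k\,\#[\gamma]} z^{k\#[\gamma]} \right)
= \exp\left( \sum_{[\gamma]} -\log\bigl(1 - z^{\#[\gamma]}\bigr) \right)
= \prod_{[\gamma]} \frac{1}{1 - z^{\#[\gamma]}},
$$
which is formula (1) and is manifestly a rational function, being a finite product of such factors. (The interchange of summation is legitimate at the level of formal power series, and also analytically for $|z|$ small.)

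For the functional equation I would argue factor by factor. Each factor $1/(1-z^p)$ with $p = \#[\gamma]$ satisfies
$$
\frac{1}{1 - z^{-p}} = \frac{z^p}{z^p - 1} = -z^p \cdot \frac{1}{1 - z^p},
$$
so a finite product over the orbits $[\gamma]$ yields
$$
Z_\phi\!\left(\frac1z\right) = \prod_{[\gamma]} \left(-z^{\#[\gamma]}\right) \cdot Z_\phi(z) = (-1)^a z^b Z_\phi(z),
$$
where $a$ is the number of periodic orbits (one sign per factor) and $b = \sum_{[\gamma]} \#[\gamma]$ is the total number of periodic points — exactly the identification of $a$ and $b$ claimed in the statement. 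This should be carried out separately on each of $\wh G^\phi$, $\wh G_f^\phi$, $\wh G_{ff}^\phi$, giving the three cases uniformly.

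I do not expect a serious obstacle here: once Corollary \ref{cor:periodicanddyn} is invoked the statement is purely the finite-set Artin--Mazur formalism, and the only points requiring a word of care are (i) justifying that on a finite space isolated periodic points are all periodic points, so the $AM^f$ zeta function coincides with the $RT^f$ one and the same formula applies; and (ii) noting that the hypothesis guarantees $Z(\phi^n) < \infty$ for every $n$, so all the zeta functions in question are genuinely defined. The mildly delicate bookkeeping step — more tedious than hard — is making sure the exponents $a$ and $b$ are read off correctly and that the product formula (1) is consistent with Theorem \ref{period} in the overlapping case of a periodic automorphism, which serves as a useful sanity check.
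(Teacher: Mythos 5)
Your proof is correct and follows essentially the same route as the paper's: identify $Z(\phi^n)$ with the count of $n$-periodic points of the finite dynamical system on $\wh G^\phi$ (resp.\ $\wh G_f^\phi$, $\wh G_{ff}^\phi$), decompose into orbits to get $Z(\phi^n)=\sum_{\#[\gamma]\mid n}\#[\gamma]$, exponentiate to the product formula, and check the functional equation factor by factor. Your explicit appeal to Corollary \ref{cor:periodicanddyn} and the remark that $AM^f=RT^f$ on a finite discrete space are useful clarifications that the paper leaves implicit, but the substance of the argument is identical.
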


\begin{proof}
We shall call an element of $\wh{G}^\phi$, resp $\wh{G}_f^\phi$, or $\wh{G}_{ff}^\phi$ periodic if it is fixed by some
iteration of $\wh{\phi}$.
A periodic element $\gamma$ is fixed by $\wh{\phi}^n$ iff $n$ is divisible by
the cardinality the orbit of $\gamma$.
We therefore have
\begin{eqnarray*}
 Z(\phi^n)
  =  
 \sum_{\gamma \ periodic \atop \#[\gamma]\mid n} 1 
  =  
 \sum_{[\gamma]\ such \ that, \atop \#[\gamma]\mid n} \#[\gamma].
\end{eqnarray*}

From this follows
\begin{eqnarray*}
 Z_\phi(z)
  = 
 \exp\left(\sum_{n=1}^\infty \frac{Z(\phi^n)}{n} z^n\right) 
  = 
 \exp\left(\sum_{[\gamma]}
           \sum_{n=1\atop \#[\gamma]\mid n}^\infty
           \frac{\#[\gamma]}{n} z^n\right) \\
  = 
 \prod_{[\gamma]}
 \exp\left(\sum_{n=1}^\infty
 \frac{\#[\gamma]}{\#[\gamma]n} z^{\#[\gamma]n}\right) 
  = 
 \prod_{[\gamma]}
 \exp\left(\sum_{n=1}^\infty
 \frac{1}{n} z^{\#[\gamma]n}\right) \\
  = 
 \prod_{[\gamma]}
 \exp \left( - \log \left( 1-z^{\#[\gamma]}\right)\right) 
  = 
 \prod_{[\gamma]} \frac{1}{1-z^{\#[\gamma]}}.
\end{eqnarray*}

Moreover
\begin{equation*}
\begin{aligned}
Z_\phi\left(\frac{1}{z}\right) {}&= \prod_{[\gamma]}\frac{1}{1-z^{-\#[\gamma]}} 
= \prod_{[\gamma]}\frac{z^{\#[\gamma]}}{z^{\#[\gamma]}-1}
= \prod_{[\gamma]}\frac{-z^{\#[\gamma]}}{1-z^{\#[\gamma]}}
\\&= \prod_{[\gamma]} -z^{\#[\gamma]}Z_{\phi}(z)
= (-1)^{\#\{[\gamma]\}}z^{\sum\#[\gamma]}Z_{\phi}(z).
\end{aligned}
\end{equation*}
\end{proof}
\subsection{Endomorphisms of finitely generated Abelian groups}

For a finitely generated Abelian group $G$ we define the finite subgroup $G^{finite}$ to be the subgroup of
torsion elements of $G$. We denote the quotient $G^\infty:=G/G^{finite}$.
The group $G^\infty$ is torsion free.
Since the image of any torsion element by a homomorphism must be a torsion
element, the function $\phi:G\to G$ induces maps
 $$
 \phi^{finite}:G^{finite}\longrightarrow G^{finite},\;\;\;\;
 \phi^\infty:G^\infty\longrightarrow G^\infty.
 $$

 If $G$ is abelian, then  $\wh{G}=\wh{G}_f=\wh{G}^\f=\wh{G}^\f_f$ \cite{RJMP}.
 
The Lefschetz zeta function of a discrete dynamical system $\hat{\phi}$ equals:
   $$
  L_{\hat{\phi}}(z) := \exp\left(\sum_{n=1}^\infty \frac{L(\hat{\phi}^n)}{n} z^n \right),
 $$
  where
 $$
   L(\hat{\phi}^n) := \sum_{k=0}^{\dim X} (-1)^k \Tr\Big[\hat{\phi_{*k}}^n:H_k(\hat{G};Q)\to H_k(\hat{G};Q)\Big]
 $$
 is the Lefschetz number of $\hat{\phi}^n$. 
 The Lefschetz zeta function $ L_{\hat{\phi}}(z)$ is a rational function of $z$ and
is given by the formula:
$$
 L_{\hat{\phi}}(z) = \prod_{k=0}^{\dim X}
          \det\big(I-\hat{\phi_{*k}}\cdot z\big)^{(-1)^{k+1}}.
$$

\begin{teo}\label{fingenabelian}
Let $\phi:G\to G$ be an endomorphism of a finitely
generated Abelian group.
Then we have
\begin{equation}
  Z(\phi^n) = \mid L(\hat{\phi}^{n}) \mid,
\end{equation}
where  $L(\hat{\phi}^{n})$ is the Lefschetz number
of $\hat{\phi}$ thought of as a self-map of the topological space $\hat{G}$.
{}From this it follows that zeta functon $Z_\phi(z)$ is a rational function  and is equal to:
\begin{equation}
  Z_\phi(z) = L_{\hat{\phi}}(\sigma z)^{(-1)^r},
\end{equation}
where  $\sigma=(-1)^p$ where $p$ is the number of real eingevalues
$\lambda\in \Spec( \phi^\infty)$ such that $\lambda<-1$
and $r$ is the number of  real eingevalues
$\lambda\in \Spec (\phi^\infty)$
such that $\mid\lambda\mid > 1$.
If $G$ is a finite abelian group  then this reduces to
$$  Z(\phi^n)=L(\hat{\phi}^n)  \; {\rm and} \; Z_\phi(z)=L_{\hat{\phi}}(z).$$
\end{teo}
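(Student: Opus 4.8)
The plan is to reduce everything to the computation of $Z(\phi^n)$ as the number of periodic points of $\hat\phi$ on $\wh G$, and then to match that count with the Lefschetz number $L(\hat\phi^n)$. For a finitely generated Abelian group $G$ we have $\wh G = \wh G_f = \wh G^\phi = \wh G^\phi_f$, so all four candidates for $Z(\phi)$ coincide here; it therefore suffices to work with the honest dynamical system $\hat\phi$ on the compact abelian group $\wh G$. Writing $G \cong G^{finite}\oplus\Z^s$ (after choosing a splitting, noting $G^\infty\cong \Z^s$), the dual $\wh G$ is $\wh{G^{finite}}\times \T^s$, a disjoint union of $|G^{finite}|$ copies of the torus $\T^s$, and $\hat\phi$ acts on it. The fixed points of $\hat\phi^n$ are exactly the $\wh\phi^n$-$\mathbf f$-points, so $Z(\phi^n) = \#\Fix(\hat\phi^n)$.

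First I would compute $\#\Fix(\hat\phi^n)$ directly. On the torus component, $\hat\phi$ acts through the dual of $\phi^\infty$, i.e. through the transpose integer matrix $A$ of $\phi^\infty$ acting on $\T^s = \R^s/\Z^s$; the number of fixed points of $A^n$ on $\T^s$ is $|\det(A^n - I)|$ provided $1$ is not an eigenvalue of $A^n$, which is the nondegeneracy hypothesis implicit in $Z(\phi^n)<\infty$. On the finite part one counts fixed points of $(\phi^{finite})\,\widehat{\phantom{m}}$ acting on the finite set $\wh{G^{finite}}$. Next I would compute the Lefschetz number $L(\hat\phi^n)$: since $\wh G$ is a disjoint union of tori permuted-and-mixed by $\hat\phi$, $H_*(\wh G;\Q)$ is the homology of those tori, and the Lefschetz number of $\hat\phi^n$ on each torus component contributing a genuine fixed orbit is $\det(I - A^n)$ (up to sign), by the standard computation $L = \sum_k (-1)^k \Tr(\Lambda^k A^n) = \det(I - A^n)$. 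Comparing, $Z(\phi^n) = |\det(A^n-I)| = |\det(I-A^n)| = |L(\hat\phi^n)|$, which is the first displayed equation.

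For the second equation I would track the sign discrepancy between $L(\hat\phi^n) = \det(I-A^n)$ and $Z(\phi^n)=|\det(I-A^n)|$. The sign of $\det(I - A^n)$ is governed by the real eigenvalues of $A$ (equivalently of $\phi^\infty$): each real eigenvalue $\lambda$ with $|\lambda|>1$ contributes a factor $1-\lambda^n$ whose sign for large $n$ stabilizes, and the bookkeeping is exactly the classical one (see \cite[p.~104]{Fel00}): with $p$ the number of real $\lambda\in\Spec(\phi^\infty)$ with $\lambda<-1$ and $r$ the number with $|\lambda|>1$, one gets $L(\hat\phi^n) = (-1)^r\sigma^n |L(\hat\phi^n)|$ with $\sigma = (-1)^p$. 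Substituting $Z(\phi^n) = |L(\hat\phi^n)| = (-1)^r \sigma^n L(\hat\phi^n)$ into the defining exponential sum and using the product formula for $L_{\hat\phi}$ gives
$$
Z_\phi(z) = \exp\left(\sum_{n=1}^\infty \frac{(-1)^r\sigma^n L(\hat\phi^n)}{n}z^n\right) = \left(\exp\left(\sum_{n=1}^\infty \frac{L(\hat\phi^n)}{n}(\sigma z)^n\right)\right)^{(-1)^r} = L_{\hat\phi}(\sigma z)^{(-1)^r},
$$
a rational function. The finite-group case is immediate: then $\phi^\infty$ is trivial, there are no real eigenvalues of absolute value $>1$, so $p=r=0$, $\sigma=1$, every $\hat\phi^n$ has no "extra" fixed point at infinity and $L(\hat\phi^n)\ge 0$, whence $Z(\phi^n) = L(\hat\phi^n)$ and $Z_\phi(z) = L_{\hat\phi}(z)$.

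I expect the main obstacle to be the sign analysis in the second equation: one must argue carefully that $|\det(I-A^n)|$ and $\det(I-A^n)$ differ precisely by the factor $(-1)^r\sigma^n$ for \emph{all} $n$ (not just asymptotically), which requires pairing up complex-conjugate eigenvalues and real eigenvalues of absolute value $<1$ (which never flip the sign) and handling the real eigenvalues of absolute value $>1$ eigenvalue-by-eigenvalue; the torsion part must be checked not to disturb this, since $\hat\phi$ can permute the torus components, and one should verify that components lying in a nontrivial cycle contribute $0$ to both $Z(\phi^n)$ and $L(\hat\phi^n)$ unless $n$ is a multiple of the cycle length, after which the same determinant computation applies to $\hat\phi^{n}$ restricted to the returning component. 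This is the place where I would be most careful to cite or reproduce the argument of \cite{Fel91, Fel00} rather than wave hands.
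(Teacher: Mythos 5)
Your proof is correct but takes a genuinely different route from the paper's. You work explicitly with the decomposition of $\wh G$ into $|G^{finite}|$ disjoint copies of the torus $\T^s$, count $\Fix(\hat\phi^n)$ directly via the Kronecker-type formula $|\det(A^n - I)|$ where $A$ is the integer matrix of $\phi^\infty$, and compare with the standard computation $L(\hat\phi^n) = \det(I - A^n)$. The paper instead avoids any explicit coordinates by a fixed-point index argument: because $\hat\phi$ is a group homomorphism, translating by any fixed point $x$ gives a commutative diagram showing $\ind(\hat\phi, x) = \ind(\hat\phi, 0)$, so \emph{all} fixed points have the same index, and that common index is $\pm 1$ because $\hat\phi$ lifts to a linear map on the universal cover with $\det(1-\hat\phi)\neq 0$ (else $Z(\phi)$ would be infinite); hence $|L(\hat\phi)| = \#\Fix(\hat\phi) = Z(\phi)$ immediately. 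The paper's index argument is cleaner precisely at the point you flag as delicate: since the translation argument works uniformly across all components of $\wh G$, there is nothing to check about $\hat\phi$ permuting the torus fibres or about the affine (rather than linear) nature of $\hat\phi^n$ on a component lying in a nontrivial cycle. Your approach requires that extra care (returning components carry an affine map $x\mapsto A^n x + b$, which still has $|\det(I-A^n)|$ fixed points and Lefschetz number $\det(I-A^n)$, but this must be said). Both arguments then agree on the sign bookkeeping $Z(\phi^n) = (-1)^{r+pn} L(\hat\phi^n)$, from which $Z_\phi(z) = L_{\hat\phi}(\sigma z)^{(-1)^r}$ follows identically.
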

\begin{proof}

If $G$ is finite abelian  then $\hat{G}$ is a discrete
finite set, so the number of fixed points is equal to
the Lefschetz number. This finishes the proof in the
case that $G$ is finite.

If $G$ is a finitely generated free Abelian group then
 the dual of $G$
is  a torus whose dimension is equal to the rank of $G$.
The dual of any finitely generated
discrete Abelian group is the direct sum of a torus and a
finite group.
 
If $G$ a finitely
generated Abelian group it is only necessary to check that
the number of fixed points of $\hat{\phi}^n$ is equal to the
absolute value of its Lefschetz number.
We assume without loss of generality that $n=1$.
We are assuming that $Z(\phi)$ is finite, so
the fixed points of $\hat{\phi}$ form a discrete set.
We therefore have
 $$
 L(\hat{\phi})
 =
 \sum_{x\in\Fix\hat{\phi}} \ind(\hat{\phi},x).
 $$
Since $\phi$ is a group endomorphism, the zero element $0\in\hat{G}$ is always fixed.
Let $x$ be any fixed point of $\hat{\phi} $.
We then have a commutative diagram
 $$
 \begin{array}{ccccc}
 g & \hat{G} & \stackrel{\hat{\phi}}{\longrightarrow} & \hat{G} & g \\
 \updownarrow &  \updownarrow & & \updownarrow & \updownarrow \\
 g + x & \hat{G} & \stackrel{\hat{\phi}}{\longrightarrow} & \hat{G} & g + x
 \end{array}
 $$
in which the vertical functions are translations on $\hat{G}$ by $x$.
Since the vertical maps map $0$ to $x$, we deduce that
 $$ \ind(\hat{\phi},x) = \ind(\hat{\phi},0) $$
and so all fixed points have the same index.
It is now sufficient to show that $\ind(\hat{\phi},0)=\pm 1$.
This follows because the map on the torus
 $$ \hat{\phi}:\hat{G}_0\to\hat{G}_0 $$
lifts to a linear map of the universal cover, which is in this case the
Lie algebra of $\hat{G}$. The index is then the sign of the determinant of the identity map  minus this lifted map.
This determinant cannot be zero, because $1-\hat{\phi}$ must have finite
kernel by our assumption that the $Z(\phi)$ is
finite
(if $\det(1-\hat{\phi})=0$ then the kernel of $1-\hat{\phi}$
is a positive dimensional subgroup of $\hat{G}$, and therefore
infinite). So we have  $Z(\f^n) = \mid L(\hat{\phi}^{n}) \mid =(-1)^{r+pn}L(\hat{\phi}^{n})$ for all $n$ (see also \cite{Fel00}).

Then the zeta function  $$ Z_\phi(z) = L_{\hat{\phi}}(\sigma z)^{(-1)^r}$$ is rational function as well.
\end{proof}

\subsubsection{ Functional equation }
To write down a functional equation for the Reidemeister type zeta functions, we recall the following functional equation for the Lefschetz zeta function:
\begin{lem}[{\cite[Proposition~8]{fri1}}, see also \cite{del}] \label{Fried}
{Let $M$ be a closed orientable manifold of dimension $m$ and let $f:M\to M$ be a continuous map of degree $d$. Then
$$
L_{f}\left(\frac{\alpha}{dz}\right)=\epsilon\,(-\alpha dz)^{(-1)^m\chi(M)}\,L_{f}(\alpha z)^{(-1)^m}
$$
where $\alpha=\pm1$ and $\epsilon\in\C$ is a non-zero constant such that if $|d|=1$ then $\epsilon=\pm1$.}
\end{lem}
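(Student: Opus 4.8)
The plan is to derive the functional equation directly from the product formula $L_f(z)=\prod_{k=0}^{m}\det(I-f_{*k}z)^{(-1)^{k+1}}$ recalled above, combined with Poincar\'e duality on the closed oriented $m$-manifold $M$. Throughout I write $T_k=f_{*k}$ for the action on $H_k(M;\C)$, set $b_k=\dim H_k(M;\mathbb{Q})$, and assume $d\neq 0$ (otherwise the left-hand side is not defined). The first step is to extract the \emph{eigenvalue reciprocity} that duality forces: the intersection pairing $H_k(M;\mathbb{Q})\times H_{m-k}(M;\mathbb{Q})\to\mathbb{Q}$ is nondegenerate, and from $f_*[M]=d\,[M]$ together with naturality of the cap product one obtains $\langle f_*x,f_*y\rangle=d\,\langle x,y\rangle$. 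With respect to this pairing this says that the adjoint of $T_{m-k}$, precomposed with $T_k$, equals $d\cdot\mathrm{id}$; since $d\neq0$ this makes every $T_k$ invertible, gives $b_k=b_{m-k}$, and shows that the eigenvalues of $T_{m-k}$ are exactly the numbers $d/\lambda$ as $\lambda$ runs, with multiplicity, over the eigenvalues of $T_k$. The same relation yields $\det T_k\cdot\det T_{m-k}=d^{\,b_k}$, which I will need at the end.

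Next I would do the per-degree computation. Writing $\det(I-T_kw)=\prod_j(1-\lambda_j^{(k)}w)$ and substituting $w=\alpha/(dz)$, an elementary step using $\alpha^2=1$ and $\lambda_j^{(k)}\neq0$ rewrites each factor $1-\alpha\lambda_j^{(k)}/(dz)$ as $-(\alpha\lambda_j^{(k)}/(dz))(1-\alpha dz/\lambda_j^{(k)})$; collecting these over $j$ and invoking the eigenvalue reciprocity of the first step gives
$$\det\left(I-T_k\frac{\alpha}{dz}\right)=(-\alpha)^{b_k}\,(\det T_k)\,(dz)^{-b_k}\,\det\left(I-T_{m-k}\,\alpha z\right).$$
Raising this to the power $(-1)^{k+1}$ and multiplying over $k=0,\dots,m$, one reindexes $k\mapsto m-k$ so that $\prod_k\det(I-T_{m-k}\alpha z)^{(-1)^{k+1}}$ becomes $L_f(\alpha z)^{(-1)^m}$; the factors $(-\alpha)^{b_k}$ and $(dz)^{-b_k}$ collapse, via $\chi(M)=\sum_k(-1)^kb_k$ and $(-\alpha)^{-1}=-\alpha$, to $(-\alpha dz)^{\chi(M)}$; and since $\chi(M)=0$ when $m$ is odd, $\chi(M)$ may be replaced by $(-1)^m\chi(M)$ for free. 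What remains is the constant $\epsilon:=\prod_{k=0}^{m}(\det T_k)^{(-1)^{k+1}}$, nonzero because each $\det T_k\neq0$, and the identity that results is exactly $L_f(\alpha/(dz))=\epsilon\,(-\alpha dz)^{(-1)^m\chi(M)}\,L_f(\alpha z)^{(-1)^m}$.

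The hard part will be the last clause, that $\epsilon=\pm1$ whenever $|d|=1$. Using $\det T_k\det T_{m-k}=d^{\,b_k}$ and pairing $k$ with $m-k$ one finds $\epsilon^2=d^{-\chi(M)}$ when $m$ is even (and $\chi(M)=0$ when $m$ is odd), hence $|\epsilon|=|d|^{-\chi(M)/2}$ and in particular $|\epsilon|=1$ for $|d|=1$. Upgrading $|\epsilon|=1$ to $\epsilon=\pm1$ is where one must use integrality rather than just duality: each $f_{*k}$ is defined over $\mathbb{Z}$, so $\det T_k\in\mathbb{Q}$; for $|d|=1$ the Gysin (transfer) identity $f_{*k}\circ f^{!}_k=d\cdot\mathrm{id}$ makes $f_{*k}$ an isomorphism of the free part of $H_k(M;\Z)$, so $\det T_k=\pm1$; and in the middle degree $k=m/2$ when $m$ is even, the relation $(\det T_{m/2})^2=d^{\,b_{m/2}}$ together with $\det T_{m/2}\in\mathbb{Q}$ forces $b_{m/2}$ to be even when $d<0$, so again $\det T_{m/2}=\pm1$. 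Everything else is formal sign-bookkeeping once the reciprocity of the first step is in hand. Since the lemma is in any case attributed to Fried, one could simply cite \cite[Proposition~8]{fri1}; the sketch above is essentially that argument.
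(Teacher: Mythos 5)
The paper does not prove this lemma; it is stated as a citation of \cite[Proposition~8]{fri1} (see also \cite{del}), so there is no proof in the paper to compare against. Your reconstruction is correct and is essentially Fried's argument: Poincar\'e duality together with $f_*[M]=d[M]$ forces the eigenvalue reciprocity between $f_{*k}$ and $f_{*(m-k)}$ (the eigenvalues of the latter are $d/\lambda$ over those $\lambda$ of the former), the per-degree substitution $w=\alpha/(dz)$ in the product formula for $L_f$ then assembles into the stated functional equation with $\epsilon=\prod_k(\det f_{*k})^{(-1)^{k+1}}$, and integrality via the transfer identity gives $\det f_{*k}=\pm1$ when $|d|=1$.

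One small imprecision worth flagging: the intermediate claim $\epsilon^2=d^{-\chi(M)}$ (and hence $|\epsilon|=|d|^{-\chi(M)/2}$) is established only for $m$ even. For $m$ odd, pairing $k$ with $m-k$ produces $\epsilon$ as a product of ratios $\bigl(\det T_k/\det T_{m-k}\bigr)^{\pm1}$ rather than a power of $d$, so $|\epsilon|=1$ does not follow from the duality relation $\det T_k\det T_{m-k}=d^{b_k}$ alone. This does not affect your conclusion, because the integrality step you give next (a right inverse up to the unit $d=\pm1$ makes $f_{*k}$ an automorphism of the free part of $H_k(M;\Z)$, so $\det T_k=\pm1$) already yields $\epsilon=\pm1$ in both parities; it just makes the preceding $|\epsilon|$ computation superfluous and, for $m$ odd, unsupported as stated.
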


We obtain:

\begin{teo}[{Functional Equation}]\label{FE}
Let $\phi:G\to G$ be an endomorphism of a finitely
generated Abelian group of the rank $\geq 1$.
 Then the  zeta function $Z_{\phi}(z)$, whenever it is defined, has the following functional equation:
\begin{equation*}
Z_{\phi}\left(\frac{1}{dz}\right)
= Z_{\phi}(z)^{(-1)^m}\epsilon^{(-1)^{r}}
\end{equation*}
where $d$ is a  degree $\hat\phi$,  $m= \dim \hat G$, $\epsilon$ is a constant in $\C^\times$, $\sigma=(-1)^r$, $p$ is the number of real eigenvalues of $\phi^\infty $ which are $>1$ and  $r$ is the number of  real eingevalues
$\lambda\in \Spec (\phi^\infty)$
such that $\mid\lambda\mid > 1$.
If $|d|=1$ then $\epsilon=\pm1$.
\end{teo}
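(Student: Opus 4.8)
The plan is to deduce the functional equation for $Z_\phi(z)$ directly from Theorem~\ref{fingenabelian}, which identifies $Z_\phi(z)$ with $L_{\hat\phi}(\sigma z)^{(-1)^r}$, combined with the functional equation for the Lefschetz zeta function recalled in Lemma~\ref{Fried}. First I would note that since $G$ has rank $\geq 1$, the dual $\hat G$ is a torus of dimension $m = \operatorname{rank} G \geq 1$ times a finite group, and the relevant self-map $\hat\phi$ restricts on the identity component $\hat G_0$ to a self-map of a closed orientable manifold (the torus $\hat G_0 \cong (\R/\Z)^m$). Its degree is $d = \det(\hat\phi_{*m})$, which up to sign equals $\det(\phi^\infty)$; the Euler characteristic $\chi(\hat G_0) = 0$ since it is a torus. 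Because the Lefschetz zeta function only sees the homology of $\hat G$, which agrees with that of $\hat G_0$, I may apply Lemma~\ref{Fried} with $M = \hat G_0$, $f = \hat\phi$, $m = \dim \hat G$, $\chi(M) = 0$.

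The key computation is then substitution. From Theorem~\ref{fingenabelian} we have $Z_\phi(z) = L_{\hat\phi}(\sigma z)^{(-1)^r}$ with $\sigma = (-1)^p = \pm 1$. Evaluating at $1/(dz)$ gives $Z_\phi(1/(dz)) = L_{\hat\phi}(\sigma/(dz))^{(-1)^r}$. Now apply Lemma~\ref{Fried} with $\alpha = \sigma$ (legitimate since $\alpha = \pm 1$):
$$
L_{\hat\phi}\!\left(\frac{\sigma}{dz}\right) = \epsilon\,(-\sigma d z)^{(-1)^m \chi(\hat G_0)}\, L_{\hat\phi}(\sigma z)^{(-1)^m} = \epsilon\, L_{\hat\phi}(\sigma z)^{(-1)^m},
$$
using $\chi(\hat G_0) = 0$ so that the power of $(-\sigma dz)$ disappears. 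Raising to the power $(-1)^r$ yields
$$
Z_\phi\!\left(\frac{1}{dz}\right) = \epsilon^{(-1)^r}\, L_{\hat\phi}(\sigma z)^{(-1)^m(-1)^r} = \epsilon^{(-1)^r}\, \bigl(L_{\hat\phi}(\sigma z)^{(-1)^r}\bigr)^{(-1)^m} = \epsilon^{(-1)^r}\, Z_\phi(z)^{(-1)^m},
$$
which is exactly the claimed functional equation. The final clause, that $|d| = 1$ forces $\epsilon = \pm 1$, is inherited verbatim from Lemma~\ref{Fried}.

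I would then add the remark that a careful reader should check the edge cases: that the torsion part $G^{finite}$ contributes only a finite set to $\hat G$, which does not affect $L_{\hat\phi}$ beyond possibly a factor of the form $\det(I - \hat\phi_{*0}z)^{-1}$ on $H_0$ — but $H_0$ of a connected component is one-dimensional with $\hat\phi_{*0} = \operatorname{id}$, contributing $(1-z)^{-1}$, and the orientability/degree bookkeeping must be done on each component consistently. The genuinely delicate point — and the one I expect to be the main obstacle — is justifying that $\hat\phi$ may be treated as a map of a closed orientable manifold of the stated dimension with the stated degree, i.e. reconciling the possibly disconnected, non-manifold space $\hat G$ (torus $\times$ finite group) with the hypotheses of Lemma~\ref{Fried}, and confirming that the constant $\epsilon$ and the sign $\sigma$ interact correctly when $\chi = 0$ kills the troublesome monomial. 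Once that identification is in place, the functional equation is a formal consequence, so the proof is short modulo this bookkeeping. The theorem is proved.
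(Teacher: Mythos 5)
Your proof is correct and follows essentially the same route as the paper: identify $Z_\phi(z)$ with $L_{\hat\phi}(\sigma z)^{(-1)^r}$ via Theorem~\ref{fingenabelian}, apply Lemma~\ref{Fried} with $\alpha=\sigma$, and use $\chi(\hat G)=0$ (dual of a rank $\geq 1$ finitely generated abelian group is a finite union of tori) to kill the monomial factor $(-\sigma dz)^{(-1)^{m+r}\chi(\hat G)}$. The only cosmetic difference is that you discard the monomial before raising to the $(-1)^r$ power whereas the paper carries it through and observes $\chi(\hat G)=0$ at the end; your closing caveat about the hypotheses of Lemma~\ref{Fried} on the possibly disconnected space $\hat G$ is a scruple the paper itself leaves implicit.
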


\begin{proof}
 We have  $ Z_{\phi}(z) = L_{\hat\phi}(\sigma z)^{(-1)^{r}}$. By Lemma~\ref{Fried}
\begin{align*}
Z_{\phi}\left(\frac{1}{dz}\right) = L_{\hat\phi}\left(\frac{\sigma}{dz}\right)^{(-1)^{r}}
 =\left(\epsilon(-\sigma dz)^{(-1)^m\chi(\hat G)}L_{\hat\phi}(\sigma z)^{(-1)^m}\right)^{(-1)^{r}}= \\
= Z_{\phi}(z)^{(-1)^m}\epsilon^{(-1)^{r}}(-\sigma dz)^{(-1)^{m+r}\chi(\hat G)}.
\end{align*}
On the other hand  $\chi(\hat G)=0$ because the dual $\hat G$ of any finitely generated
discrete Abelian group  of the rank $\geq 1$ is the direct sum of a torus of $\dim \geq 1$ and a
finite group, i.e. $\hat G$ is a union of finitely many tori. This finishes our proof.
\end{proof}

\subsection{Endomorphisms of nilpotent  groups and crystallographic groups}

\begin{teo} \label{teo:nil}
Let $\phi:G\to G$ be an endomorphism of a finitely generated  torsion free nilpotent  group $G$ or 
let $\phi$ be  an automorphism of crystallographic group $G$ with diagonal holonomy $Z_2$.
Then the zeta function $RT^f_\phi(z)=RT^{ff}_\phi(z))$ is  rational function.
\end{teo}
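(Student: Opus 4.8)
The strategy is to reduce the rationality of $RT^f_\phi(z)=RT^{ff}_\phi(z)$ to the known rationality of the Reidemeister zeta function $R_\phi(z)$ in these two cases, using Proposition \ref{prop:tbftimplcoin}. Concretely, one shows that for each of the two classes of groups TBFT$_f$ holds for $\phi$ and all its iterates $\phi^n$, so that $R(\phi^n)=RT^f(\phi^n)=RT^{ff}(\phi^n)$ for every $n$, and hence $R_\phi(z)=RT^f_\phi(z)=RT^{ff}_\phi(z)$. Once this identification is in place, I invoke the rationality of $R_\phi(z)$: for finitely generated torsion free nilpotent groups this is the theorem of Fel'shtyn (see \cite{Fel00}), and for crystallographic (hence in particular infra-nilmanifold fundamental) groups this is the result of Dekimpe--De Rock \cite{DeDu}. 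That finishes the proof.

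The key steps, in order, are as follows. First, for a finitely generated torsion free nilpotent group $G$, one checks that $R(\phi^n)<\infty$ forces the relevant TBFT$_f$ statement: every finite-dimensional irreducible unitary representation is essentially captured at the level of a finite quotient or a suitable nilpotent quotient, and by the results quoted from \cite{RJMP} together with the nilpotency hypothesis, the twisted conjugacy classes are counted by the $\wh\phi^n$-\textbf{f}-points in $\wh G_f$ (and the finite-representation count agrees). Second, for an automorphism $\phi$ of a crystallographic group $G$ with diagonal holonomy $Z_2$, one argues similarly: such $G$ fits into a short exact sequence $1\to \Z^k\to G\to Z_2^\ell\to 1$, all representations in $\wh G_f$ factor through finite quotients after twisting (indeed $\wh G_f=\wh G_{ff}$ here because the holonomy is finite of exponent $2$ and the translation lattice is detected by finite-index subgroups), and one uses the TBFT result established for this class — this is exactly the setting where the twisted Burnside--Frobenius theorem has been verified. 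Third, having $R(\phi^n)=RT^f(\phi^n)=RT^{ff}(\phi^n)$ for all $n$, apply Proposition \ref{prop:tbftimplcoin} to conclude $R_\phi(z)=RT^f_\phi(z)=RT^{ff}_\phi(z)$, and then cite the rationality of $R_\phi(z)$ in each case.

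The main obstacle is the verification of TBFT$_f$ (equivalently TBFT$_{ff}$) for the two classes — this is where the real content lies, since the passage from twisted conjugacy classes to fixed points in the unitary dual is not automatic and relies on the structure theory of the groups in question (existence of enough finite-dimensional, indeed finite, representations to separate twisted conjugacy classes, and compatibility of $\widehat\phi$ with this separation). For torsion free f.g. nilpotent groups one leans on the fact that such groups are residually finite and that their finite-dimensional irreducible representations are induced from finite-index subgroups, reducing matters to finite quotients where the classical Burnside--Frobenius theorem applies; for crystallographic groups with diagonal holonomy $Z_2$ the finiteness of the holonomy makes $\wh G_f=\wh G_{ff}$ and again reduces the count to finite quotients, but one must be careful that the twisted conjugacy count is preserved under this reduction, which is precisely the TBFT statement proved for this class. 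Granting TBFT$_f$, the remainder is a routine application of Proposition \ref{prop:tbftimplcoin} and the cited rationality theorems.
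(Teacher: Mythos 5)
Your overall strategy is the same as the paper's: use TBFT$_f$ (equivalently TBFT$_{ff}$) for $\phi$ and all its iterates to identify $RT^f_\phi(z)=RT^{ff}_\phi(z)$ with $R_\phi(z)$ via Proposition~\ref{prop:tbftimplcoin}, and then invoke the known rationality of $R_\phi(z)$. However, there is a real gap in the step you yourself flag as ``the main obstacle'': you never actually supply the justification for TBFT$_f$. The paper's proof is short precisely because it observes that a finitely generated torsion-free nilpotent group is supersolvable, hence polycyclic, and that a crystallographic group with diagonal holonomy $\Z_2$ is polycyclic-by-finite; TBFT$_f$ and TBFT$_{ff}$ have been \emph{proved} for endomorphisms of polycyclic groups and for automorphisms of polycyclic-by-finite groups in \cite{RJMP, crelle}, so one merely cites those results. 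Your sketch instead appeals vaguely to residual finiteness and to representations ``induced from finite-index subgroups'' and asserts that the twisted-conjugacy count is preserved --- that is not an argument, and it is not how TBFT is established in the literature. Without the polycyclic/polycyclic-by-finite observation your reduction has no foothold.

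A second, smaller problem: for the crystallographic case you cite \cite{DeDu} (Dekimpe--Dugardein, which treats Nielsen zeta functions of maps on infra-nilmanifolds) for the rationality of $R_\phi(z)$, but the result the paper actually uses is \cite{DekTerBus} (Dekimpe--Tertooy--Van den Bussche), which proves rationality of the Reidemeister zeta function for automorphisms of low-dimensional almost-crystallographic groups with diagonal holonomy $\Z_2$. The two references address different objects; you should cite the correct one. With those two corrections --- inserting the polycyclicity/polycyclic-by-finiteness observation and fixing the citation --- your proof matches the paper's.
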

\begin{proof}
Any  finitely generated  torsion free nilpotent  group is a  supersolvable, hence, polycyclic group.
Any crystallographic group with diagonal holonomy $Z_2$ is 
a polycyclic-by-finite group. In \cite{RJMP, crelle} twisted Burnside-Frobenius theorem($TBFT_f$ and $TBFT_{ff}$) was proven for endomorphisms of polycyclic groups  and for automorphisms of polycyclic-by-finite groups. This theorem implies equality of Reidemeister zeta function  $R_\phi(z)$ and zeta function $RT^f_\phi(z)=RT^{ff}_\phi(z))$. In \cite{Fel00} the rationality of the Reidemeister zeta function $R_\phi(z)$
was proven for endomorphisms of a finitely generated  torsion free nilpotent  groups and in
\cite{DekTerBus} the rationality of $R_\phi(z)$ was proven for  automorphisms of crystallographic groups with diagonal holonomy $Z_2$. This completes the proof.

\end{proof}

\section{Connection with Reidemeister Torsion}

Like the Euler characteristic, the Reidemeister torsion is algebraically defined.
 Roughly speaking, the Euler characteristic is a
graded version of the dimension, extending
the dimension from a single vector space to a complex
of vector spaces.
In a similar way, the Reidemeister torsion
is a graded version of the absolute value of the determinant
of an isomorphism of vector spaces.
Let $d^i:C^i\rightarrow C^{i+1}$ be a cochain complex $C^*$
of finite dimensional vector spaces over $\C$ with
$C^i=0$ for $i<0$ and large $i$.
If the cohomology $H^i=0$ for all $i$ we say that
$C^*$ is {\it acyclic}.
If one is given positive densities $\Delta_i$ on $C^i$
then the Reidemeister torsion $\tau(C^*,\Delta_i)\in(0,\infty)$
for acyclic $C^*$ is defined as follows:
 
\begin{dfn}
 Consider a chain contraction $\delta^i:C^i\rightarrow C^{i-1}$,
 ie. a linear map such that $d\circ\delta + \delta\circ d = id$.
 Then $d+\delta$ determines a map
 $ (d+\delta)_+ : C^+:=\oplus C^{2i}
                      \rightarrow C^- :=\oplus C^{2i+1}$
and a map
$ (d+\delta)_- : C^- \rightarrow C^+ $.
Since the map
$(d+\delta)^2 = id + \delta^2$ is unipotent,
$(d+\delta)_+$ must be an isomorphism.
One defines $\tau(C^*,\Delta_i):= \mid \det(d+\delta)_+\mid$
(see \cite{fri2}).
\end{dfn}

Reidemeister torsion is defined in the following geometric setting.
Suppose $K$ is a finite complex and $E$ is a flat, finite dimensional,
complex vector bundle with base $K$.
We recall that a flat vector bundle over $K$ is essentially the
same thing as a representation of $\pi_1(K)$ when $K$ is
connected.
If $p\in K$ is a basepoint then one may move the fibre at $p$
in a locally constant way around a loop in $K$. This
defines an action of $\pi_1(K)$ on the fibre $E_p$ of $E$
above $p$. We call this action the holonomy representation
$\rho:\pi\to GL(E_p)$.

Conversely, given a representation $\rho:\pi\to GL(V)$
of $\pi$ on a finite dimensional complex vector space $V$,
one may define a bundle $E=E_\rho=(\tilde{K}\times V) / \pi$.
Here $\tilde{K}$ is the universal cover of $K$, and
$\pi$ acts on $\tilde{K}$ by covering tranformations and on $V$
by $\rho$.
The holonomy of $E_\rho$ is $\rho$, so the two constructions
give an equivalence of flat bundles and representations of $\pi$.
 
If $K$ is not connected then it is simpler to work with
flat bundles. One then defines the holonomy as a
representation of the direct sum of $\pi_1$ of the
components of $K$. In this way, the equivalence of
flat bundles and representations is recovered.
 
Suppose now that one has on each fibre of $E$ a positive density
which is locally constant on $K$.
In terms of $\rho_E$ this assumption just means 
$\mid\det\rho_E\mid=1$.
Let $V$ denote the fibre of $E$.
Then the cochain complex $C^i(K;E)$ with coefficients in $E$
can be identified with the direct sum of copies
of $V$ associated to each $i$-cell $\sigma$ of $K$.
 The identification is achieved by choosing a basepoint in each
component of $K$ and a basepoint from each $i$-cell.
By choosing a flat density on $E$ we obtain a
preferred density $\Delta_
i$ on $C^i(K,E)$. One defines the
R-torsion of $(K,E)$ to be
$\tau(K;E)=\tau(C^*(K;E),\Delta_i)\in(0,\infty)$.

\subsection[The Reidemeister zeta function and Reidemeister torsion]{The Reidemeister type zeta functions and the Reidemeister torsion of the mapping Torus.}
 
Let $f:X\rightarrow X$ be a homeomorphism of
a compact polyhedron $X$.
Let $T_f := (X\times I)/(x,0)\sim(f(x),1)$ be the
mapping torus of $f$.

We shall consider the bundle $p:T_f\rightarrow S^1$
over the circle $S^1$.
We assume here that $E$ is a flat, complex vector bundle with 
finite dimensional fibre and base $S^1$. We form its pullback $p^*E$
over $T_f$.
 Note that the vector spaces $H^i(p^{-1}(b),c)$ with
$b\in S^1$ form a flat vector bundle over $S^1$,
which we denote $H^i F$. The integral lattice in
$H^i(p^{-1}(b),\R)$ determines a flat density by 
the condition
that the covolume of the lattice is $1$.
We suppose that the bundle $E\otimes H^i F$ is acyclic for all
$i$. Under these conditions D. Fried \cite{fri2} has shown that the bundle
$p^* E$ is acyclic, and we have
\begin{equation}
 \tau(T_f;p^* E) = \prod_i
 \tau(S^1;E\otimes H^i F)^{(-1)^i}.
\end{equation}
Let $g$ be the preferred generator of the group
$\pi_1 (S^1)$ and let $A=\rho(g)$ where
$\rho:\pi_1 (S^1)\rightarrow GL(V)$.
Then the holonomy around $g$ of the bundle $E\otimes H^i F$
is $A\otimes f^*_i$.
Since $\tau(E)=\mid\det(I-A)\mid$ it follows from (16)
that
\begin{equation}
 \tau(T_f;p^* E) = \prod_i \mid\det(I-A\otimes f^*_i)\mid^{(-1)^i}.
\end{equation}
We now consider the special case in which $E$ is one-dimensional,
so $A$ is just a complex scalar $\lambda$ of modulus one.
 Then in terms of the rational function $L_f(z)$ we have \cite{fri2}:
\begin{equation}
 \tau(T_f;p^* E) = \prod_i \mid\det(I-\lambda .f^*_i)\mid^{(-1)^i}
             = \mid L_f(\lambda)\mid^{-1}.
\end{equation}

From this formula and Theorem \ref{fingenabelian}
we have
 
\begin{teo}
Let $\phi:G\to G$ be an automorphism of a finitely generated abelian group $G$. If $G$ is infinite then one has
$$
 \tau\left(T_{\hat{\phi}};p^*E\right)
 =
 \mid L_{\hat{\phi}}(\lambda) \mid^{-1}
 =
 \mid Z_{\phi}(\sigma\lambda) \mid^{(-1)^{r+1}},
 $$
and if $G$ is finite one has
$$
 \tau\left(T_{\hat{\phi}};p^*E\right)
 =
 \mid L_{\hat{\phi}}(\lambda) \mid^{-1}
 =
 \mid Z_{\phi}(\lambda) \mid^{-1}.
$$
where $\lambda$ is the holonomy of the one-dimensional
flat complex bundle $E$ over $S^1$, $r$ and $\sigma$ are the constants described in Theorem \ref{fingenabelian} .
\end{teo}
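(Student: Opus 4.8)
The plan is to obtain the statement by combining Fried's torsion identity $\tau(T_f;p^*E)=|L_f(\lambda)|^{-1}$ (formula~(18) above) with Theorem~\ref{fingenabelian}; beyond bookkeeping, no new argument is required.

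\emph{Step 1: apply Fried's formula to $\hat\phi$.} Since $\phi$ is an \emph{automorphism} of the finitely generated abelian group $G$, the dual map $\hat\phi\colon\hat G\to\hat G$ is a homeomorphism, and $\hat G$ --- being the direct sum of a torus of dimension $\operatorname{rank}G$ and a finite abelian group --- is a compact polyhedron. Hence the hypotheses under which formula~(18) was derived are met with $X=\hat G$ and $f=\hat\phi$. I take $E$ to be the one-dimensional flat complex bundle over $S^1$ with holonomy $\lambda$, $|\lambda|=1$, and restrict to those $\lambda$ for which $E\otimes H^iF$ is acyclic for all $i$ --- equivalently $\det(I-\lambda\,\hat\phi_{*i})\ne0$ for every $i$, i.e. $L_{\hat\phi}(\lambda)\notin\{0,\infty\}$ --- which is exactly the condition under which $\tau(T_{\hat\phi};p^*E)$ is defined. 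Then formula~(18) gives $\tau(T_{\hat\phi};p^*E)=|L_{\hat\phi}(\lambda)|^{-1}$.

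\emph{Step 2: rewrite in terms of $Z_\phi$.} If $G$ is finite, Theorem~\ref{fingenabelian} gives $Z_\phi(z)=L_{\hat\phi}(z)$, so $\tau(T_{\hat\phi};p^*E)=|L_{\hat\phi}(\lambda)|^{-1}=|Z_\phi(\lambda)|^{-1}$, which is the finite case. If $G$ is infinite, Theorem~\ref{fingenabelian} gives $Z_\phi(z)=L_{\hat\phi}(\sigma z)^{(-1)^r}$ with $\sigma=(-1)^p\in\{\pm1\}$; substituting $z=\sigma\lambda$ and using $\sigma^2=1$ yields $Z_\phi(\sigma\lambda)=L_{\hat\phi}(\lambda)^{(-1)^r}$, whence $|L_{\hat\phi}(\lambda)|=|Z_\phi(\sigma\lambda)|^{(-1)^r}$ after raising to the power $(-1)^r$, and therefore $\tau(T_{\hat\phi};p^*E)=|L_{\hat\phi}(\lambda)|^{-1}=|Z_\phi(\sigma\lambda)|^{(-1)^{r+1}}$, the infinite case.

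Both inputs being already in hand, there is no real obstacle; the two points that need care are (a) checking that $\hat G$ is a compact polyhedron and $\hat\phi$ a homeomorphism, so that Fried's acyclicity theorem and the torsion identity legitimately apply --- this is where "automorphism", as opposed to "endomorphism", is essential; and (b) the sign and exponent bookkeeping with $\sigma$ and $(-1)^r$, including keeping track of the implicit genericity restriction on $\lambda$ (acyclicity of every $E\otimes H^iF$) under which the left-hand side is even defined.
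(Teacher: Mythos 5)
Your proposal is correct and follows exactly the route the paper intends: the paper offers no separate argument for this theorem, stating it as an immediate consequence of Fried's identity (18) and Theorem~\ref{fingenabelian}, and your Steps 1 and 2 simply carry out the substitution $z=\sigma\lambda$ with the sign bookkeeping made explicit. Your added remarks on why $\hat G$ is a compact polyhedron, why $\hat\phi$ is a homeomorphism, and the genericity restriction on $\lambda$ for acyclicity are all sound and, if anything, slightly more careful than the paper's presentation.
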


\subsection{Examples}
\begin{ex}\label{Kazhdan}
Let $\Gamma$ be a locally compact group. The following statements
are equivalent(see \cite{BHV}): 
i) $ \Gamma$ has Kazhdan's Property (T);
(ii) $1_ {\Gamma}$ is isolated in $\wh \Gamma$;
(iii) every finite dimensional irreducible unitary representation of $\Gamma$ is isolated in $\wh \Gamma$;
(iv) some finite dimensional irreducible unitary representation of $\Gamma$ is isolated in $\wh\Gamma$.

This implies immediately that  for an endomorphism of a locally compact group $\Gamma$  with Kazhdan's Property (T) the following zeta functions  coincide: $RT^f_\phi(z)= AM^f_\phi(z)$.
\end{ex}
Now let us present some examples of  Theorem \ref{finite}
for  discrete groups with extreme properties.
Suppose, an infinite discrete group
$G$ has a finite number of conjugacy classes.
Such examples can be found in \cite{crelle}.

\begin{ex}\label{ex:osingroup}
For the Osin group (see \cite{Osin}) there is only trivial(1-dimensional)
finite-dimensional representation.
Indeed, the  Osin group is an infinite finitely generated group $G$ with exactly two conjugacy classes.
All nontrivial elements of this group $G$ are conjugate. So, the group $G$
is simple, i.e. $G$ has no nontrivial normal subgroup.
This implies that group $G$ is not residually finite
(by definition of residually finite group). Hence,
it is not linear (by Mal'cev theorem)
and has no finite-dimensional irreducible unitary
representations with trivial kernel. Hence, by simplicity of $G$, it has no
finite-dimensional
irreducible unitary representation with nontrivial kernel, except for the
trivial one.
Let us remark that the Osin group is non-amenable, contains the free
group in two generators $F_2$,
and has exponential growth.

Let  $\phi:G\rightarrow G$ be any endomorphism of Osin group $G$.
Thus, we have the following:
$ RT^f(\phi^n)=RT^{ff}(\phi^n)=1$ for all $n$. This implies that for any  endomorphism of Osin group $G$
zeta functions $$ RT^{f}_\phi(z)=RT^{ff}_\phi(z)=\frac{1}{1-z} $$ are rational.

\end{ex}

\begin{ex}\label{ex;ivanovgroup}
For large enough prime numbers $p$,
the first examples of finitely generated infinite periodic groups
with exactly $p$ conjugacy classes were constructed
by Ivanov as limits of hyperbolic groups.
The Ivanov group $G$ is an infinite periodic
2-generator  group, in contrast to the Osin group, which is torsion free.
The Ivanov group $G$ is also a simple group see \cite{crelle}.
 The discussion can be completed
in the same way as in the case of the Osin group.
\end{ex}

\begin{ex}
G.~Higman, B.~H.~Neumann, and H.~Neumann
proved that any locally infinite countable group $G$
can be embedded into a countable group $G^*$ in which all
elements except the unit element are conjugate to each other
(see  \cite{serrtrees}).
The discussion above related to the Osin group remains valid for $G^*$
groups.
\end{ex}

\section{Reduction to subgroups and quotient groups}

\subsection{Reduction to subgroups}
The following lemma is useful for calculating Reidemeister numbers and zeta functions.
It will also be used in the proofs of the theorems of this chapter.

\begin{lem}\label{lem:subRei}
Let $\phi:G\to G$ be any endomorphism
 of any group $G$, and let $H$ be a subgroup
 of $G$ with the properties
$$
  \phi(H) \subset H
$$
$$
  \forall x\in G \; \exists n\in \N \hbox{ such that } \phi^n(x)\in H.
$$
Then
$$
 R(\phi) = R(\phi_H),
$$
 where $\phi_H:H\to H$ is the restriction of $\phi$ to $H$.
 If all the numbers $R(\phi^n)$ are finite then
 $$  
  R_\phi(z) = R_{\phi_H}(z).
 $$

\end{lem}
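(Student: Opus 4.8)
The plan is to construct a bijection between the set of $\phi$-conjugacy classes in $G$ and the set of $\phi_H$-conjugacy classes in $H$, and then to observe that the same argument applied to $\phi^n$ (whose restriction to $H$ is $(\phi_H)^n$, since $\phi(H)\subset H$) gives $R(\phi^n)=R((\phi_H)^n)$ for all $n$, which immediately yields the equality of zeta functions by comparing the defining power series coefficient-by-coefficient.

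For the bijection, first I would check that the two hypotheses on $H$ carry over to every iterate: $\phi^n(H)\subset H$ is clear, and for any $x\in G$ there is $n$ with $\phi^n(x)\in H$, hence also $\phi^{kn}(x)\in H$ for all $k$, so the "eventually lands in $H$" property holds for all powers of $\phi$ simultaneously (with a common exponent on finite sets, though we do not even need that). Thus it suffices to treat $\phi$ itself. The natural map is $\{h\}_{\phi_H}\mapsto\{h\}_\phi$ for $h\in H$, i.e. the inclusion $H\hookrightarrow G$ descends to twisted conjugacy classes because $h' = k\,h\,\phi_H(k^{-1})$ with $k\in H$ certainly gives $h'= k\,h\,\phi(k^{-1})$ with $k\in G$.

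Surjectivity: given $x\in G$, pick $n$ with $y:=\phi^n(x)\in H$; I claim $\{x\}_\phi=\{y\}_\phi$, so every $\phi$-class meets $H$. Indeed a standard fact (and easy to verify directly) is that $x$ and $\phi(x)$ are always $\phi$-conjugate: $\phi(x) = x^{-1}\cdot x\cdot \phi(x) $, wait — more precisely, taking $g=x$ one gets $g\,x\,\phi(g^{-1}) = x\,x\,\phi(x^{-1})$, which is not obviously $\phi(x)$; the correct choice is to note $\phi(x)=x^{-1}\,x\,\phi(x)$ and set $g=x$ in $g\, x\, \phi(g)^{-1}$ — let me instead just use $g = x$: $g\,x\,\phi(g)^{-1}=x\,x\,\phi(x)^{-1}$. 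Hmm, the clean statement is: $x\sim_\phi \phi(x)$ because with $g=x$, $x\cdot\phi(x)\cdot\phi(x)^{-1} $ — I will organize this carefully in the writeup, the key identity being that $\{x\}_\phi=\{gx\phi(g)^{-1}\}_\phi$ and choosing $g$ to telescope $x$ onto $\phi(x)$; iterating gives $\{x\}_\phi=\{\phi^n(x)\}_\phi$. Injectivity: suppose $h,h'\in H$ are $\phi$-conjugate in $G$, so $h'=g\,h\,\phi(g^{-1})$ for some $g\in G$. Choose $n$ with $\phi^n(g)\in H$. Conjugating the relation by $\phi$ repeatedly and using that $h,h'\in H$ are fixed in the sense that the relation pushes forward, one reduces to the case $g\in H$; concretely, from $h'=g h \phi(g)^{-1}$ one derives $\phi^n(h') \sim_{\phi_H} \phi^n(h)$ in $H$ via $\phi^n(g)\in H$, and then uses $h\sim_{\phi_H}\phi^n(h)$ and $h'\sim_{\phi_H}\phi^n(h')$ (the within-$H$ version of the telescoping identity, valid since $\phi(H)\subset H$) to conclude $h\sim_{\phi_H} h'$.

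The main obstacle is the injectivity step: one must be careful that the auxiliary element $g\in G$ realizing the twisted conjugacy in $G$ can be replaced by an element of $H$, and the mechanism for this is precisely to apply $\phi^n$ to push $g$ into $H$ while not losing the relation, combined with the lemma-internal fact that every element of $H$ is $\phi_H$-conjugate to all of its forward images under $\phi_H$. Once $R(\phi^n)=R((\phi_H)^n)$ is established for every $n\ge 1$, finiteness of all $R(\phi^n)$ gives finiteness of all $R((\phi_H)^n)$, both zeta functions are defined, and
$$
R_\phi(z)=\exp\left(\sum_{n=1}^\infty \frac{R(\phi^n)}{n}z^n\right)=\exp\left(\sum_{n=1}^\infty \frac{R((\phi_H)^n)}{n}z^n\right)=R_{\phi_H}(z),
$$
completing the proof.
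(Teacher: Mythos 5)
Your proposal follows the same route as the paper: every $\phi$-class in $G$ meets $H$ (surjectivity), and the intersection of a $\phi$-class with $H$ is a single $\phi_H$-class (injectivity), the latter obtained by applying $\phi^n$ to push the conjugating element $g$ into $H$ and then appealing to the telescoping fact that any element is $\phi$-conjugate to all its forward $\phi$-iterates; the zeta-function identity then follows coefficientwise. The only loose end you flagged is the verification of $x\sim_\phi\phi(x)$, and the correct choice is $g=x^{-1}$: then $g\,x\,\phi(g^{-1})=x^{-1}x\,\phi(x)=\phi(x)$, and iterating gives $x\sim_\phi\phi^n(x)$ (this is precisely the Lemma~7 of~\cite{fh} that the paper invokes); with that one line inserted your argument is complete and matches the paper's proof.
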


\begin{proof}
Let $x\in G$. Then there is an $n$ such that $\phi^n(x)\in H$.
It is known that $x$ is $\phi$-conjugate
 to $\phi^n(x)$ (see Lemma 7 \cite{fh}).
This means that the $\phi$-conjugacy class $\{x\}_\phi$
 of $x$ has non-empty intersection with $H$.

Now suppose that $x,y\in H$ are $\phi$-conjugate,
 ie. there is a $g\in G$ such that
 $$gx=y\phi(g).$$
We shall show that $x$ and $y$ are $\phi_H$-conjugate,
 ie. we can find a $g\in H$ with the above property.
First let $n$ be large enough that $\phi^n(g)\in H$.
Then applying $\phi^n$ to the above equation we obtain
 $$ \phi^n(g) \phi^n(x) = \phi^n(y) \phi^{n+1}(g). $$
This shows that $\phi^n(x)$ and $\phi^n(y)$ are $\phi_H$-conjugate.
On the other hand, one knows by Lemma 7 that $x$ and $\phi^n(x)$ are
 $\phi_H$-conjugate, and $y$ and $\phi^n(y)$ are $\phi_H$ conjugate,
 so $x$ and $y$ must be $\phi_H$-conjugate.

We have shown that the intersection with $H$ of a 
 $\phi$-conjugacy class in $G$ is a $\phi_H$-conjugacy class
 in $H$.
We therefore have a map
$$
\begin{array}{cccc}
 Rest : & \cR(\phi) & \to & \cR(\phi_H)\\
        & \{x\}_\phi & \mapsto & \{x\}_\phi \cap H
\end{array}
$$
This clearly has the two-sided inverse
$$
 \{x\}_{\phi_H} \mapsto \{x\}_\phi.
$$
Therefore $Rest$ is a bijection and $R(\phi)=R(\phi_H)$.
\end{proof}
\medskip

Let $Z(\phi)$ be one of the numbers
 $RT(\phi),  RT^f(\phi),
  RT^{ff}(\phi)$. We shal write $ \mathcal Z(\phi)$ for one of the corresponding sets $\mathcal{RT}(\phi),  \mathcal{RT}^f(\phi),
  \mathcal{RT}^{ff}(\phi)$ of equivalence classes of irreducible representations.
\begin{lem}\label{lem:subrep}
Let $\phi:G\to G$ be any endomorphism
 of  Abelian-by-finite group $G$, and let $H$ be a subgroup
 of $G$ with the properties
$$
  \phi(H) \subset H
$$
$$
  \forall x\in G \; \exists n\in \N \hbox{ such that } \phi^n(x)\in H.
$$
Then
$$
 Z(\phi) = Z(\phi_H),
$$
 where $\phi_H:H\to H$ is the restriction of $\phi$ to $H$.
 If all the numbers $Z(\phi^n)$ are finite then
 $$ 
 Z_{\phi}(z)= Z_{\phi_H}(z).
 $$
\end{lem}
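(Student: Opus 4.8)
The plan is to reduce to the subgroup formula for ordinary Reidemeister numbers (Lemma~\ref{lem:subRei}) by passing through the twisted Burnside--Frobenius theorem. First I would record two bookkeeping facts. The subgroup $H$ is again Abelian-by-finite: if $A\trianglelefteq G$ is abelian of finite index, then $A\cap H\trianglelefteq H$ is abelian of finite index in $H$. And the pair $(H,\phi_H)$ satisfies, \emph{for every iterate}, the hypotheses of Lemma~\ref{lem:subRei}: from $\phi(H)\subset H$ we get $\phi^n(H)\subset H$, and since for each $x\in G$ there is a $k$ with $\phi^k(x)\in H$ — hence $\phi^{k'}(x)\in H$ for all $k'\ge k$ — for each $n$ there is an $m$ with $(\phi^n)^m(x)=\phi^{nm}(x)\in H$. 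So Lemma~\ref{lem:subRei} applies verbatim to $\phi^n$ in place of $\phi$, giving $R(\phi^n)=R((\phi_H)^n)$ for all $n$.

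Next I would invoke the twisted Burnside--Frobenius theorem for Abelian-by-finite groups (TBFT, and its finite-dimensional and finite versions TBFT$_f$, TBFT$_{ff}$; see \cite{RJMP, crelle}), applied both to $\phi$ and all its iterates $\phi^n$ and to $\phi_H$ and all $(\phi_H)^n$, which are endomorphisms of the Abelian-by-finite group $H$. Proposition~\ref{prop:tbftimplcoin} then yields $Z(\phi^n)=R(\phi^n)$ and $Z((\phi_H)^n)=R((\phi_H)^n)$ for every $n$ (for $Z=RT$ one uses TBFT, for $Z\in\{RT^f,RT^{ff}\}$ one uses TBFT$_f$; here one also uses that for an Abelian-by-finite group the relevant irreducible representations are finite, so the three counts coincide). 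Combining with the previous paragraph, $Z(\phi^n)=R(\phi^n)=R((\phi_H)^n)=Z((\phi_H)^n)$ for all $n$. Taking $n=1$ gives $Z(\phi)=Z(\phi_H)$; and if all the numbers $Z(\phi^n)$ are finite, then by the above they equal the $Z((\phi_H)^n)$, so the two power series $\sum_n Z(\phi^n)z^n/n$ and $\sum_n Z((\phi_H)^n)z^n/n$ agree and hence $Z_\phi(z)=Z_{\phi_H}(z)$.

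The main obstacle is the invocation of TBFT itself. The results quoted above are stated for endomorphisms of polycyclic groups and for automorphisms of polycyclic-by-finite groups, so one must check that the version actually needed here — for the possibly non-invertible $\phi$, and in particular for its restriction $\phi_H$ to the (not necessarily normal) subgroup $H$ — is genuinely covered by \cite{RJMP, crelle}. If it is not, the fallback is to argue directly in the spirit of Lemma~\ref{lem:subRei}: construct a restriction map $\mathcal Z(\phi)\to\mathcal Z(\phi_H)$ sending a $\hat\phi$-fixed irreducible representation $[\rho]$ of $G$ to an irreducible constituent of $\rho|_H$, show it is well defined (using Lemma~\ref{lem:keylem} and Clifford theory for the finite quotient $G/A$) and that it is a bijection, parallel to the map $Rest$. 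I expect the TBFT route to be the intended one, with the subgroup-restriction compatibility being the point that requires the most care.
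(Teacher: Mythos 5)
Your instinct that TBFT might not apply here is the crucial one, and it turns out to be fatal for the route you actually commit to. The lemma is stated for an arbitrary Abelian-by-finite group $G$, and the abelian normal subgroup of finite index need not be finitely generated, so $G$ need not be polycyclic-by-finite (nor need $H$). The TBFT results you cite from \cite{RJMP,crelle} are proved for endomorphisms of polycyclic groups and automorphisms of polycyclic-by-finite groups, and the versions in \cite{fh} are for finitely generated abelian groups and finite groups; none of these cover the hypotheses here. In addition, Proposition~\ref{prop:tbftimplcoin} converts $RT$ (resp.\ $RT^f$, $RT^{ff}$) to $R$ only under the standing assumption $R(\phi^n)<\infty$, whereas the first assertion $Z(\phi)=Z(\phi_H)$ in the lemma carries no finiteness hypothesis. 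So even where TBFT is available, your argument would give the lemma only under extra assumptions. (Also, for an Abelian-by-finite group the irreducible unitary representations are all finite-dimensional, so $RT=RT^f$, but they need not factor through finite quotients, so the claim that all three counts coincide is not automatic.)

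What the paper actually does is your ``fallback'': it constructs the map $Rest:\mathcal Z(\phi)\to\mathcal Z(\phi_H)$ directly, by restriction $\rho\mapsto\rho_H$, and shows it is a bijection, exactly parallel to the proof of Lemma~\ref{lem:subRei}. Concretely: starting from $\rho\in\mathcal Z(\phi)$ with intertwiner $M$ (so $\rho\circ\phi=M\rho M^{-1}$), one shows $\rho_H$ is still irreducible (if $V=V_1\oplus V_2$ were an $H$-decomposition, pick $g$ with $\rho(g)V_1\not\subset V_1$, push it into $H$ via $\phi^n$, and use $M^nV_1=V_1$ to get a contradiction), and clearly $[\rho_H]=[\rho_H\circ\phi_H]$. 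In the other direction, given $\rho_H\in\mathcal Z(\phi_H)$ with intertwiner $M$, one extends by $\bar\rho(g):=M^{-n}\rho(\phi^n(g))M^{n}$ for $n$ large enough that $\phi^n(g)\in H$, checks independence of $n$ and of the choice of $M$, and verifies $\bar\rho$ is an irreducible representation in $\mathcal Z(\phi)$ with $Rest(\bar\rho)=\rho_H$. This direct bijection yields $Z(\phi)=Z(\phi_H)$ unconditionally, and applying it to every iterate $\phi^n$ (which, as you correctly observe, inherits the two hypotheses) gives the zeta-function identity. You should commit to this argument rather than routing through TBFT.
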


\begin{proof}

All irreducible representations of Abelian-by-finite group $G$ are finite dimensional.

Let $\rho : G\rightarrow U(V)$ be irreducible representation, and suppose that there is a
matrix $M \in U(V)$ with
$$\rho\circ\phi= M\cdot\rho\cdot M^{-1},$$ i.e $ \rho \in\mathcal Z(\phi).$
Suppose  that $\rho_H$, the restriction of $\rho$ to $H$, is a reducible representation i.e.
there is a decomposition $ V=V_1\oplus V_2$ into $H$-modules. We shall derive a contradiction.
We can find $g\in G$ such that $\rho(g)V_1 \not\in V_1$. However for sufficiently large $n$ we have
$\phi^n(g)\in H$. This shows that $\rho(H)M^nV_1 \not\in M^nV_1$. However since $H$ is $\phi$-invariant
 and $V_1$ is $H$-invariant, $M^nV_1=V_1$. Therefore $\rho(H)M^nV_1 \not\in V_1$, which gives us a
 contradiction. Consequently $\rho$ must be an irreducible representation of $H$ on $M^nV$. However
 $M^nV=V$, so the representation $\rho_H$ is irreducible. Clearly the class of $\rho_H$ is the same as the
 class of $\rho_H\circ\phi_H$, i.e $ \rho_H \in\mathcal Z(\phi_H).$ We thus have a map
 $$ Rest: \mathcal Z(\phi)\rightarrow  \mathcal Z(\phi_H), \\ 
 \rho  \rightarrow \rho_H .$$ 
 Now let $ \rho_H \in\mathcal Z(\phi_H)$ be given. Then there is a matrix $M$ such that
$$\rho\circ\phi_H= M\cdot\rho\cdot M^{-1}.$$
If $M'$ is any other such matrix then $ M'\cdot M^{-1}$ commutes with $ \rho_H(x)$  for all $x$.        
It follows that for $g\in \phi^{-n}(H)$ the element
$$ M^{-n}\cdot\rho(\phi^n(g))\cdot M^{n}$$  is independent of the chosen $M$, and depends only on $\rho, g$
and $n$. Now suppose that $\phi^n(g)=h_1\in H$ and $\phi^m(g)=h_2\in H, m > n$.
Then $\phi^{m-n}(h_1)=h_2$, which implies
$$M^{m-n}\cdot\rho(h_1)\cdot M^{n-m}=\rho(h_2),$$
and therefore 
$$M^{-n}\cdot\rho(\phi^n(g))\cdot M^{n}=M^{-m}\cdot\rho(\phi^m(g))\cdot M^{m}.$$
The above expression is thus independent of $M$ and $n$, and depends only on $\rho$
and $g$. We may therefore define for $g\in G$

$$\bar\rho(g)=M^{-n}\cdot\rho(\phi^n(g))\cdot M^{n}$$
where $n$ is large enough that $\phi^n(g)\in H$. One can easily checks that $\bar\rho$ is a
representation of $G$. Since $\rho_H$ is irreducible it follows that $\bar\rho$ is irreducible.
One sees immediately that  the class of $\bar\rho$ is the same as the
 class of $\bar\rho\circ\phi$, i.e $ \rho \in\mathcal Z(\phi).$
 Finally we have $$Rest(\bar\rho)=\rho$$ and since any other extension $\tilde\rho$ of $\rho$ to $G$
 such that $\tilde\rho \in \mathcal Z(\phi)$ 
 must satisfy $$\tilde\rho(g)=M^{-n}\cdot\rho(\phi^n(g))\cdot M^{n},$$ we have 
 $$\overline{Rest(\rho)}=\rho.$$
 
 This shows that Rest is a  bijection, so $\mathcal Z(\phi)=  \mathcal Z(\phi_H).$
 \end{proof}

\begin{cor}
 Let $H=\phi^n(G)$. Suppose that all the numbers $R(\phi^k)$  and $Z(\phi^k)$, $k\in N$ are finite. Then $$R(\phi) = R(\phi_H), Z(\phi) = Z(\phi_H),
 R_\phi(z) = R_{\phi_H}(z), Z_{\phi}(z)=Z_{\phi_H}(z)$$.
\end{cor}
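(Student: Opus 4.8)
The plan is to obtain the corollary as an immediate application of Lemmas~\ref{lem:subRei} and~\ref{lem:subrep} to the subgroup $H=\phi^n(G)$; essentially all that has to be done is to verify that this particular $H$ satisfies the two hypotheses shared by those lemmas, namely $\phi(H)\subset H$ and the property that every element of $G$ is eventually carried into $H$ by iterates of $\phi$.

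First I would check $\phi(H)\subset H$: since $\phi(G)\subset G$, we get $\phi(\phi^n(G))=\phi^n(\phi(G))\subset\phi^n(G)=H$, so the restriction $\phi_H:H\to H$ is well defined. Next, for any $x\in G$ we have $\phi^n(x)\in\phi^n(G)=H$, so the second hypothesis holds with the single exponent $n$, uniform in $x$. Hence both hypotheses of Lemma~\ref{lem:subRei} are met, giving $R(\phi)=R(\phi_H)$, and, since all $R(\phi^k)$ are assumed finite, $R_\phi(z)=R_{\phi_H}(z)$. For the representation-theoretic statements I would apply Lemma~\ref{lem:subrep} to the same $H$ (staying within the Abelian-by-finite setting in which $Z$ and that lemma are considered): the hypotheses just checked are exactly the ones required, so $\mathcal{Z}(\phi)=\mathcal{Z}(\phi_H)$, hence $Z(\phi)=Z(\phi_H)$, and the finiteness of all $Z(\phi^k)$ then yields $Z_\phi(z)=Z_{\phi_H}(z)$.

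If one wants the zeta-function identities rather than merely the identities of the individual Reidemeister-type numbers, the lemmas must be applied to every iterate $\phi^k$ as well, and this is routine: because $H$ is $\phi$-invariant one has $(\phi_H)^k=(\phi^k)|_H=(\phi^k)_H$, and $H$ satisfies the two hypotheses with respect to $\phi^k$ too, since $\phi^k(H)=\phi^{n+k}(G)\subset\phi^n(G)=H$ and $(\phi^k)^n(x)=\phi^{kn}(x)\in\phi^{kn}(G)\subset\phi^n(G)=H$. Thus $R(\phi^k)=R((\phi_H)^k)$ and $Z(\phi^k)=Z((\phi_H)^k)$ for all $k$, whence the power series defining the two pairs of zeta functions agree termwise.

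I do not expect a substantive obstacle here: the corollary is a bookkeeping consequence of the two preceding lemmas. The only points requiring a little care are the verification that $\phi^n(G)$ is stable under $\phi$ and absorbs $G$ under iteration, and — for the $Z$-statements — the fact that one must remain within the Abelian-by-finite hypothesis of Lemma~\ref{lem:subrep} (and, correspondingly, within the class of groups for which the numbers $Z(\phi^k)$ are defined and finite).
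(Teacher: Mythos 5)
Your proof is correct and is exactly the intended argument: the paper leaves this corollary without proof because it is an immediate specialization of Lemmas~\ref{lem:subRei} and~\ref{lem:subrep} to $H=\phi^n(G)$, and you verify the two shared hypotheses ($\phi(H)\subset H$ and eventual absorption) and note the needed abelian-by-finite hypothesis for the $Z$-statements. The only superfluous part is re-verifying the zeta-function identity via the iterates, since both lemmas already assert $R_\phi(z)=R_{\phi_H}(z)$ (resp.\ $Z_\phi(z)=Z_{\phi_H}(z)$) under the finiteness assumption.
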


 \begin{teo}\label{teo:main1}
Let $\phi:G\to G$ be any endomorphism
 of  Abelian-by-finite group $G$, and let $H$ be a subgroup
 of $G$ with the properties
$$
  \phi(H) \subset H
$$
$$
  \forall x\in G \; \exists n\in \N \hbox{ such that } \phi^n(x)\in H.
$$
Suppose that all the numbers $R(\phi^k)$  and $Z(\phi^k)$, $k\in N$ are finite.
If one  of the following conditions is satisfied:
 \begin{itemize}
\item [1.] $H$ is a finitely generated abelian group, or
\item [2.] $H$ is a finite group, or
\item [3.] $H$ is a crystallographic group 
with diagonal holonomy $\Z_2$ and $\phi_H$ is an automorphism,
\end{itemize}
then 
 $$ 
 R_\phi(z) = R_{\phi_H}(z)=Z_{\phi_H}(z)=Z_{\phi}(z)
 $$
 and  these zeta functions are rational functions.
 
\end{teo}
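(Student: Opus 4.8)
The plan is to assemble the statement by chaining three zeta-function identities, each of which is already available: the reduction of Reidemeister numbers to $H$ (Lemma~\ref{lem:subRei}), the reduction of the representation-theoretic counts $Z$ to $H$ (Lemma~\ref{lem:subrep}), and, on $H$ itself, the identification $R_{\phi_H}(z)=Z_{\phi_H}(z)$ furnished by the twisted Burnside--Frobenius theorem through Proposition~\ref{prop:tbftimplcoin}.

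First I would note that $\phi$ and $H$ satisfy verbatim the hypotheses of both Lemma~\ref{lem:subRei} and Lemma~\ref{lem:subrep}; for the latter one also uses that a subgroup of an Abelian-by-finite group is again Abelian-by-finite (if $A\trianglelefteq G$ is abelian with $G/A$ finite, then $H\cap A$ is abelian and normal in $H$ with $H/(H\cap A)$ embedding into $G/A$). Since all $R(\phi^n)$ and all $Z(\phi^n)$ are assumed finite, these two lemmas give immediately $R_\phi(z)=R_{\phi_H}(z)$ and $Z_\phi(z)=Z_{\phi_H}(z)$, the second because Lemma~\ref{lem:subrep} actually produces a bijection $\mathcal Z(\phi^n)\cong\mathcal Z(\phi_H^n)$ for every $n$. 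It then remains only to prove $R_{\phi_H}(z)=Z_{\phi_H}(z)$ and that this common function is rational, and here the argument splits into the three cases.

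In case~1, $H$ is finitely generated abelian, so $\wh H=\wh H_f=\wh H^{\phi_H}=\wh H^{\phi_H}_f$, the twisted Burnside--Frobenius theorem (TBFT, hence TBFT$_f$) holds for $\phi_H$ and all its iterates, and Proposition~\ref{prop:tbftimplcoin} gives $R_{\phi_H}(z)=RT_{\phi_H}(z)=RT^f_{\phi_H}(z)=RT^{ff}_{\phi_H}(z)$, i.e. $R_{\phi_H}(z)=Z_{\phi_H}(z)$; rationality is then Theorem~\ref{fingenabelian}, which moreover identifies this function with $L_{\wh{\phi_H}}(\sigma z)^{(-1)^r}$. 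In case~2, $H$ is finite, so $\wh H=\wh H_f=\wh H_{ff}$ is a finite set, the classical Burnside--Frobenius theorem supplies TBFT for $\phi_H$ and its iterates, Proposition~\ref{prop:tbftimplcoin} again gives $R_{\phi_H}(z)=Z_{\phi_H}(z)$, and rationality follows from Theorem~\ref{finite} (the relevant subspaces of $\wh H$ being finite). In case~3, $H$ is crystallographic with diagonal holonomy $\Z_2$ and $\phi_H$ is an automorphism; then $H$ is polycyclic-by-finite, so by \cite{RJMP, crelle} the twisted Burnside--Frobenius theorem (TBFT$_f$ and TBFT$_{ff}$) holds for the automorphism $\phi_H$, whence $R_{\phi_H}(z)=RT^f_{\phi_H}(z)=RT^{ff}_{\phi_H}(z)=Z_{\phi_H}(z)$ --- precisely the content recalled in the proof of Theorem~\ref{teo:nil} --- while rationality of $R_{\phi_H}(z)$ is the theorem of \cite{DekTerBus}. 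Chaining the equalities yields $R_\phi(z)=R_{\phi_H}(z)=Z_{\phi_H}(z)=Z_\phi(z)$, a rational function.

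The bookkeeping that every power $\phi^n$ still satisfies the two displayed hypotheses (clear, since $\phi^n(H)\subset H$ and, for each $x$, $\phi^{nm}(x)\in H$ as soon as $nm$ exceeds the first exponent carrying $x$ into $H$) is routine; in fact Lemmas~\ref{lem:subRei} and~\ref{lem:subrep} already package the zeta-function statements, so nothing extra is needed there. The real input --- and the step requiring the most care to cite cleanly --- is case~3: one must invoke the twisted Burnside--Frobenius theorem for automorphisms of polycyclic-by-finite groups from \cite{RJMP, crelle}, check that its hypotheses ($\phi_H$ an automorphism, all Reidemeister numbers finite) are met, and stay within the versions of $Z$ (namely $RT^f$ and $RT^{ff}$) for which that theorem is established at this level of generality; for the variant $Z=RT$ one additionally needs that, under the standing finiteness assumption, every $\wh\phi_H^{\,n}$-\textbf{f}-point of a polycyclic-by-finite group is finite-dimensional, so that $RT=RT^f$ in that setting.
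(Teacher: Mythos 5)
Your proof is correct and follows essentially the same route as the paper: chain Lemma~\ref{lem:subRei} and Lemma~\ref{lem:subrep} to reduce to $H$, invoke the twisted Burnside--Frobenius theorem on $H$ (via \cite{fh} in cases~1--2 and \cite{RJMP, crelle} in case~3) to get $R_{\phi_H}(z)=Z_{\phi_H}(z)$, and cite the known rationality results for each class of $H$. The extra observation that a subgroup of an Abelian-by-finite group is Abelian-by-finite is harmless but unnecessary, since Lemma~\ref{lem:subrep} already hypothesizes this only of $G$; and your remark that $RT=RT^f$ for the polycyclic-by-finite case mirrors the paper's comment that such $H$ has only finite-dimensional irreducible representations.
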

\begin{proof}
 TBFT (resp., TBF$T_f$, TBF$T_{ff}$) for
an endomorphism $\phi:G\to G$ and its iterations were proven in \cite{fh} for finitely generated
abelian groups and for  finite groups.
Any crystallographic group 
with diagonal holonomy $\Z_2$ is 
a polycyclic-by-finite group and it has only finite dimensional irreducible representations. In \cite{RJMP, crelle} twisted Burnside-Frobenius theorem($TBFT_f$ and $TBFT_{ff}$) was proven for automorphisms of polycyclic-by-finite groups.

 These results implie equality of the Reidemeister zeta function  $R_{\phi_H}(z)$ and the zeta function $Z_{\phi_H}(z)$.
Hence from Lemma \ref{lem:subRei} and Lemma \ref{lem:subrep} it follows
that
$$ 
 R_\phi(z) = R_{\phi_H}(z)=Z_{\phi_H}(z)=Z_{\phi}(z).
 $$

In \cite{fh} the rationality of the Reidemeister zeta function $R_\phi(z)$
was proven for endomorphisms of finitely generated abelian groups and for finite groups and in
\cite{DekTerBus} the rationality of $R_\phi(z)$ was proven for  automorphisms of almost-crystallographic groups with diagonal holonomy $Z_2$. This completes the proof.

\end{proof}
\medskip
\subsection{ Reduction to Injective Endomorphisms on quotient groups}
Let $G$ be a group and $\phi : G\rightarrow G$ an endomorphism.
We shall call an element $x\in G $ nilpotent if there is
an $n\in \N $ such that $\phi^n(x)= e$. 

Let $N$ be the set 
of all nilpotent elements of $G$.

 Let $Z(\phi)$ be one of the numbers
 $  RT^f(\phi),
  RT^{ff}(\phi)$ and  $ \mathcal Z(\phi)$ one of  the corresponding sets of equivalence classes of irreducible representations.
 
 \begin{teo}
The set $N$ is a normal subgroup of $G$. We have $ \phi(N) \subset N$ and 
$\phi^{-1}(N)=N$. 
Thus $\phi$ induces an endomorphism $[\phi/N]$ of the quotient group $G/N$ given by $[\phi/N](xN):=\phi(x)N$.
The endomorphism  $[\phi/N]: G/N \rightarrow G/N$ is injective,
and we have 
$$
R(\phi) = R([\phi/N]),\,\, \,\,Z(\phi) = Z([\phi/N]).
$$
Let the numbers $R(\phi^n)$ and  $Z(\phi^n)$ be all finite. 
Then
$$
R_\phi(z) = R_{[\phi/N]}(z), \,\, \,\, Z_{\phi}(z)=Z_{[\phi/N]}(z).
$$
If the quotient group $G/N$ is polycyclic then one has the following Gauss congruences
for Reidemeister numbers:
 $$
 \sum_{d\mid n} \mu(d)\cdot R(\phi^{n/d}) \equiv 0 \mod n
 $$
for any $n$.
If  one of  the following conditions is satisfied:
 \begin{itemize}
\item [1.] The quotient group $G/N$ is a finitely generated abelian group, or
\item [2.] $G/N$ is a finite group, or
\item [3.] $G/N$ is  a finitely generated torsion free nilpotent group, or
\item [4.] $G/N$ is a crystallographic group 
with diagonal holonomy $\Z_2$ and $[\phi/N]$ is an automorphism,
\end{itemize}
then 
 $$ 
 R_\phi(z) = R_{[\phi/N]}(z)=Z_{[\phi/N]}(z)=Z_{\phi}(z)
 $$
 and  these zeta functions are rational functions.
\end{teo}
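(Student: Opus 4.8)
The plan is to assemble the theorem from three previously established pieces: the structural lemma about the nilpotent radical of an endomorphism (the analogue of the classical Reidemeister reduction), the representation-theoretic reduction of Lemma~\ref{lem:subrep} together with Lemma~\ref{lem:subRei}, and the case-by-case rationality results already recorded in Theorem~\ref{teo:main1} and Theorem~\ref{teo:nil}. The first task is to prove the structural statement that $N$, the set of elements $x$ with $\phi^n(x)=e$ for some $n$, is a normal subgroup with $\phi(N)\subset N$ and $\phi^{-1}(N)=N$. For normality and the subgroup property one argues directly: if $\phi^n(x)=e$ and $\phi^m(y)=e$ then $\phi^{\max(n,m)}(xy^{-1})=e$, and $\phi^n(gxg^{-1})=\phi^n(g)\phi^n(x)\phi^n(g)^{-1}$ is eventually trivial whenever $\phi^n(x)$ is. The inclusion $\phi(N)\subset N$ is immediate, and $\phi^{-1}(N)=N$ is the key point that makes $[\phi/N]$ \emph{injective}: if $\phi(x)N = N$ then $\phi(x)\in N$, so $\phi^{n+1}(x)=e$ for some $n$, hence $x\in N$; conversely $N\subset \phi^{-1}(N)$ is clear. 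Injectivity of $[\phi/N]$ then follows since $[\phi/N](xN)=N$ forces $\phi(x)\in N$, i.e. $x\in N$.

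**Next I would** apply the subgroup-reduction machinery. Since $N$ is $\phi$-invariant and every element of $G$ is killed by some power of $\phi$, the subgroup $H:=N$ does \emph{not} satisfy the hypotheses of Lemma~\ref{lem:subRei} directly (that lemma moves things \emph{into} $H$); instead the correct reduction is to the quotient, and one invokes the quotient-group version of the Reidemeister reduction: because $N$ consists of $\phi$-nilpotent elements, the projection $G\to G/N$ induces a bijection on twisted conjugacy classes, giving $R(\phi)=R([\phi/N])$ and, passing to iterates, $R_\phi(z)=R_{[\phi/N]}(z)$. This is the exact analogue, for quotients by the nilpotent radical, of the argument in Lemma~\ref{lem:subRei}; the proof is the standard verification that $\{x\}_\phi \mapsto \{xN\}_{[\phi/N]}$ is well-defined with two-sided inverse, using that $x$ is $\phi$-conjugate to $\phi^k(x)$ for all $k$. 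The corresponding statement $Z(\phi)=Z([\phi/N])$ and $Z_\phi(z)=Z_{[\phi/N]}(z)$ for the representation-theory counts follows the same way, observing that for the groups in question (Abelian-by-finite, or with the quotient in the listed classes) all irreducible representations are finite-dimensional and a representation of $G$ with $\rho\sim\rho\circ\phi$ must be trivial on $N$ — because $\rho(\phi^n(g))$ is conjugate to $\rho(g)$ by a fixed matrix, so if $\phi^n(g)=e$ then $\rho(g)$ is conjugate to the identity, hence equal to it — which gives the desired bijection between the relevant representation sets of $G$ and of $G/N$.

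**Then** the Gauss congruences for $R(\phi^n)$ when $G/N$ is polycyclic follow by combining the identity $R(\phi^n)=R([\phi/N]^n)$ with the twisted Burnside--Frobenius theorem ($TBFT_f$) for polycyclic groups proved in \cite{RJMP, crelle}: this identifies $R([\phi/N]^n)$ with the number of $\wh{[\phi/N]}^n$-\textbf{f}-points, i.e. with $RT^f([\phi/N]^n)$, which by Lemma~\ref{lem:keylem} and Corollary~\ref{cor:periodicanddyn} is the number of $n$-periodic points of a genuine dynamical system on $\wh{(G/N)}^{[\phi/N]}_f$; Theorem~\ref{teo:congrue_rep_reide} (the M\"obius-inversion congruences) then gives $\sum_{d\mid n}\mu(d)R(\phi^{n/d})\equiv 0 \bmod n$. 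Finally, under each of the four listed conditions on $G/N$, the equality $R_{[\phi/N]}(z)=Z_{[\phi/N]}(z)$ holds by $TBFT$ (respectively $TBFT_f$, $TBFT_{ff}$) — for finitely generated abelian and finite groups by \cite{fh}, for finitely generated torsion-free nilpotent groups and crystallographic groups with diagonal holonomy $\Z_2$ by the results cited in Theorem~\ref{teo:nil} — and rationality of $R_{[\phi/N]}(z)$ is known in each of these cases (\cite{fh} for the abelian and finite cases, \cite{Fel00} for torsion-free nilpotent, \cite{DekTerBus} for the crystallographic case), so chaining the four equalities $R_\phi(z)=R_{[\phi/N]}(z)=Z_{[\phi/N]}(z)=Z_\phi(z)$ yields the theorem.

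**The main obstacle** I expect is not any single computation but the verification that $\rho\sim\rho\circ\phi$ forces $\rho|_N$ to be trivial in sufficient generality — one must be careful that $N$ need not be finitely generated and that $\phi$ restricted to $N$ is highly non-injective, so the argument has to be phrased purely in terms of the conjugating matrix $M$ with $\rho\circ\phi = M\rho M^{-1}$ and the fact that $\phi^n(x)=e$ implies $M^{-n}\rho(x)M^n$ — wait, more precisely $\rho(\phi^n(x)) = M^n\rho(x)M^{-n}$, so $\phi^n(x)=e$ gives $\rho(x)=M^{-n}\,\mathrm{id}\,M^n=\mathrm{id}$; this is clean but relies on finite-dimensionality of $\rho$, which is exactly why the hypothesis on $G/N$ (and hence implicitly on the relevant representations) is needed. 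A secondary point requiring care is that Lemma~\ref{lem:subrep} is stated for subgroups of Abelian-by-finite groups, whereas here we pass to a quotient; one should either reprove the representation-set bijection directly for the quotient map $G\to G/N$ (which is in fact easier, since representations of $G/N$ pull back to representations of $G$ and the nilpotent-triviality above makes the correspondence inverse to each other), or note that the quotients $G/N$ in conditions (1)--(4) are themselves in the scope where the needed $TBFT$ variants are available.
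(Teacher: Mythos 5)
Your proposal is correct and follows essentially the same route as the paper: prove the structural facts about $N$ directly, establish a bijection on twisted conjugacy classes via the projection $G\to G/N$ using that $x$ is $\phi$-conjugate to $\phi^k(x)$, establish the corresponding bijection on $\phi$-fixed representation classes, and then invoke $TBFT$ together with the known rationality results case by case. In fact your observation that $\rho\circ\phi=M\rho M^{-1}$ forces $\rho(x)=M^{-n}\rho(\phi^n(x))M^{n}=M^{-n}IM^{n}=I$ for $x\in N$ fills in a step that the paper's proof of part (vi) asserts without justification, so your account is if anything slightly more complete than the original.
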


\begin{proof}

(i) Let $x\in N, g\in G$. Then for some $n\in \N$ we have $\phi^n(x)= e $.
Therefore $\phi^n(gxg^{-1})=\phi^n(gg^{-1})= e$. This shows that $gxg^{-1}\in N$
so $N$ is a normal subgroup of $G$.

(ii) Let  $x\in N$ and choose $n$ such that $\phi^n(x)=e$. Then  $\phi^{n-1}(\phi(x))=e$ so $\phi(x)\in N$. Therefore  $ \phi(N) \subset N$.

(iii) If  $\phi(x)\in N$ then there is an $n$ such that $\phi^n(\phi(x))=e$. Therefore $\phi^{n+1}(x)=e$ so  $x\in N$. This show that
 $\phi^{-1}(N)\subset N$. The converse inclusion follows from (ii).

(iv) We shal write $ \cR(\phi)$ for the set of $\phi$-conjugacy classes
of elements of $G$. We shall now  show that the map $x\rightarrow xN$
induces a bijection  $ \cR(\phi)\rightarrow \cR([\phi/N])$.
Suppose $x,y\in G$ are $\phi$-conjugate. Then there is a $g\in G$ with
$gx=y\phi(g).$ Projecting to the quotient group $G/N$ we have
$gnxN=yN\phi(g)N$, so $gNxN=yN[\phi/N](gN)$. This means that $xN$ and $yN$
are $[\phi/N]$-conjugate in $G/N$. Conversely suppose that $xN$ and $yN$
are  $[\phi/N]$-conjugate in $G/N$. Then there is a $gN \in G/N$
such that  $gNxN=yN[\phi/N](gN)$. In other words $gx\phi(g)^{-1}y^{-1})=e.$
Therefore $\phi^n(g)\phi^n(x)=\phi^n(y)\phi^n(\phi(g)).$

This shows that $\phi^n(x)$ and  $\phi^n(y)$ are $\phi$-conjugate.
However  $x$ and  $\phi^n(x)$ are $\phi$-conjugate as are
 $y$ and  $\phi^n(y)$. Therefore $x$ and $y$ are  $\phi$-conjugate.

(v) We have shown that $x$ and $y$ are $\phi$-conjugate iff $xN$ and $yN$
are  $[\phi/N]$-conjugate. From this it follows that $x\rightarrow xN$
induces a  bijection from  $ \cR(\phi)$ to $ \cR([\phi/N])$.
Therefore $R(\phi)=R([\phi/N])$.

(vi) We shall now show that $Z(\phi)=Z([\phi/N])$. Let $\rho \in \mathcal Z(\phi)$
and let $M$ be a transformation for which
$$ \rho\circ\phi= M\cdot\rho\cdot M^{-1}$$

if $x\in N$ then there is an $n\in \mathbb N$ with $\phi^n(x)=e$. Thus $ N$ is contained in the kernel of $\rho$ and there is a representation $[\rho/N]$ of $G/N$ given by
$$[\rho/N](gN):= \rho(g).$$
Since $[\rho/N]$  satisfies identity 
 $$ [\rho/N]\circ[\phi/N]= M\cdot[\rho/N]\cdot M^{-1},$$
we have $[\rho/N]\in \mathcal Z([\phi/N])$.

(vii) Conversely if $\rho \in \mathcal Z([\phi/N])$ we may construct a $\bar\rho \in \mathcal Z(\phi)$ by
$$ \bar\rho(x):= \rho(xN).$$
It is clear that 
$ \overline{[\rho/N]}=\rho$ and $ \bar\rho/N = \rho$.

In \cite{RJMP, crelle} the  twisted Burnside-Frobenius theorem($TBFT_f$ and $TBFT_{ff}$) was proven for endomorphisms of polycyclic groups  and for automorphisms of polycyclic-by-finite groups. 
Then from (v) and (vi) it follows that 
$$R(\phi^n)=R([\phi/N]^n)=Z([\phi/N]^n)= Z(\phi^n).$$
 Gauss congruences now follow from Corollary \ref{cor:periodicanddyn} and the
general theory of congruences for periodic points
(cf. \cite{Smale,Z}).
More precisely, let $P_d$ be the number of 
periodic points of least period $d$
of the dynamical system of Corollary \ref{cor:periodicanddyn}.
Then $R(\phi^n)= Z(\phi^n)=\sum\limits_{d|n} P_d$. 

By the 
M\"obius inversion formula,
$$
\sum\limits_{d|n} \mu(d) R(\f^{n/d})=P_n \equiv 0
\mod n,
$$
since number $P_n$ is always divisible by $n$,
because $P_n$ is exactly $n$ times the number of
 orbits of cardinality $n$.

Twisted Burnside-Frobenius theorem($TBFT_f$ and $TBFT_{ff}$) implies also equality of Reidemeister zeta function  $R_\phi(z)$ and zeta function $RT^f_\phi(z)=RT^{ff}_\phi(z))$. In \cite{Fel00} the rationality of the Reidemeister zeta function $R_\phi(z)$
was proven for endomorphisms of finitely generated abelian groups and for endomorphisms of  finitely generated  torsion free nilpotent  groups, and in
\cite{DekTerBus} the rationality of $R_\phi(z)$ was proven for  automorphisms of crystallographic groups with diagonal holonomy $Z_2$. This completes the proof.

\end{proof}

\section{P\'olya -- Carlson dichotomy for Reidemeister zeta function}

In this section we present results  in support of
a P{\'o}lya--Carlson dichotomy between rationality and a
natural boundary for the analytic behaviour of  the Reidemeister zeta function
of Abelian group endomorphisms.

Let $\phi: G \rightarrow G$  be a endomorphism of a  countable  Abelian group  $G$
that is a subgroup  of~$\mathbb{Q}^d$, where $d\geqslant 1$.
Let  $R=\mathbb{Z}[t]$ be a polynomial  ring. Then the Abelian group $G$ naturally carries the structure  of a $R$-module over the ring $R=\mathbb{Z}[t]$  where multiplication by~$t$
corresponds to application of the endomorphism:
$ tg=\phi(g)$  and extending this in a natural way to polynomials.
That is, for $g\in G$ and $ f=\sum_{n\in \mathbb{Z} }c_n t^n \in R=\mathbb{Z}[t]$ set
$$ fg= \sum_{n\in \mathbb{Z} }c_n t^ng=\sum_{n\in \mathbb{Z} }c_n \phi^n(g),$$
where all but finitely many $c_n\in \mathbb{Z}$ are zero.
This is a standard procedure for the
study of dual automorphisms of compact Abelian
groups, see Schmidt~\cite{Sch} for an overview.

 Let us now consider the Pontryagin dual  group $\wh G$ and dual endomorphism
 $\widehat{\phi}:\rho\mapsto \rho\circ\phi$ on the  $\wh G$.
 
We shall require the following statement:
 
\begin{lem}\cite{fh}\label{dual}
Let $\phi:G\to G$ be an endomorphism of an Abelian group $G$.
Then the kernel $\ker\left[\hat{\phi}:\hat{G}\to\hat{G}\right]$
is canonically isomorphic to the
Pontryagin dual of $\coker\phi$.
\end{lem}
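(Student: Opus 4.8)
The plan is to unwind both sides as concrete groups of characters, exhibit the isomorphism directly, and then recognize it as the map produced by the (contravariant, exact) Pontryagin duality functor, which makes canonicity transparent.

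First I would write $\hat G=\Hom(G,\T)$ with $\T=\R/\Z$, so that $\hat\phi(\rho)=\rho\circ\phi$. A character $\rho$ lies in $\Ker[\hat\phi:\hat G\to\hat G]$ precisely when $\rho\circ\phi$ is the trivial character, i.e. when $\rho$ vanishes on the subgroup $\Im\phi=\phi(G)\subset G$. By the universal property of the quotient, every such $\rho$ factors uniquely as $\rho=\bar\rho\circ\pi$, where $\pi:G\ola{}\coker\phi$ is the projection and $\bar\rho:\coker\phi\to\T$ is a character; conversely $\bar\rho\circ\pi$ kills $\Im\phi$ for any $\bar\rho\in\widehat{\coker\phi}$. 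The assignment $\rho\mapsto\bar\rho$ is therefore a bijection $\Ker\hat\phi\to\widehat{\coker\phi}$, and it is visibly a group homomorphism (pointwise operations on both sides), hence an isomorphism of abstract groups.

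Then I would upgrade this to an isomorphism of topological groups and record its naturality. Since $G$ is discrete and countable, $\hat G$ is compact with the topology of pointwise convergence, $\Ker\hat\phi$ carries the subspace topology, $\coker\phi$ is again discrete, and $\widehat{\coker\phi}$ carries its compact--open topology; both $\rho\mapsto\bar\rho$ and its inverse $\bar\rho\mapsto\bar\rho\circ\pi$ are continuous because precomposition with the fixed map $\pi$ is continuous for pointwise convergence. Equivalently, and more conceptually, applying the functor $\Hom(-,\T)$ to the exact sequence $G\ola{\phi}G\ola{\pi}\coker\phi\arr0$ yields the exact sequence $0\arr\widehat{\coker\phi}\ola{\hat\pi}\hat G\ola{\hat\phi}\hat G$ --- exact because $\T$ is divisible, hence injective in the category of abelian groups, so $\Hom(-,\T)$ is exact --- which identifies $\widehat{\coker\phi}$ with $\Ker\hat\phi$ via $\hat\pi$. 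This last description makes the isomorphism manifestly canonical, i.e. natural in the pair $(G,\phi)$.

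The only genuine subtlety worth flagging is the interplay with topology: one must check that every set-theoretic character occurring is automatically continuous (true here since $G$ and $\coker\phi$ are discrete) and that the identification respects the topologies on the duals, so that ``canonically isomorphic'' can be read either as an isomorphism of discrete abelian groups or of topological groups. For a possibly non-discrete $G$ one would instead invoke exactness of Pontryagin duality on the category of locally compact abelian groups, but in the countable discrete setting used throughout the paper the elementary argument above suffices, and everything else is a routine diagram chase.
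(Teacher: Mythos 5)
Your proof is essentially the same as the paper's: both identify $\Ker\hat\phi$ with the characters of $G$ that vanish on $\Im\phi$ and then factor these through the quotient to obtain the bijection $\chi\leftrightarrow\overline\chi$ with $\widehat{\coker\phi}$. The extra remarks about topology (automatic continuity because $G$ and $\coker\phi$ are discrete) and the functorial recast via exactness of $\Hom(-,\T)$ applied to $G\ola{\phi}G\arr\coker\phi\arr 0$ are correct and make the canonicity more transparent, but they do not change the underlying argument.
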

 
\begin{proof}

We construct the isomorphism explicitly.
Let $\chi$ be in the dual of $\coker(\phi:G\to G)$.
In that case $\chi$ is a homomorphism
  $$  \chi : G / \im(\phi) \longrightarrow U(1).$$
There is therefore an induced map
 $$  \overline{\chi} : G \longrightarrow U(1) $$
which is trivial on $\im(\phi)$.
This means that $\overline{\chi}\circ\phi$ is trivial, or
in other words $\hat{\phi}(\overline{\chi})$ is the identity element of $\hat{G}$.
We therefore have $\overline{\chi}\in\ker(\hat{\phi})$.
 
If on the other hand we begin with $\overline{\chi}\in\ker(\hat{\phi})$,
then it follows that $\chi$ is trivial on $\im\phi$, and so
$\overline{\chi}$ induces a homomorphism
 $$ \chi : G / \im(\phi) \longrightarrow U(1)$$
and $\chi$ is then in the dual of $\coker\phi$.
The correspondence $\chi\leftrightarrow\overline{\chi}$ is
clearly a bijection.
 
\end{proof}

 The following results are also needed
 
\begin{lem}\cite{Mi}\label{miles1}
 Let $L \subset N$ be $R$-modules and $g\in R$.\\
 Then
 \\
(1) $$ \Bigl |\frac{N}{gN}\Bigr | = \Bigl |\frac{N/L}{g(N/L)}  \Bigr |\Bigl |\frac{L}{L \cap gN} \Bigr |$$\\
 (2) If $N/L$ is finite and the map $x\to gx$ is a monomorphism of $N$ then
 $$\Bigl |\frac{N}{gN}\Bigr |=\Bigl |\frac{L}{gL}\Bigr |.$$
 \end{lem}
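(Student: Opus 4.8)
The plan is to derive both statements from one natural short exact sequence of quotient modules, and then to use the extra hypotheses in part~(2) to promote an index identity for that sequence into the asserted equality.

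For part~(1), I would first note that multiplication by $g$ makes the inclusion $L\hookrightarrow N$ into a map compatible with the multiplication-by-$g$ maps, and that the image of $gN$ under $N\to N/L$ is precisely $g(N/L)$. Hence there is a canonical surjection $N/gN\to N/(L+gN)=(N/L)/g(N/L)$ whose kernel is $(L+gN)/gN$, which the second isomorphism theorem identifies with $L/(L\cap gN)$. This produces a short exact sequence
$$
0\longrightarrow L/(L\cap gN)\longrightarrow N/gN\longrightarrow (N/L)\big/g(N/L)\longrightarrow 0,
$$
and reading off cardinalities yields~(1). No finiteness or injectivity hypothesis is needed here, provided $|\cdot|$ is read in cardinal arithmetic; the case of interest is when all three groups are finite.

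For part~(2), since $gL\subseteq L\cap gN\subseteq L$, the same kind of argument gives $|L/gL|=|L/(L\cap gN)|\cdot|(L\cap gN)/gL|$, so by~(1) it suffices to prove $|(L\cap gN)/gL|=|(N/L)/g(N/L)|$. This is where the hypotheses enter. Put $N'=\{n\in N:gn\in L\}$, so that $L\subseteq N'$. Injectivity of multiplication by $g$ on $N$ makes $g$ an isomorphism $N'\stackrel{\sim}{\longrightarrow} L\cap gN$ carrying $L$ onto $gL$ (here one checks $g(N')=L\cap gN$ using that $g$ is injective on $N$), hence inducing an isomorphism $N'/L\cong (L\cap gN)/gL$; and by construction $N'/L=\Ker\!\big(g\colon N/L\to N/L\big)$. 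Finally, since $N/L$ is finite, the endomorphism ``multiplication by $g$'' of $N/L$ has $|\Ker|=|\Coker|$, that is $|N'/L|=|(N/L)/g(N/L)|$. Combining this with~(1) and with $|L/gL|=|L/(L\cap gN)|\cdot|(L\cap gN)/gL|$ gives $|N/gN|=|L/gL|$.

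The isomorphism-theorem manipulations are routine diagram chasing; I expect the only place needing real care to be part~(2), namely keeping straight which quotients are identified via the inclusion $L\subseteq N$ and which via the map $g$, and verifying $g(N')=L\cap gN$ — the step where injectivity of $g$ on $N$ is used essentially. That bookkeeping, together with the elementary fact that any endomorphism of a finite abelian group has kernel and cokernel of the same cardinality, is the heart of the matter.
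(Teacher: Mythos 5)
Your proof is correct. The paper itself does not reprove this lemma — it cites it directly from Miles' paper \cite{Mi} — so there is no in-text argument to compare against; but your argument is a clean and complete proof of both parts.

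A few confirmations of the delicate points. In part (1), the surjection $N/gN \to (N/L)/g(N/L)$ has kernel $(L+gN)/gN$, and the second isomorphism theorem identifies this with $L/(L\cap gN)$, giving the short exact sequence you write. In part (2), the tower $gL\subseteq L\cap gN\subseteq L$ (the first inclusion because $L$ is a submodule) gives $|L/gL| = |L/(L\cap gN)|\cdot|(L\cap gN)/gL|$. Your set $N'=\{n\in N: gn\in L\}$ indeed has $g(N')=L\cap gN$ (the inclusion $\supseteq$ is exactly where one unwinds $x\in L\cap gN$ as $x=gn$ with $gn\in L$), and injectivity of $g$ on $N$ makes $g:N'\to L\cap gN$ an isomorphism carrying $L$ to $gL$; hence $N'/L\cong (L\cap gN)/gL$, while $N'/L$ is visibly $\Ker(g:N/L\to N/L)$. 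Since $N/L$ is finite, kernel and cokernel of an endomorphism have the same cardinality, so the two outer factors in the two displayed products match and $|N/gN|=|L/gL|$ follows, even in cardinal arithmetic, because the matched factor $|(L\cap gN)/gL|=|(N/L)/g(N/L)|$ is finite. No gaps.
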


Suppose that $G$ as an $R$- module  satisfies the following conditions:

(1) the set of associated primes $Ass(G)$ is finite and consists entirely of non-zero
principal ideals of $R$,

(2) the map $g \rightarrow (t^j -1)g$  is a monomorphism of $G$ for all $j\in \mathbb{N}$\\
(equivalently, $t^j - 1 \notin \mathfrak{p}$ for all $ \mathfrak{p}\in Ass(G)$ and all $j\in
\mathbb{N}$),

(3) for each $\mathfrak{p}\in Ass(G)$, $ m(\mathfrak{p})= \dim_{\mathbb{K}(\mathfrak{p})}G_{\mathfrak{p}} < \infty$,
where $\mathbb{K}(\mathfrak{p})$ denotes the field of fractions of $R/\mathfrak{p}$ and $G_{\mathfrak{p}} = G\otimes_R\mathbb{K}(\mathfrak{p})$ is the localization of the module $G$ at $\mathfrak{p}$.

\begin{lem}\cite{Mi}\label{miles2}
 Let N be an $R$-module for which $Ass(N)$ consists of finitely many non-trivial
 principal ideals and suppose $ m(\mathfrak{p})= \dim_{\mathbb{K}(\mathfrak{p})}N_{\mathfrak{p}} < \infty$. If $g\in R$ is such that the map $x \to gx$ is a monomorphism of $N$, then
 $N/gN$ is finite.
 
  \end{lem}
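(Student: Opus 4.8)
The plan is to reduce the statement — using that $R=\mathbb Z[t]$ is a two‑dimensional regular UFD, together with Lemma~\ref{miles1} — to a Krull‑dimension count for a finitely generated module. First I would make the elementary reductions. We may assume $N\ne 0$ and that $g$ is not a unit, since otherwise $N/gN=0$. The set of zero‑divisors on $N$ is $\bigcup_{\mathfrak p\in Ass(N)}\mathfrak p$, so the hypothesis that $x\mapsto gx$ is injective means $g\notin\mathfrak p$ for every $\mathfrak p\in Ass(N)$; writing $Ass(N)=\{(p_1),\dots,(p_k)\}$ with each $p_i$ a prime element of $R$, this says $p_i\nmid g$. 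Recall that a nonzero principal prime has height $1$, so $R_{(p_i)}$ is a DVR and $R/(p_i)$ a one‑dimensional Noetherian domain; recall also that every maximal ideal of $\mathbb Z[t]$ has finite residue field, that $N$ is a torsion module since $(0)\notin Ass(N)$, and that every prime of $\operatorname{Supp}(N)$ contains an associated prime, whence $\operatorname{Supp}(N)\ss\bigcup_i V\bigl((p_i)\bigr)$.

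Next I would localise the target. One has $\operatorname{Supp}(N/gN)\ss\operatorname{Supp}(N)\cap V(g)\ss\bigcup_i V\bigl((p_i)+(g)\bigr)$, and since $\bar g\ne 0$ in the one‑dimensional domain $R/(p_i)$ the ring $R/\bigl((p_i)+(g)\bigr)$ is Artinian; as a height‑one prime of $\operatorname{Supp}(N/gN)$ would have to be one of the $(p_i)$ and those do not contain $g$, it follows that $\operatorname{Supp}(N/gN)$ is a finite set of maximal ideals with finite residue fields. Thus $N/gN$ is the direct sum of its localisations there, and it is enough to prove $N/gN$ has finite length. Here I would invoke $m\bigl((p_i)\bigr)<\infty$: choose $x_{i,1},\dots,x_{i,m_i}\in N$ whose images span the $\mathbb K\bigl((p_i)\bigr)$‑vector space $N_{(p_i)}/p_iN_{(p_i)}$ and put $L=\sum_{i,j}Rx_{i,j}$, a finitely generated submodule satisfying $N_{(p_i)}=L_{(p_i)}+p_iN_{(p_i)}$, so that $(N/L)_{(p_i)}$ is divisible over the DVR $R_{(p_i)}$. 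Then Lemma~\ref{miles1}(1) gives
$$|N/gN|=\bigl|(N/L)/g(N/L)\bigr|\cdot\bigl|L/(L\cap gN)\bigr|,$$
and it suffices to bound the two factors.

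The second factor is a quotient of $L/gL$, because $gL\ss L\cap gN$. Since $L$ is finitely generated with $Ass(L)\ss Ass(N)$ consisting of height‑one primes of the two‑dimensional ring $R$, we have $\dim L\le 1$; and because $g$ lies in no minimal prime of $\operatorname{Supp}(L)$, $\dim(L/gL)\le 0$, so $L/gL$ is a finitely generated $R$‑module of dimension $0$, hence of finite length, hence finite (finite residue fields). For the first factor, write $\bar N=N/L$. As before $\operatorname{Supp}(\bar N/g\bar N)$ is a finite set of maximal ideals, so $\bar N/g\bar N=\bigoplus_{\mathfrak m}(\bar N/g\bar N)_{\mathfrak m}$; fix such an $\mathfrak m$ and set $M=\bar N_{\mathfrak m}$, a module over the two‑dimensional regular local ring $R_{\mathfrak m}$ whose support has its minimal primes among the $(p_i)R_{\mathfrak m}$, at each of which $M$ is divisible (by the choice of $L$) and $g$ is a unit. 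It then remains to show $M/gM$ has finite length, which I would deduce by analysing $M$ through the DVRs $R_{(p_i)}$, the hypothesis that $Ass(N)$ consists of nonzero ideals being what rules out the obstructing divisible‑torsion phenomena. Combining the two bounds yields $N/gN$ finite.

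I expect this last step to be the main obstacle. The subtlety is that $N$, and hence $\bar N=N/L$, need not be finitely generated, and the hypothesis $m(\mathfrak p)<\infty$ is exactly what ensures that, once the finitely generated submodule $L$ is removed, what is left becomes divisible at each associated prime — and therefore is annihilated on passing to $N/gN$, where $g$ is locally invertible at those primes. Turning this into a clean argument (and handling any maximal associated primes that $\bar N$ may acquire) is where the structure theory of modules over the DVRs $R_{\mathfrak p}$ and Lemma~\ref{miles1} carry the weight, whereas the finitely generated case reduces to the soft dimension count above.
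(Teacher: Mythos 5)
The paper does not prove this lemma: it is cited from Miles~\cite{Mi}, and the argument that would prove it is only visible implicitly inside the proof of Proposition~\ref{main_formula1}, where $N$ is embedded in $U^{-1}N$ for $U=\bigcap_{\mathfrak p\in Ass(N)}(R\setminus\mathfrak p)$, the localisation $U^{-1}N$ over the semi-local PID $\mathfrak R=U^{-1}R$ is shown to be Noetherian, and a prime filtration intersected back with $N$ reduces everything (via Lemma~\ref{miles1}) to fractional ideals of $R/\mathfrak p_i$. So there is no in-paper proof to match your attempt against; what follows assesses the attempt on its own and against that inferable route.

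Your preliminary reductions are sound: $g\notin\mathfrak p$ for every $\mathfrak p\in Ass(N)$, $\operatorname{Supp}(N/gN)$ is a finite set of maximal ideals of $\mathbb Z[t]$ with finite residue fields, and Lemma~\ref{miles1}(1) applied to a finitely generated $L\subset N$ with $L_{(p_i)}+p_iN_{(p_i)}=N_{(p_i)}$ factors $|N/gN|$ into $|L/(L\cap gN)|$ (finite, by your dimension count) and $|(N/L)/g(N/L)|$. The gap, which you yourself flag, is that the second factor is never shown to be finite, and this is not a small loose end but the entire content of the lemma. Indeed the reduction is circular: the snake lemma for $0\to L\to N\to N/L\to 0$ gives an exact sequence $(N/L)[g]\to L/gL\to N/gN\to (N/L)/g(N/L)\to 0$ with $L/gL$ finite, so $N/gN$ is finite \emph{iff} $(N/L)/g(N/L)$ is, and you have merely replaced $N$ by a module on which you now have one extra piece of structure (divisibility of $(N/L)_{(p_i)}$) but have \emph{lost} two hypotheses you used before: $N/L$ can acquire new associated primes that are maximal ideals, and $g$ need not act injectively on $N/L$. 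Your closing claim — that $M=(N/L)_{\mathfrak m}$, possibly infinitely generated over the two-dimensional regular local ring $R_{\mathfrak m}$, with only ``divisibility at its minimal primes'' as structural control, must have $M/gM$ of finite length — is exactly the assertion that needs a proof, and it is not obvious (a priori nothing prevents $M$ from being supported entirely at $\mathfrak m$, with large $\mathfrak m$-torsion that survives modulo $g$). So the attempt stalls precisely where the hypothesis $m(\mathfrak p)<\infty$ has to be converted into a finiteness statement downstairs.

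By contrast, the route used in~\cite{Mi} (and reproduced in the proof of Proposition~\ref{main_formula1}) avoids this local analysis: it uses $m(\mathfrak p)<\infty$ to make $U^{-1}N$ a Noetherian $\mathfrak R$-module, then takes a prime filtration of $U^{-1}N$ and intersects with $N$; each graded piece $L_i/L_{i-1}$ embeds into $\mathbb K(\mathfrak p_i)$ and is a fractional ideal of $R/\mathfrak p_i$, for which finiteness of $(L_i/L_{i-1})/g(L_i/L_{i-1})$ is handled concretely via the Dedekind domain $D_i$ and Lemmas~\ref{miles1}(2) and~\ref{miles2} (the latter applied in the one-dimensional setting where it is elementary). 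Your submodule $L$ plays a role analogous to one rung of that filtration, but a single step does not suffice; the filtration is what lets one peel off all of $N$ finitely many cyclic layers at a time. To repair your argument you would need either to establish Noetherianity of $U^{-1}N$ and switch to the filtration, or to supply a genuine proof of the finite-length claim for $M/gM$ that controls the $\mathfrak m$-torsion of $N/L$; as written, neither is done.
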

 
If the Pontryagin dual endomorphism $\widehat{\phi}$ is an ergodic finite entropy
epimorphism of the  compact connected Abelian group $\wh G$ 
of  finite  dimension $d\geq 1$ then the endomorphism $\phi: G \rightarrow G$ satisfies the conditions (1) - (3) above. Such  dual groups $\wh G$  are called  solenoids(see \cite{Mi, Sch}).

For the dual endomorphism~$\widehat{\phi}:\wh G\rightarrow \wh G$,
we use the following closed periodic point counting formula  taken
from~\cite[Th.~1.1]{Mi} and \cite[Pr.~14]{BMW}. Let ~$F_{\widehat{\phi}}(j)=|Fix(\widehat\phi^j)|$
denotes the number of points fixed by the
endomorphism~$\widehat{\phi}^j$. Some obvious conditions such as ergodicity and finite entropy are 
necessary to ensure that $F_{\widehat{\phi}}(j)$ is finite for all $j\in \mathbb{N}.$  Let $\mathcal{P}(\mathbb{K})$ denote the
set of finite places of the algebraic number field~$\mathbb{K}$. The places
 of the field $\mathbb{K}$ are the equivalence classes of absolute
values on $\mathbb{K}$. When $char(\mathbb{K})=0$, the infinite places are the archimedean ones.
All other places are said to be finite. Given a finite place of $\mathbb{K}$, there
corresponds a unique discrete valuation $v$ whose precise value group is $\mathbb{Z}$.
The corresponding normalised absolute value $|\cdot|_v= |\mathcal{R}_v|^{-v(\cdot)}$,
where $\mathcal{R}_v$ is the residue class field of $v$.
For any set
of places~$S$, we
write~$|x|_S=\prod_{v\in S}|x|_v$.

\begin{prop}\cite[Th.~1.1]{Mi}, \cite[Pr.~14]{BMW}\label{main_formula1}
If~$\widehat{\phi}:\wh G\rightarrow \wh G$ is an ergodic finite entropy automorphism of a finite
dimensional compact connected Abelian group $\wh G$ , then there exist
algebraic number fields~$\mathbb{K}_1,\dots,\mathbb{K}_n$, sets
of finite places~$P_i\subset \mathcal{P}(\mathbb{K}_i)$ and
elements~$\xi_i\in \mathbb{K}_i$, no one of which is a root of
unity for~$i=1,\dots,n$, such that for any $j\in \mathbb{N}$.
 
\begin{equation}\label{periodic}
F_{\widehat{\phi}}(j)=\prod_{i=1}^{n}\prod_{v\in P_i}
|\xi_i^j-1|_{v}^{-1} = \prod_{i=1}^{n}|\xi_i^j-1|_{P_i}^{-1}.
\end{equation}
\end{prop}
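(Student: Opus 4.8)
The statement is a known periodic-point counting formula from the literature (attributed to Miles~\cite{Mi} and to~\cite{BMW}), so the proof will be an adaptation/exposition of those arguments rather than a new computation. The strategy is to pass from the dynamics of $\widehat\phi$ on $\wh G$ to the algebra of $G$ as an $R = \mathbb{Z}[t]$-module, using Pontryagin duality as the dictionary. First I would use Lemma~\ref{dual} to identify $\ker[\widehat\phi^j : \wh G \to \wh G]$ with the Pontryagin dual of $\coker(\phi^j : G \to G)$, and more generally, applying this to $\phi^j$ in place of $\phi$, identify $\mathrm{Fix}(\widehat\phi^j) = \ker[\widehat\phi^j - \mathrm{id}]$ with the dual of $G/(t^j - 1)G$. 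Hence $F_{\widehat\phi}(j) = |G/(t^j-1)G|$, and condition~(2) on $G$ together with Lemma~\ref{miles2} guarantees this is finite (here one uses that ergodicity forces no eigenvalue, i.e.\ no prime in $\mathrm{Ass}(G)$, to make $t^j-1$ a zero-divisor).

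**The algebraic core.**
The next step is to reduce the computation of $|G/(t^j-1)G|$ to a sum over the associated primes $\mathfrak{p} \in \mathrm{Ass}(G)$. Using Lemma~\ref{miles1}(1) repeatedly along a filtration of $G$ whose successive quotients are (up to finite index) of the form $R/\mathfrak{p}$ with $\mathfrak{p}$ ranging over $\mathrm{Ass}(G)$ with multiplicities $m(\mathfrak{p})$, and Lemma~\ref{miles1}(2) to discard the bounded-index discrepancies (these contribute a multiplicative constant independent of $j$, which must then be shown to be $1$, e.g.\ by an entropy or degree normalization argument, or absorbed into the $\xi_i$), one gets
\begin{equation*}
F_{\widehat\phi}(j) = \prod_{\mathfrak{p} \in \mathrm{Ass}(G)} \left| \frac{R/\mathfrak{p}}{(t^j-1)(R/\mathfrak{p})} \right|^{m(\mathfrak{p})}.
\end{equation*}
Now each $\mathfrak{p} = (h_\mathfrak{p})$ is a nonzero principal prime, so $R/\mathfrak{p}$ is an order in the number field $\mathbb{K}_\mathfrak{p} := \mathbb{K}(\mathfrak{p})$ generated by a root $\xi_\mathfrak{p}$ of $h_\mathfrak{p}$. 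The final step is the standard adelic/product-formula computation: for an algebraic number $\xi$ that is not a root of unity, $|R/\mathfrak{p} \,/\, (t^j-1)(R/\mathfrak{p})| = \prod_{v \in P_\mathfrak{p}} |\xi_\mathfrak{p}^j - 1|_v^{-1}$ for an appropriate finite set $P_\mathfrak{p}$ of finite places of $\mathbb{K}_\mathfrak{p}$ (those dividing the different/conductor of the order together with those where $\xi_\mathfrak{p}^j - 1$ fails to be a unit); the product formula $\prod_{\text{all } v} |\xi_\mathfrak{p}^j-1|_v = 1$ is what converts a quotient cardinality into a product over finite places, and the non-root-of-unity hypothesis ensures $\xi_\mathfrak{p}^j \neq 1$ so the product is nonempty and well-defined. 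Relabeling $(\mathfrak{p}, \text{copies})$ as $i = 1,\dots,n$ yields the stated formula.

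**Main obstacle.**
The genuinely delicate point is the bookkeeping of the "finite index" errors in the filtration argument: each application of Lemma~\ref{miles1} replaces $G$ by a submodule or quotient at the cost of a factor $|L/(L \cap (t^j-1)N)|$ versus $|N/(t^j-1)N|$, and one must check that the accumulated discrepancy is either exactly $1$ or can be folded into the choice of the finite place sets $P_i$ and the elements $\xi_i$ without destroying the property that no $\xi_i$ is a root of unity. Equivalently, one must verify that the localization $G_\mathfrak{p}$ determines the growth of $|G/(t^j-1)G|$ up to the precise adelic product and not merely up to a bounded multiplicative constant — this is exactly the content of~\cite[Th.~1.1]{Mi}, so I would cite that theorem for this step rather than reprove it, and restrict the exposition here to the reduction (via Lemmas~\ref{dual}, \ref{miles1}, \ref{miles2}) showing that the hypotheses of Miles's theorem are met, namely that $\widehat\phi$ ergodic of finite entropy on a $d$-dimensional solenoid forces conditions~(1)--(3) on the $R$-module $G$.
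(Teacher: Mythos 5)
Your high-level plan matches the paper's: duality ($F_{\widehat\phi}(j)=|G/(t^j-1)G|$ via Lemma~\ref{dual}), then a filtration argument to factor the count over $\Ass(G)$, then an adelic expression of each factor. But two concrete points in the middle go wrong, and they are not cosmetic.

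First, your claimed intermediate identity
$F_{\widehat\phi}(j)=\prod_{\mathfrak{p}\in \Ass(G)}\bigl|(R/\mathfrak{p})/(t^j-1)(R/\mathfrak{p})\bigr|^{m(\mathfrak{p})}$
is false. The paper's filtration $\{0\}=L_0\subset\cdots\subset L_n=G$ produces subquotients $N_i=L_i/L_{i-1}$ that sit inside $\mathbb{K}(\mathfrak{p}_i)$ as fractional-ideal--type modules over $E_i=R/\mathfrak{p}_i$; they are in general \emph{not} copies of $R/\mathfrak{p}_i$. The place set $P_i$ in the final formula is $P_i=\{\mathfrak{m}\in\mathrm{Spec}(D_i):(I_i)_{\mathfrak{m}}\neq \mathbb{K}_i\}$, where $I_i=D_i\otimes_{E_i}N_i$, and this set genuinely depends on $N_i$, not just on $\mathfrak{p}_i$. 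The paper's own example $G=\Z[\tfrac13]$, $\phi=\times 2$, shows the difference: here $R/\mathfrak{p}\cong\Z$, which would give $P=\mathcal{P}(\Q)$ and, by the product formula, $F_{\widehat\phi}(j)=|2^j-1|$; but the actual module $N=\Z[\tfrac13]$ excludes the place $3$, giving the correct $F_{\widehat\phi}(j)=|2^j-1|\cdot|2^j-1|_3$. Relatedly, your parenthetical description of $P_{\mathfrak{p}}$ (``different/conductor\ldots together with those where $\xi^j-1$ fails to be a unit'') cannot be right: $P_i$ must be independent of $j$, since the same set appears for all $j$ in \eqref{periodic}, and it can be an infinite set.

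Second, the ``main obstacle'' you flag — accumulated finite-index multiplicative errors to be normalized away — is not where the difficulty lies. The paper's use of Lemma~\ref{miles1}(1) together with ergodicity ($t^j-1\notin\mathfrak{p}_i$, so $(\xi_i^j-1)\neq 0$) forces $L_{i-1}\cap(t^j-1)L_i=(t^j-1)L_{i-1}$ exactly, so the chain factors with no error at all: $|G/(t^j-1)G|=\prod_i|N_i/(t^j-1)N_i|$. Likewise, Lemma~\ref{miles2} and Lemma~\ref{miles1}(2) give $|N_i/(\xi_i^j-1)N_i|=|I_i/(\xi_i^j-1)I_i|$ with equality, not up to a constant. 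There is nothing to absorb. The real content (and the part you would have to import from \cite[Th.~1.1]{Mi} if you do not reprove it) is the passage from the $R$-module $G$ to the correct family $(\mathbb{K}_i,\xi_i,P_i)$ via the prime filtration of the localization $U^{-1}G$ and the identification of $P_i$ from the fractional ideal $I_i$. Citing Miles is legitimate here — the paper itself does — but your sketch of what Miles proves mischaracterizes both the shape of the intermediate formula and where the work is.
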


\begin{proof}

 We outline  the major steps in the proof.
 
 Under assumptions of the proposition the number of the periodic points $F_{\widehat{\phi}}(j)$ is finite for all $j\in \mathbb{N}$. Considering abelian group $G$ as $Z[t]$-module and using a straightforward duality argument in  Lemma \ref{dual}( or
in  \cite[Lemma 7.2]{LSW}) we have
\begin{eqnarray*}
 F_{\widehat{\phi}}(j)=|Fix(\widehat\phi^j) |= | \Ker(\widehat\phi^j-\Id_{\widehat G}) | = 
|\widehat{\coker(\phi-\Id_G)}| =\\
= |\coker(\phi^j - \Id_G) | = |G/(\phi^j-1)G| = |G/(t^j-1)G|. 
\end{eqnarray*}
 
The multiplicative set $ U = \bigcap_{\mathfrak{p}\in Ass(G)} R-{\mathfrak{p}}$
has $U\cap ann(a) = \emptyset$ for all non-zero $a\in G$, so the natural map $G\to U^{-1}G$
is a monomorphism. Identifying localizations of $R$ with subrings of $\mathbb{Q}(t)$, the
domain $ \mathfrak{R}=U^{-1}R = \bigcap_{\mathfrak{p}\in Ass(G)} R_\mathfrak{p}$ is a finite intersection
of discrete valuation rings and is therefore a principal ideal domain \cite{Matsumura}.
The assumptions of finite entropy and finite topological dimension force $U^{-1}G $ to be a Noeterian
$ \mathfrak{R}$ - module.Hence, there is a prime filtration
$$\{0\} = G_0 \subset G_1\subset \cdot \cdot\cdot \subset G_n = U^{-1} G $$
in which $ G_i/G_{i-1} \cong \mathfrak{R}/\mathfrak{q_i}$
for non-trivial primes $\mathfrak{q_i} \subset \mathfrak{R}, 1\leq i\leq n$.
Moreover, $\mathfrak{p_i}=\mathfrak{q_i}\cap R\in Ass(G)$ for all $1\leq i\leq n$.
Identifying $G$ with its image in $U^{-1}G$ and intersecting the chain above with $G$
gives a chain 

$$\{0\} = L_0 \subset L_1\subset \cdot \cdot \cdot\subset L_n = G .$$
Considering this chain of $R$-modules, for each $1\leq i\leq n$ there is an induced inclusion
$$
 \frac{L_i}{L_{i-1}} \hookrightarrow    \frac{G_i}{G_{i-1}} \cong \frac{\mathfrak{R}}{\mathfrak{q_i}}\cong
 \mathbb{K(\mathfrak{p_i})}= K_i
$$
and $N_i = L_i/L_{i-1}$ may be considered as a fractional ideal of $E_i = R/\mathfrak{p_i}$.
Using Lemma \ref{miles1}(1),
$$ \Bigl|\frac{L_i}{(t^j - 1)L_i}\Bigr | = \Bigl|\frac{N_i}{(t^j - 1)N_i} \Bigr |\Bigl|\frac{L_{i-1}}{L_{i-1} \cap (t^j-1)L_i} \Bigr |,$$
where $1\leq i\leq n$. Let $y\in L_i$, let $\eta$ denote the image of $y$ in $N_i$ and let $\xi_i$
denote the image of $t$ in $E_i$. If $(t^j-1)y\in L_{i-1}$ then $(\xi_i^j-1)\eta = 0$.
The ergodicity assumption implies $ t^j-1\notin \mathfrak{p_i}$ so $(\xi_i^j-1)\neq 0$. 
Therefore, $\eta=0$ and $y\in L_{i-1}$. It follows that $L_{i-1} \cap (t^j-1)L_i = (t^j-1)L_{i-1}$
and hence,

$$ \Bigl| \frac{L_i}{(t^j-1)L_i}\Bigr | = \Bigl|\frac{N_i}{(t^j-1)N_i}\Bigr |\Bigl|\frac{L_{i-1}}{ (t^j-1)L_{i-1}} \Bigr |,$$

Successively applying this formula to each of the modules $L_i,1\leq i\leq n$, gives,
$$ | G/(t^j-1)G| = \prod_{i=1}^{n}|N_i/(t^j-1)N_i|$$

Consider now an individual  term $|N_i/(t^j-1)N_i|$. Since $E_i$ is a finitely generated domain,
\cite[Th.~1.1]{Mi}\cite[Th. 4.14]{Eisenbud} shows that the integral closure $D_i$
of $E_i$ in $K_i$ is a finitely generated Dedekind domain. Therefore, $D_i$ is finitely generated as an $E_i$-module. We may consider $I_i = D_i\otimes_{E_i}N_i$ as a
fractional ideal of $D_i$. Lemma \ref{miles2} and  Lemma \ref{miles1}(2) imply that
$|N_i/(\xi_i^j-1)N_i|= |I_i/(\xi_i^j-1)I_i|$ (see \cite{Mi}).
By considering $I_i/(\xi_i^j-1)I_i$ as a $D_i$-module, finding a composition series for this module and    successively localizing at each of its associated primes to obtain multiplicities, it follows that
$$|I_i/(\xi_i^j-1)I_i| = \prod_{\mathfrak{m}\in Ass(I_i/(\xi_i^j-1)I_i)}q_{\mathfrak{m}}^{\delta_\mathfrak{m}(\xi_i,I_i)},$$ where $q_{\mathfrak{m}}= |D_i/\mathfrak{m}|$ and $\delta_\mathfrak{m}(\xi_i,I_i) = \dim_{D_i/\mathfrak{m}}(I_i/(\xi_i^j-1)I_i)_\mathfrak{m}$. Let 
$$P_i=\{\mathfrak{m}\in Spec(D_i) : I_{\mathfrak{m}} \neq K_i\}.$$
 It follows that the product above may be taken over all $\mathfrak{m}\in P_i$ to yield the same result. Each localization $(D_i)_{\mathfrak{m}}$ is a distinct valuation ring of $K_i$ and $P_i$ may be identified with a set of finite places of the global field $K_i$.
Hence, since $\delta_\mathfrak{m}(\xi_i,D_i) = v_\mathfrak{m}(\xi_i^j-1)$, finally we have  
$$ |I_i/(\xi_i^j-1)I_i| = \prod_{\mathfrak{m}\in P_i}q_{\mathfrak{m}}^{\delta_\mathfrak{m}(\xi_i,D_i)}= \prod_{\mathfrak{m}\in P_i}q_{\mathfrak{m}}^{v_\mathfrak{m}(\xi_i^j-1)} = \prod_{\mathfrak{m}\in P_i}|\xi_i^j-1 |_{\mathfrak{m}}^{-1},$$
where $|\cdot|_{\mathfrak{m}}$ is the normalised absolute value arising from $D_{\mathfrak{m}}$. This concludes the proof. 
 \end{proof}

\begin{rk}
It is useful to note 
that~$\mathbb{K}_i=\mathbb{Q}(\xi_i)$,~$i=1,\dots,n$. Applying the Artin product formula \cite{Weil}
to~(\ref{periodic}) gives
\begin{equation}\label{main_formula2}
F_{\widehat{\phi}}(j)=
\prod_{i=1}^{n}
|\xi_i^j-1|_{P_i^\infty\cup S_i},
\end{equation}
where~$P_i^\infty$ denotes the set of infinite places
of~$\mathbb{K}_i$ and~$S_i=\mathcal{P}(\mathbb{K}_i)\setminus
P_i$. It is also worth noting that~\cite[Rmk.~1]{Mi}
implies that~$|\xi_i|_v=1$ for all~$v\in P_i$,~$i=1,\dots,n$,
as~$\phi$ is an automorphism. 
\end{rk}

The following results are  needed to have
more ready access to the theory of linear recurrence sequences.
Relevant background on the connection between linear recurrence sequences
and the rationality may be found in the monograph of Everest, van der Poorten, Shparlinski and Ward \cite{EPSW}. 

\begin{lem}(cf. \cite{BMW})\label{generating}
Let~$R(z)=\sum_{n= 1}^{\infty}R(\phi^n)z^n$.
If~$R_\phi(z)$ is rational then~$R(z)$ is rational.
If~$R_\phi(z)$ has analytic continuation beyond
its circle of convergence, then so too does~$R(z)$.
In particular, the existence of a natural boundary
at the circle of convergence for~$R(z)$ implies the
existence of a natural boundary for~$R_\phi(z)$.
\end{lem}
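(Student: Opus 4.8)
The plan is to exploit the standard exp–log relationship between a sequence and its zeta function, keeping careful track of where convergence and analyticity are preserved. Recall that
$$
R_\phi(z)=\exp\left(\sum_{n=1}^\infty \frac{R(\phi^n)}{n}z^n\right),
$$
so that if we write $\ell(z)=\log R_\phi(z)=\sum_{n\ge 1}\frac{R(\phi^n)}{n}z^n$, then $z\,\ell'(z)=\sum_{n\ge 1}R(\phi^n)z^n=R(z)$. Thus $R(z)=z\,\dfrac{R_\phi'(z)}{R_\phi(z)}$ is nothing but the logarithmic derivative of $R_\phi(z)$, multiplied by $z$.

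First I would prove the rationality statement. If $R_\phi(z)$ is rational, then so is its derivative $R_\phi'(z)$, hence so is the quotient $R_\phi'(z)/R_\phi(z)$, and multiplying by $z$ keeps it rational; therefore $R(z)=z R_\phi'(z)/R_\phi(z)$ is rational. (One should note here that $R_\phi(z)$ has a nonzero constant term equal to $1$, so the logarithmic derivative is genuinely a well-defined rational function and the power series $R(z)$ recovers it near the origin.)

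Next I would treat the analytic-continuation statement. Suppose $R_\phi(z)$ extends analytically to a region $\Omega$ strictly larger than its disc of convergence. Since $R_\phi(z)$ is given near $0$ by $\exp(\ell(z))$ with $R_\phi(0)=1$, it is zero-free in a neighborhood of $0$; on the connected component $\Omega_0$ of $\Omega$ containing that neighborhood, $R_\phi$ is meromorphic (in fact holomorphic, being an analytic continuation of a holomorphic function), and its logarithmic derivative $R_\phi'(z)/R_\phi(z)$ is meromorphic on $\Omega_0$, with poles only at the zeros of $R_\phi$. Hence $R(z)=zR_\phi'(z)/R_\phi(z)$ continues meromorphically to $\Omega_0$, which extends beyond the circle of convergence of $R(z)$ (the two series have the same radius of convergence, since $R(\phi^n)$ and $R(\phi^n)/n$ have the same exponential growth rate). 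Taking contrapositives: if $R(z)$ has a natural boundary on its circle of convergence — i.e. admits no analytic (or even meromorphic) continuation past any point of that circle — then the same must hold for $R_\phi(z)$, for otherwise the argument just given would continue $R(z)$ as well.

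The main obstacle is the bookkeeping around zeros and poles: passing from $R_\phi$ to its logarithmic derivative can introduce poles at zeros of $R_\phi$, so strictly speaking one continues $R(z)$ as a \emph{meromorphic} rather than holomorphic function. This is harmless for the natural-boundary conclusion, since a natural boundary forbids meromorphic continuation as well, but it must be stated carefully. One should also confirm that $R_\phi(z)$ itself continues — not just $1/R_\phi(z)$ or $\log R_\phi(z)$ — which is immediate from the hypothesis but worth making explicit, and that the branch of the logarithm used to define $\ell(z)$ near $0$ is the one with $\ell(0)=0$, so that no monodromy issues arise in the disc of convergence.
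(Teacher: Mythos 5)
Your proposal is correct and takes exactly the paper's approach: the paper's entire proof is the one-line observation that $R(z)=z\,R_\phi'(z)/R_\phi(z)$, and you have simply spelled out the consequences (rationality of a logarithmic derivative of a rational function; meromorphic continuation of the logarithmic derivative wherever $R_\phi$ continues) that the paper leaves implicit. Your caveat about meromorphic versus holomorphic continuation is a fair point that the paper glosses over, but as you note it does not affect the natural-boundary conclusion, which is what the lemma is actually used for.
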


\begin{proof}
This follows from the fact that~$R(z)= z\cdot R'_\phi(z)/R_\phi(z)$.
\end{proof}

One of the important links between the arithmetic
properties of the coefficients of a complex power
series and its analytic behaviour is given
by the P{\'o}lya--Carlson theorem~\cite{Car},~\cite{Po}, ~\cite{Segal}.

\medskip{\noindent P\'{o}lya--Carlson Theorem.}
{\it A power series with integer coefficients
and radius of convergence~$1$ is either rational or has the
unit circle as a natural boundary.}\medskip

For the proof of the main theorem of this section we
use the following key  result of Bell,  Miles and  Ward .

\begin{lem}[Lemma 17 in \cite{BMW}]\label{derivative}
Let~$S$ be a finite list of places of algebraic number fields
and, for each~$v\in S$, let~$\xi_v$ be a non-unit root in the
appropriate number field such that~$|\xi_v|_v=1$. Then the
function
\[
F(z)=\sum_{n= 1}^{\infty}f(n)z^n,
\]
where~$f(n)=\prod_{v\in S}|\xi_v^n-1|_v$ for~$n\ge1$, has the unit circle as
a natural boundary.
\end{lem}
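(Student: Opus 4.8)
The plan is to show that $F$ fails to continue analytically across a dense subset of the unit circle, namely across the primitive $p^{N}$-th roots of unity for $p$ ranging over the rational primes lying below the places in $S$ and $N$ large; together with the fact that $F$ converges for $|z|<1$ this yields the natural boundary. Note that $z=1$ is a singularity for free: since $\xi_v$ is never a root of unity we have $f(n)>0$ for all $n$, so $\sum f(n)z^{n}$ has non-negative coefficients and radius of convergence $1$, whence Pringsheim's theorem applies. The whole difficulty is the rest of the circle, where the natural substitution $z=\zeta t$ introduces the oscillating factor $\zeta^{n}$ and destroys positivity, so one must rule out cancellation of the (individually present) singularities coming from the various residue classes of $n$.

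First I would record the arithmetic shape of the coefficients. For $v\in S$ lying above the rational prime $p_v$, let $d_v\ge1$ be the multiplicative order of the reduction of $\xi_v$ in the residue field $\mathcal R_v$, so that $d_v\mid |\mathcal R_v|-1$ and in particular $\gcd(d_v,p_v)=1$; put $q_v=|\mathcal R_v|$. Since $|\xi_v|_v=1$, the ultrametric inequality gives $|\xi_v^{n}-1|_v\le1$, with equality unless $d_v\mid n$, while a standard lifting-the-exponent computation shows $|\xi_v^{n}-1|_v=q_v^{-(c_v+e_v\nu_{p_v}(n))}$ when $d_v\mid n$, for suitable integers $c_v\ge1$, $e_v\ge1$ (the prime $2$ costing only a harmless change of $c_v$), where $\nu_{p_v}$ is the $p_v$-adic valuation. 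In particular $0<f(n)\le1$ for all $n$, with $f(n)=1$ whenever $n$ is coprime to $\ell:=\operatorname{lcm}_{v\in S}d_v$, which re-confirms that the radius of convergence is $1$.

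Next I would produce a meromorphic continuation of $F$ with poles accumulating on $|z|=1$. Splitting the sum according to the type $T(n)=\{v\in S:d_v\mid n\}$, which depends only on $n\bmod\ell$, and within each type according to the valuations $\nu_{p_v}(n)$ for $v\in T$, one can use the telescoping identity $\sum_{\nu_p(m)=j}z^{m}=\dfrac{z^{p^{j}}}{1-z^{p^{j}}}-\dfrac{z^{p^{j+1}}}{1-z^{p^{j+1}}}$ for each prime $p=p_v$ occurring in $T$, together with geometric summation over the residues mod $\ell$, to rewrite each piece as a series, convergent for $|z|<1$, of rational functions whose only poles are at roots of unity of order dividing $\ell\prod_{v\in T}p_v^{j_v}$. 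Summing over types, $F$ is exhibited as such a convergent series of rational functions, and its candidate singularities are precisely the roots of unity whose order divides $\ell$ times a product of powers of the $p_v$.

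The heart of the matter, and the step I expect to be the main obstacle, is to check that no global cancellation occurs at these points. Fix $p=p_{v_0}$ and a primitive $p^{N}$-th root of unity $\zeta$ with $N>\nu_p(\ell)$. Types $T$ with $p\notin\{p_v:v\in T\}$ contribute rational functions that are regular at $\zeta$, since their poles lie among roots of unity of order not divisible by $p^{N}$. For a type $T$ with $p\in\{p_v:v\in T\}$, isolating the dependence of $f$ on $j=\nu_p(n)$ extracts a factor $Q_T^{-j}$ with $Q_T=\prod_{v\in T,\,p_v=p}q_v^{e_v}>1$, and the telescoped rational functions contribute residue $-\zeta(p-1)p^{-(j+1)}$ at $\zeta$; since all remaining weights are positive, the residue of this type's contribution at $\zeta$ is $-\zeta$ times a convergent series all of whose terms have the same sign, hence nonzero, and when summed over all relevant types the contributions again share a common sign and cannot cancel. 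Thus $F$ has a genuine singularity at $\zeta$. The technical work here is the bookkeeping when a type $T$ simultaneously involves $p$ and primes $p'\neq p$; this is cleanest to organise as an induction on $|S|$, the base case $|S|=1$ being exactly the explicit telescoping computation just described. Since the points $\zeta$ obtained this way are dense on $|z|=1$ while $F$ is holomorphic in $|z|<1$, the unit circle is a natural boundary for $F$.
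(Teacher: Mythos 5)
The paper does not prove this lemma: it is quoted as Lemma~17 of Bell--Miles--Ward \cite{BMW} and used as a black box in the proof of Theorem~\ref{Main}. The closest the paper comes to an argument of this kind is the single-prime computation following Lemma~4.1 of \cite{ESW} in the final example, which uses exactly the strategy you outline: split by $p$-adic valuation, telescope $\sum_{\nu_p(m)=j}z^m$, and locate singularities densely on the unit circle. Your structural observations about $f(n)$ --- positivity, $f(n)=1$ when $\gcd(n,\ell)=1$, the lifting-the-exponent formula $|\xi_v^n-1|_v=q_v^{-(c_v+e_v\nu_{p_v}(n))}$ for $d_v\mid n$ --- are correct, as is the Pringsheim observation at $z=1$. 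So at the level of strategy you are aligned with the literature the paper relies on.

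However, the ``no cancellation'' step, which you rightly flag as the heart of the matter, has a genuine gap. At a primitive $p^N$-th root of unity $\zeta$, the block $\frac{z^{p^j}}{1-z^{p^j}}-\frac{z^{p^{j+1}}}{1-z^{p^{j+1}}}$ is singular not only for $j\ge N$ (where your residue $-\zeta(p-1)p^{-(j+1)}$ is right) but also for $j=N-1$, where only the subtracted term has a pole, giving residue $+\zeta p^{-N}$ of the \emph{opposite} sign. Your ``convergent series all of whose terms have the same sign, hence nonzero'' is therefore false as stated; worse, in the single-place case the $j=N-1$ contribution strictly dominates the $j\ge N$ tail (since $Q_T\,p-1>p-1$), so the net sign is actually the opposite of what your formula yields. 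This can be repaired --- e.g.\ Abel summation turns $\sum_j a_j(b_j-b_{j+1})$ into $a_0b_0+\sum_{j\ge1}(a_j-a_{j-1})b_j$ with $b_j=\frac{z^{p^j}}{1-z^{p^j}}$ and genuinely one-signed weights, or, better, one estimates the radial limit $(1-r)F(r\zeta)$ as $r\to1^-$, which also avoids speaking of ``residues'' of an infinite sum of rational functions, a notion that itself needs justification --- but this repair is the lemma, not a detail. Finally, the ``induction on $|S|$'' handling interactions between several primes is only asserted: after fixing $\nu_p(n)=j$, the coefficient still varies with the type $T$ and with $\nu_{p'}(n)$ for $p'\ne p$, and a common-sign statement across all types contributing at $\zeta$ has to be proved rather than declared.
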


The main results of this section
are the following counting formulas for the Reidemeister numbers and a P\'olya--Carlson dichotomy between rationality and a natural boundary for the analytic behaviour of the Reidemeister zeta function.
We  follow the method of the proof of  Bell,  Miles and  Ward  in ~\cite[Theorem 15]{BMW} for the Artin--Masur zeta function  of compact abelian groups automorphisms.
\begin{teo}\label{Main}
Let $\phi: G \rightarrow G$  be an automorphism of a  countable  Abelian group  $G$
that is a subgroup  of~$\mathbb{Q}^d$, where $d\geqslant 1$.
Suppose that the group $G$ as  $R = \mathbb{Z}[t]$- module  satisfies the following conditions:

(1) the set of associated primes $Ass(G)$ is finite and consists entirely of non-zero
principal ideals of   the polynomial  ring $R = \mathbb{Z}[t]$,

(2) the map $g \rightarrow (t^j -1)g$  is a monomorphism of $G$ for all $j\in \mathbb{N}$\\
(equivalently, $t^j - 1 \notin \mathfrak{p}$ for all $ \mathfrak{p}\in Ass(G)$ and all $j\in
\mathbb{N}$),

(3) for each $\mathfrak{p}\in Ass(G)$, $ m(\mathfrak{p})= \dim_{\mathbb{K}(\mathfrak{p})}G_{\mathfrak{p}} < \infty$.

Then there exist
algebraic number fields~$\mathbb{K}_1,\dots,\mathbb{K}_n$, sets
of finite places~$P_i\subset \mathcal{P}(\mathbb{K}_i)$,
 $S_i=\mathcal{P}(\mathbb{K}_i)\setminus
P_i$,
 and
elements~$\xi_i\in \mathbb{K}_i$, no one of which is a root of
unity for~$i=1,\dots,n$, such that 
\begin{equation}\label{Reidemeister1}
R(\phi^j) = \prod_{i=1}^{n}\prod_{v\in P_i}
|\xi_i^j-1|_{v}^{-1} = \prod_{i=1}^{n}|\xi_i^j-1|_{P_i}^{-1}  =
\prod_{i=1}^{n}
|\xi_i^j-1|_{P_i^\infty\cup S_i}
\end{equation}
 for all $j\in \mathbb{N}$.
 
 Suppose that the last
product in~\eqref{Reidemeister1} only involves
finitely many places and that~$|\xi_i|_v\neq 1$ for all~$v$
in the set of infinite places $P_i^\infty$ of $\mathbb{K}_i$ and all $i=1,\dots,n$. \\
Then the Reidemeister zeta function $R_{\phi}(z)$ is
either rational function or has a natural boundary at its circle of
convergence, and the latter occurs if and only if~$|\xi_i|_v=1$
for some~$v\in S_i$,~$1\le i\le n$.
\end{teo}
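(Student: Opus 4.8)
The overall plan is to reduce the Reidemeister numbers to a periodic-point count, invoke the local-factor formula already available for such counts, and then dissect the resulting product so as to separate a rational contribution from one to which the natural-boundary lemma applies. First I would observe that for an abelian group, written additively, $x$ and $y$ are $\phi^n$-conjugate iff $y-x\in(\mathrm{id}-\phi^n)(G)$, so $R(\phi^n)=|G/(\mathrm{id}-\phi^n)G|=|G/(t^n-1)G|$. Conditions (1)--(3) are exactly what the proof of Proposition~\ref{main_formula1} uses: $G$ being torsion free of finite rank, $\wh G$ is a finite dimensional compact connected abelian group, and (2)--(3) make $\wh\phi$ ergodic of finite entropy; that proof therefore applies with $F_{\wh\phi}(j)=|G/(t^j-1)G|=R(\phi^j)$ and delivers the first two products in~\eqref{Reidemeister1}. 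The third expression, together with $\mathbb K_i=\mathbb Q(\xi_i)$ and $|\xi_i|_v=1$ for $v\in P_i$, then follows from the Artin product formula as in the Remark after Proposition~\ref{main_formula1}.

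\textbf{Step 2 (splitting the local factors).} Now impose the extra hypotheses, so $\bigcup_i(P_i^\infty\cup S_i)$ is finite. I would split $S_i=S_i'\sqcup S_i''$ with $S_i'=\{v\in S_i:|\xi_i|_v\neq1\}$ and $S_i''=\{v\in S_i:|\xi_i|_v=1\}$. At an archimedean place $v\in P_i^\infty$ we are given $|\xi_i|_v\neq1$, and in the normalized archimedean absolute values $\prod_{v\in P_i^\infty}|\xi_i^n-1|_v=|N_{\mathbb K_i/\mathbb Q}(\xi_i^n-1)|=|\det(I-A_i^n)|$, where $A_i$ is the companion matrix of the minimal polynomial of $\xi_i$, whose eigenvalues are the conjugates of $\xi_i$; at $v\in S_i'$ the ultrametric inequality gives $|\xi_i^n-1|_v=\max(|\xi_i|_v,1)^n$. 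Setting $A:=\bigoplus_i A_i$, $c:=\prod_i\prod_{v\in S_i',\,|\xi_i|_v>1}|\xi_i|_v$, and $N(n):=\prod_i\prod_{v\in S_i''}|\xi_i^n-1|_v$, this gives $R(\phi^n)=|\det(I-A^n)|\,c^n\,N(n)$. Since every eigenvalue of $A$ has absolute value $\neq1$, an elementary sign count in $\prod_{\mu}(1-\mu^n)$ yields $|\det(I-A^n)|=\e_1\e_2^{\,n}\det(I-A^n)$ with $\e_1=(-1)^{p+q}$ and $\e_2=(-1)^{q}$, where $p$ and $q$ are the numbers of real eigenvalues of $A$ lying in $(1,\infty)$ and in $(-\infty,-1)$; hence $R(\phi^n)=\e_1(\e_2c)^n\det(I-A^n)\,N(n)$.

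\textbf{Step 3 (the rational case).} If every $S_i''$ is empty, then $N(n)\equiv1$, so $R(\phi^n)=\e_1(\e_2c)^n\det(I-A^n)$ and therefore $R_\phi(z)=L(\e_2cz)^{\e_1}$, where $L(w)=\prod_k\det(I-w\,\Lambda^kA)^{(-1)^{k+1}}=\exp\bigl(\sum_{n\ge1}\tfrac{\det(I-A^n)}{n}w^n\bigr)$ is a rational function (compare Theorem~\ref{fingenabelian}); hence $R_\phi(z)$ is rational.

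\textbf{Step 4 (the natural-boundary case, and the main difficulty).} If some $S_i''$ is non-empty, then $N(n)=\prod_{(i,v):\,v\in S_i''}|\xi_i^n-1|_v$ is a non-empty product of exactly the shape in Lemma~\ref{derivative}, so $\mathcal N(z):=\sum_{n\ge1}N(n)z^n$ has the unit circle as a natural boundary. Put $m(n):=\e_1(\e_2c)^n\det(I-A^n)$, a linear recurrence sequence, so $\mathcal M(w):=\sum_{n\ge1}m(n)w^n$ is rational; its characteristic roots are the numbers $\e_2c\prod_{j\in T}\mu_j$ over subsets $T$ of the set of eigenvalues $\{\mu_j\}$ of $A$, and the strictly dominant one, $\nu_0:=\e_2c\prod_{|\mu_j|>1}\mu_j$, is a positive real ($\nu_0>0$ since the complex eigenvalues pair up and $\e_2$ absorbs the sign of the product of the real eigenvalues below $-1$), so $\mathcal M$ has a \emph{simple} pole at $w_0:=1/\nu_0$ while all its other poles lie in $|w|>w_0$. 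A lifting-the-exponent estimate $v(\xi_i^n-1)=O(\log n)$ at non-archimedean places gives $N(n)\ge n^{-C}$, hence $N(n)^{1/n}\to1$, so the Hadamard product $R(z):=\sum_{n\ge1}R(\phi^n)z^n=\mathcal M\odot\mathcal N$ has radius of convergence exactly $w_0$. By the Hadamard multiplication theorem the singularities of $\mathcal M\odot\mathcal N$ on the circle $|z|=w_0$ are precisely the points $w_0\beta$ with $\beta$ a singularity of $\mathcal N$ on $|z|=1$ (the remaining poles of $\mathcal M$ being too large in modulus to meet that circle, and simplicity of the dominant pole excluding cancellation); since $\mathcal N$ has a natural boundary, so does $R(z)$, and Lemma~\ref{generating} carries this over to $R_\phi(z)$. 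Steps 3 and 4 together yield the dichotomy, a natural boundary occurring exactly when $|\xi_i|_v=1$ for some $v\in S_i$. I expect Step 4 to be the crux: turning the archimedean contributions into a genuine linear recurrence with a single \emph{simple} dominant root, pinning down the radius of convergence through the subexponential lower bound on $N(n)$, and applying both directions of the Hadamard multiplication theorem carefully --- essentially following the argument of \cite[Theorem~15]{BMW}.
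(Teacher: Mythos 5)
Your proposal is correct and takes essentially the same route as the paper: Pontryagin duality identifies $R(\phi^j)$ with $F_{\widehat\phi}(j)$, Miles' local-factor formula (Proposition~\ref{main_formula1} and the Artin product formula) gives~\eqref{Reidemeister1}, the $p$-adic places with $|\xi_i|_v=1$ are split off into a factor $N(n)$ (the paper's $f(j)$) to which BMW's Lemma~\ref{derivative} applies, while the remaining archimedean and $p$-adic contributions form a finite exponential sum with a strictly dominant term, and Lemma~\ref{generating} transfers the conclusion to $R_\phi(z)$. The only cosmetic difference is that you package the archimedean part as $\det(I-A^n)$ via a companion matrix and phrase the last step as a Hadamard product $\mathcal M\odot\mathcal N$, whereas the paper expands $g(j)=\sum_I d_I w_I^j$ directly and isolates the dominant $w_J$; after writing the rational $\mathcal M$ in partial fractions these are the same computation, and your parenthetical remark (poles of $\mathcal M$ other than $w_0$ too large in modulus, simplicity of the dominant pole) is exactly what makes the Hadamard-product phrasing rigorous rather than a mere inclusion of singularities.
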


\begin{proof}

\medskip
 The  Reidemeister number of an endomorphism $\phi$ of 
 an Abelian group $G$  coincides with the cardinality of the  quotient group $ \coker(\phi-\Id_G)=G/{\rm Im}(\phi-\Id_G)$
(or $\coker(\Id_G -\phi)=G/{\rm Im}(\Id_G-\phi)$).  
A straightforward duality argument using Lemma \ref{dual} shows that
\begin{equation}\label{Pont}
 R(\phi)= |\coker(\phi-\Id_G) | = |\widehat{\coker(\phi-\Id_G)}| = |\Ker(\widehat\phi-\Id_{\widehat G}) | = |Fix(\widehat\phi) |. 
\end{equation}

If the endomorphism $\phi: G \rightarrow G$ satisfies the conditions (1) - (3),
then the Pontryagin dual endomorphism $\widehat{\phi}$ is an ergodic finite entropy
epimorphism of the  compact connected Abelian group $\wh G$ 
of the finite  dimension $d\geq 1$ i.e.  the Pontryagin dual group $\wh G$  is a solenoid(see \cite{Mi, Sch}). Hence the Reidemeister numbers $R(\phi^j)$ and the number of periodic points of the dual map $F_{\widehat{\phi}}(j)$  are finite for all $j\in \mathbb{N}$ .  By (\ref{periodic}), (\ref{main_formula2}) and (\ref{Pont}) we have
\begin{equation}\label{RP}
R(\phi^j)= F_{\widehat{\phi}}(j)= \prod_{i=1}^{n}\prod_{v\in P_i}
|\xi_i^j-1|_{v}^{-1} = \prod_{i=1}^{n}|\xi_i^j-1|_{P_i}^{-1}  =
\prod_{i=1}^{n}
|\xi_i^j-1|_{P_i^\infty\cup S_i}.
\end{equation}

Let~$S_i^*=\{v\in S_i:|\xi_i|_v\neq 1\}$, $S_i^{**}=\{v\in S_i:|\xi_i|_v> 1\}$ and let
$$
f(j)=\prod_{i=1}^n|\xi_i^j-1|_{S_i\setminus S_i^*},  \,\,\,\,\,\,
g(j)=\prod_{i=1}^n|\xi_i^j-1|_{P_i^\infty\cup S_i^*}.
$$
So,~$R(\phi^j)=f(j)g(j)$ by~(\ref{Reidemeister1}).
By the ultrametric property
$$
g(j)=\prod_{i=1}^n
|\xi_i|^j_{S_i^{**}}
\cdot|\xi_i^j-1|_{P_i^\infty}.
$$
  We can
expand the product over infinite places using an appropriate
symmetric polynomial  to obtain an expression of the form
\begin{equation}\label{dominant}
g(j)=\sum_{I\in\mathcal{I}}d_I w_I^j,
\end{equation}
where~$\mathcal{I}$ is a finite indexing set,~$d_I\in\{-1,1\}$
and~$w_I\in\mathbb{C}$.

Furthermore, by~\eqref{dominant},
\[
R_\phi(z)
=
\exp\left(\sum_{I\in\mathcal{I}}d_I
\sum_{j= 1}^{\infty}
\frac{f(j)(w_Iz)^j}{j}\right).
\]
If~$S_i\setminus S^*_i=\varnothing$ for all~$i=1,\dots,n$,
then~$f(j)\equiv 1$, and it follows immediately
that the  Reidemeister zeta function $R_\phi(z)$ is rational function.

Now suppose that~$S_i\setminus S^*_i\neq\varnothing$ for
some~$i$. As noted in
Lemma \ref{generating}, we need only exhibit
a natural boundary at the circle of convergence for
\[
\sum_{I\in\mathcal{I}}d_I \sum_{j= 1}^{\infty} f(j)(w_Iz)^j
\]
to exhibit one for~$R_\phi(z)$.
Moreover,~$\limsup_{j\rightarrow\infty}f(j)^{1/j}=1$, so for
each~$I\in\mathcal{I}$, the series
\[
\sum_{j= 1}^{\infty} f(j)(w_Iz)^j
\]
has radius of convergence~$|w_I|^{-1}$.

Since~$|\xi_i|_v\neq 1$ for all~$v\in
P_i^\infty$,~$i=1,\dots,n$, there is a dominant term~$w_J$ in
the expansion~\eqref{dominant}, for which
\[
|w_J|
=
\prod_{i=1}^{n}
|\xi_i|_{S^{**}_i}
\prod_{v\in P_i^\infty}
\max\{|\xi_i|_v,1\}
=
\prod_{i=1}^{n}
\prod_{v\in P_i^\infty\cup \mathcal{P}(\mathbb{K}_j)}
\max\{|\xi_i|_v,1\},
\]
and~$|w_J|>|w_I|$ for all~$I\neq J$ (note that~$\log|w_J|$ is
the topological entropy, as given by~\cite{LW}).

Since~$|w_J|^{-1}<|w_I|^{-1}$ for all~$I\neq J$, this means
that it suffices to show that the circle of
convergence~$|z|=|w_J|^{-1}$ is a natural boundary for~$\sum_{j= 1}^{\infty} f(j)(w_Iz)^j$. But this is the case precisely when~$\sum_{j= 1}^{\infty} f(j)z^j$ has the unit circle as a natural boundary, and this
has already been dealt with by
Lemma~\ref{derivative}.
\end{proof}

\subsection{Examples}
To give an example of irrational Reidemeister zeta function 
let us consider an endomorphism $\phi: g \rightarrow 2g$ on the module 
$\Z[\frac{1}{3}]$ which is an infinitely generated abelian group.
We follow  the method and the  calculations  of Everest, Stangoe and Ward in Lemma 4.1 in \cite{ESW} 
for the Artin--Masur zeta function of the dual compact
abelian group endomorphism $\widehat{\phi}$ .

\begin{lem}(cf. Lemma 4.1 of \cite{ESW} ) 
The Reidemeister zeta function $R_{\phi}(z)$
 has natural boundary $\vert z\vert=\frac{1}{2}$.
\end{lem}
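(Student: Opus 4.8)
The plan is to compute the Reidemeister numbers $R(\phi^n)$ explicitly, recognise the associated generating series as a rescaled copy of a series to which the Bell--Miles--Ward natural boundary lemma (Lemma~\ref{derivative}) applies, and then transfer the conclusion to $R_\phi(z)$ by means of Lemma~\ref{generating}. This realises, for the Reidemeister zeta function, the scheme of Everest--Stangoe--Ward \cite{ESW} for the Artin--Masur zeta function of the dual solenoid endomorphism $\widehat\phi$. Note that $\phi:g\mapsto 2g$ on $G=\Z[\frac{1}{3}]$ is injective but not surjective, so the example lies outside the scope of Theorem~\ref{Main}.

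First I would compute $R(\phi^n)$. As recalled in the proof of Theorem~\ref{Main}, for an endomorphism of an Abelian group one has $R(\phi^n)=|\coker(\phi^n-\Id_G)|$, and here $\phi^n-\Id_G$ is multiplication by $2^n-1$. Since $3$ is a unit in $\Z[\frac{1}{3}]$, writing $2^n-1=3^{a_n}m_n$ with $\gcd(m_n,3)=1$ gives $(2^n-1)\Z[\frac{1}{3}]=m_n\Z[\frac{1}{3}]$ and $\Z[\frac{1}{3}]/m_n\Z[\frac{1}{3}]\cong\Z/m_n\Z$, so
$$
R(\phi^n)=m_n=(2^n-1)\,|2^n-1|_3,
$$
where $|\cdot|_3$ is the $3$-adic absolute value on $\mathbb{Q}$ normalised by $|3|_3=\frac13$; the same value comes out of Lemma~\ref{dual} together with $R(\phi^n)=|\Fix(\widehat\phi^n)|$ and the identification of $\widehat{\Z[\frac{1}{3}]}$ with the $3$-adic solenoid. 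In particular every $R(\phi^n)$ is finite, so $R_\phi(z)$ is defined, and since $\bigl(|2^n-1|_3\bigr)^{1/n}\to 1$ we get $\limsup_n R(\phi^n)^{1/n}=2$; hence $R(z):=\sum_{n\ge1}R(\phi^n)z^n$ has radius of convergence $\frac12$.

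Next I would isolate the arithmetically essential factor. Put $B(w):=\sum_{n\ge1}|2^n-1|_3\,w^n$. From $R(\phi^n)=2^n|2^n-1|_3-|2^n-1|_3$ one obtains the identity of power series
$$
R(z)=B(2z)-B(z),
$$
valid on $|z|<\frac12$. The coefficients of $B$ lie in $(0,1]$ and equal $1$ for every odd $n$, so $B$ has radius of convergence $1$; moreover its coefficient sequence is precisely $f(n)=|2^n-1|_3=|\xi^n-1|_v$ for $\xi=2$ and $v$ the $3$-adic place of $\mathbb{Q}$, and $2$ is not a root of unity while $|2|_v=1$. Hence Lemma~\ref{derivative}, applied to the one-element list $S=\{v\}$, shows that $B(w)$ has the unit circle $|w|=1$ as a natural boundary.

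Finally I would glue and transfer. If $R(z)$ admitted an analytic continuation to a neighbourhood $D$ of some $z_0$ with $|z_0|=\frac12$, then, shrinking $D$ so that $D\subset\{|z|<1\}$ (possible as $\frac12<1$), the function $R(z)+B(z)$ would be holomorphic on $D$ and would agree with $B(2z)$ on $D\cap\{|z|<\frac12\}$; thus $w\mapsto B(w)$ would extend holomorphically across $w_0=2z_0$, contradicting the natural boundary of $B$ on $|w|=1$. Therefore every point of $|z|=\frac12$ is singular for $R(z)$, so $|z|=\frac12$ is a natural boundary for $R(z)=\sum_{n\ge1}R(\phi^n)z^n$ and is its circle of convergence. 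By Lemma~\ref{generating} this forces $R_\phi(z)$ to have a natural boundary at its circle of convergence, and the latter is again $|z|=\frac12$, since $R_\phi(z)=\exp\left(\sum_{n=1}^\infty\frac{R(\phi^n)}{n}z^n\right)$ is holomorphic on $|z|<\frac12$ and cannot be continued further, again by Lemma~\ref{generating}. The only points requiring attention are the verification that Lemma~\ref{derivative} applies verbatim to a single place of $\mathbb{Q}$ (so that its hypotheses reduce to ``$2$ is not a root of unity'') and the elementary patching argument promoting ``singular at each boundary point'' to ``natural boundary''; both are routine, and the substantive input is the Bell--Miles--Ward lemma.
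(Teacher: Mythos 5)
Your proof is correct, and it reaches the same conclusion by a different, more structural route. The paper's own argument is the explicit Everest--Stangoe--Ward computation: starting from $R(\phi^j)=(2^j-1)\vert 2^j-1\vert_3$, it evaluates $\vert 2^n-1\vert_3$ as $\tfrac13\vert n\vert_3$ for even $n$ and $1$ for odd $n$, organises the sum $\xi(z)=\sum_n\frac{z^n}{n}R(\phi^n)$ into a stack of logarithms $\eta_j$, and reads off a genuine infinite-product representation of $R_\phi(z)$ whose zeros at $\tfrac12 e^{2\pi ij/3^r}$ accumulate on $\vert z\vert=\tfrac12$; the natural boundary is then visible by inspection. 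You instead isolate the purely arithmetic factor $B(w)=\sum_{n\ge1}\vert 2^n-1\vert_3\,w^n$, exploit the linear identity $R(z)=B(2z)-B(z)$, invoke the Bell--Miles--Ward Lemma~\ref{derivative} (with the one-element list $S=\{v_3\}$, $\xi_v=2$, $|2|_3=1$, $2$ not a root of unity) to get the natural boundary of $B$ on $\vert w\vert=1$, and then transfer via the elementary patching argument and Lemma~\ref{generating}. What the paper's proof buys is the explicit location of the accumulating zeros of $R_\phi$ and no reliance on the BMW lemma; what your proof buys is brevity and a cleaner identification of exactly which arithmetic feature ($\vert 2\vert_3=1$ at a place in $S_i\setminus S_i^*$) is responsible for the boundary, making the example a visible instance of the general mechanism in Theorem~\ref{Main} rather than a separate calculation. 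Your opening remark that $\phi:g\mapsto 2g$ is injective but not surjective on $\Z[\tfrac13]$, so the example sits outside the automorphism hypothesis of Theorem~\ref{Main}, is a worthwhile point that the paper leaves implicit.
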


\begin{proof}
The dual compact abelian group $\widehat{\Z[\frac{1}{3}]}$ is a one dimensional solenoid.
By (\ref{periodic}), (\ref{main_formula2}) and (\ref{RP}) the Reidemeister numbers of iterations of $\phi$ and the number of periodic points of the dual map $\widehat{\phi}$  are 
$R(\phi^j) = |Fix({\widehat{\phi}} ^j )|=F_{\widehat{\phi}}(j)= |2^j-1|\cdot |2^j-1|_3  .$

Let  $\xi(z)=\sum_{n=1}^{\infty}\frac{z^n}{n}
\vert2^n-1\vert\cdot
\vert2^n-1\vert_3 $  so the Reidemeister zeta function

$R_{\phi}(z)=\exp(\xi(z))$.
Now
\begin{eqnarray*}
\xi(z)&=&\sum_{n=0}^{\infty}
\frac{z^{2n+1}}{2n+1}
(2^{2n+1}-1)+\sum_{n=1}^{\infty}
\frac{z^{2n}}{2n}(2^{2n}-1)\vert2^{2n}-1\vert_3\\
&=&\log\left(\frac{1-z}{1-2z}\right)
-{\frac{1}{2}}\log\left(
\frac{1-z^2}{1-4z^2}\right)+
\sum_{n=1}^{\infty}
\frac{z^{2n}}{2n}(2^{2n}-1)\vert2^{2n}-1\vert_3.
\end{eqnarray*}
 Notice that
\begin{eqnarray*}
\vert2^n-1\vert_3=\vert
(3-1)^n-1\vert_3=\vert3^n-n3^{n-1}+\dots+(-1)^{n-1}3n+(-1)^n-1\vert_3=\\
= \begin{cases}
\frac{1}{3}\vert n\vert_3&\mbox{if }n\mbox{ is even,}\\
1&\mbox{if }n\mbox{ is odd.}
\end{cases}
\end{eqnarray*}
In
particular,
\begin{equation}\label{exp}
\vert4^n-1\vert_3=\vert2^{2n}-1\vert_3=\textstyle\frac{1}{3}\vert2n\vert_3=\frac{1}{3}\vert
n\vert_3.
\end{equation}

Write $\frac{1}{6}\xi_1(z)$ for the last term in an
expression for $\xi(z)$ above, so by~\eqref{exp}
\begin{eqnarray*}
\xi_1(z)&=&3
\sum_{n=1}^{\infty}
\frac{z^{2n}}{n}(4^{n}-1)\vert4^{n}-1\vert_3 =
\sum_{n=1}^{\infty}
\frac{z^{2n}}{n}(4^n-1)\vert n\vert_3.
\end{eqnarray*}
Following Lemma 4.1 in \cite{ESW} we shall show that $\xi_1(z)$ has infinitely many
logarithmic singularities on the circle $|z|=\frac{1}{2}$,
each of which corresponds to a zero of the Reidemeister zeta function $R_{\phi}(z)$.

Write $3^a\Divides n$ to mean that $3^a\divides n$ but
$3^{a+1}\notdivides n$. Notice that $3^a\Divides n$ if and only if
$\vert n\vert_3=3^{-a}$. Then $\xi_1$ may be split up according to
the size of $\vert n\vert_3$ as
\begin{eqnarray*}
\xi_1(z)&=&\sum_{j=0}^{\infty}\frac{1}{3^j}\sum_{3^j\Vert n}
\frac{z^{2n}}{n}(4^n-1)=\sum_{j=0}^{\infty}\frac{1}{3^j}\eta_j^{(4)}(z),
\end{eqnarray*}
where
$
\eta_j^{(a)}(z)=\sum_{3^j\Divides n}
\frac{z^{2n}}{n}(a^n-1).
$
Then
\begin{eqnarray*}
\eta_0^{(a)}(z)&=&\sum_{3^0\Divides n}\frac{z^{2n}}{n}(a^n-1)=\sum_{n=1}^{\infty}\frac{z^{2n}}{n}(a^n-1)
-\sum_{n=1}^{\infty}\frac{z^{6n}}{3n}(a^{3n}-1)\\
&=&\log\left(\frac{1-z^2}{1-az^2}\right)-
\frac{1}{3}\log\left(\frac{1-z^6}{1-a^3z^6}\right){\!\!},
\end{eqnarray*}
\begin{equation*}
\eta_1^{(4)}(z)=\sum_{3^1\Divides n}\frac{z^{2n}}{n}(4^n-1)
=\sum_{3^0\Divides n}\frac{z^{6n}}{3n}(4^{3n}-1)
=\textstyle\frac{1}{3}\eta_0^{(4^3)}(z^3),
\end{equation*}
\begin{equation*}
\eta_2^{(4)}(z)=\textstyle\frac{1}{9}\eta_0^{(4^9)}(z^9),
\end{equation*}
and so on.
Thus
\begin{equation*}
\xi_1(z)=\log\left(\frac{1-z^2}{1-(2z)^2}\right)+
2\sum_{j=1}^{\infty}\frac{1}{9^j}\log\left(\frac{1-(2z)^{2\times3^j}}{1-z^{2\times3^j}}
\right){\!\!},
\end{equation*}
so for the Reidemeister zeta function we have
\begin{equation*}
\left\vert R_{\phi}(z)\right\vert=
\left\vert\frac{1-z}{1-2z}\right\vert\cdot
\left\vert\frac{1-(2z)^{2\vphantom{3^j}}}{1-z^2}\right\vert^{1/2}\!\!\!\!\cdot
\left\vert\frac{1-z^{2\vphantom{3^j}}}{1-(2z)^{2}}\right\vert^{1/6}\!\!\!\!\cdot
\prod_{j=1}^{\infty}\left\vert
\frac{1-(2z)^{2\times3^j}}{1-z^{2\times3^j}}\right\vert^{1/3\times9^j}.
\end{equation*}

It follows that the series defining the Reidemeister zeta function $R_{\phi}(z)$ has a zero at all
points of the form $\frac{1}{2}e^{2\pi ij/3^r}$, $r\ge1$ so $\vert
z\vert=\frac{1}{2}$ is a natural boundary for the Reidemeister zeta function $R_{\phi}(z)$.
\end{proof}

\end{document}